\newcounter{stmcounter}[section]
\newcounter{thmMaincounter}
\newtheorem{formula}{}[section]
\newtheorem{proposition}[formula]{Proposition}
\newtheorem{corollary}[formula]{Corollary}
\newtheorem{lemma}[formula]{Lemma}
\newtheorem{theorem}[formula]{Theorem}
\newtheorem{theoremM}[thmMaincounter]{Theorem}
\theoremstyle{definition}
\newtheorem{definition}[formula]{Definition}
\newtheorem{example}[formula]{Example}
\newtheorem{conjecture}[formula]{Conjecture}
\newtheorem{problem}[formula]{Problem}
\newtheorem{condition}[formula]{Condition}
\newtheorem*{condition*}{Condition}
\newtheorem{construction}[formula]{Construction}
\theoremstyle{remark}
\newtheorem*{remark}{Remark}
\newcommand{\zp}{\mathcal Z_P}
\newcommand{\zk}{\mathcal Z_K}
\newcommand{\hrk}{\mathrm{hrk}}
\newcommand{\trk}{\mathrm{trk}}
\newcommand{\Z}{\mathbb Z}
\newcommand{\R}{\mathbb R}
\newcommand{\C}{\mathbb C}
\newcommand{\Q}{\mathbb Q}
\newcommand{\F}{\mathbb F}
\newcommand{\lb}{\lbrace}
\newcommand{\rb}{\rbrace}
\DeclareMathOperator{\rk}{rk}
\DeclareMathOperator{\pt}{pt}
\DeclareMathOperator{\Tor}{Tor}
\DeclareMathOperator{\Id}{Id}
\DeclareMathOperator{\col}{colim}
\DeclareMathOperator{\hoc}{hocolim}
\DeclareMathOperator{\cat}{cat}
\DeclareMathOperator{\Hm}{Hom}
\DeclareMathOperator{\Mat}{Mat}
\DeclareMathOperator{\cone}{cone}
\DeclareMathOperator{\Top}{Top}
\DeclareMathOperator{\Inc}{Inc}
\DeclareMathOperator{\dga}{dga}
\renewcommand{\leq}{\leqslant}
\renewcommand{\geq}{\geqslant}
\begin{document}

\title[Decomposition for quotients of moment-angle complexes and its applications]{On the homotopy decomposition for the quotient of a moment-angle complex and its applications}

\author{Ivan Limonchenko}
\address[I.\,Limonchenko]{National Research University Higher School of Economics, Russian Federation}
\email{ilimonchenko@hse.ru}
\author{Grigory Solomadin}
\address[G.\,Solomadin]{National Research University Higher School of Economics, Russian Federation}
\email{grigory.solomadin@gmail.com}

\subjclass[2020]{57S12, 13F55, 55N91}
\keywords{Homotopy colimit, toric diagram, moment-angle complex, quasitoric manifold, partial quotient, Buchstaber number}

\begin{abstract}

In this paper we prove that the quotient of any real or complex moment-angle complex by any closed subgroup in the naturally acting compact torus on it is equivariantly homotopy equivalent to the homotopy colimit of a certain toric diagram. For any quotient we prove an equivariant homeomorphism generalizing the well-known Davis-Januszkiewicz construction for quasitoric manifolds and small covers. We deduce formality of the corresponding Borel construction space under the natural assumption on the group action in the complex case leading to the new description of the equivariant cohomology for the quotients by any coordinate subgroups. We prove the weak Toral Rank Conjecture for any partial quotient by the diagonal circle action. We give an explicit construction of partial quotients by circle actions having arbitrary torsion in integral cohomology.

\end{abstract}

\maketitle

\section{Introduction}

Geometry and topology of moment-angle complexes and manifolds, and of quasitoric manifolds and small covers introduced in the seminal paper~\cite{da-ja-91} is one of key points of study in toric topology. All moment-angle complexes on one side, and on other side, all quasitoric manifolds and small covers belong to the class of partial quotients being a wide family of topological spaces arising in toric topology. The term ``partial quotient'' was introduced in~\cite{bu-pa-02} for the quotient space of the complex moment-angle complex $\zk=(D^2,S^1)^K$ by any freely acting \textit{subtorus} in $T^m=(S^1)^m$, where $K$ is a simplicial complex on the vertex set $[m]=\{1,2,\ldots,m\}$. Note that in~\cite{fr-21} the quotient of the complex moment-angle complex $\zk$ by an arbitrary closed \textit{subgroup} (that is, a quasitorus) in $T^m$ acting freely on $\zk$ was called a partial quotient. In this paper a \textit{partial quotient} of the (real or complex, respectively) moment-angle complex $(D^d,S^{d-1})^K$, $d=1,2$, is the corresponding quotient by the action of any freely acting closed subgroup $H_d$ from $(G_d)^m$, where $G_1:=\Z/2\Z$ and $G_2:=S^1$ (see \cite{da-ja-91}).

The notion of a \textit{polyhedral product} introduced in \cite{BBCG} is an instance of a colimit for a certain diagram of topological spaces over a small category $\cat K$. The categorical approach to polyhedral products \cite{pa-ra-08} includes the homotopy equivalence of any moment-angle complex \cite{no-ra-05} and of any quasitoric manifold \cite{we-zi-zi-99} to the homotopy colimits of toric diagrams in the terminology of \cite{we-zi-zi-99}, as well as of general partial quotients \cite{fr-10}. This elegant approach has several applications. For example, it implies that any quasitoric manifold is a rationally formal space~\cite{pa-ra-08} and that any complex Davis-Januszkiewicz space is formal \cite{no-ra-05}.

Moment-angle complexes and quasitoric manifolds have already found numerous valuable applications in homotopy theory~\cite{BaskM,DS,L2016,L2017,L2019,BL,GL1,GL2}, cobordism theory~\cite{BPR,LLP,CLP}, hyperbolic geometry~\cite{BEMPP}, combinatorial commutative algebra~\cite{BGLV,LP}. Unlike these two particularly important families of partial quotients, geometry and topology of general partial quotients is still far from being well-understood. Several authors attacked the problem of describing cohomology rings of general partial quotients~\cite{bu-pa-99,Fr06}. However, a complete and rigorous argument giving the multiplicative structure in the cohomology ring $H^*(\zk/H;\ \Z)$ of any partial quotient was given only recently in~\cite{fr-21}. In addition, in~\cite{li-19}, \cite{fu-19} another two classes of quotients of moment-angle complexes were introduced.

In this paper we introduce a new class of pairs $(K,H_d)$, where $K$ is any simplicial complex on $m$ vertices and $H_d$ is any closed subgroup in $G_d^m$ such that the following condition holds (see Condition \ref{cond:1}).

\begin{condition*}
For any $I\subseteq J\in\cat K$ the subgroup $H_d\cap G_d^J$ maps to the subgroup $H_d\cap G_d^I$ under the natural projection $G_d^J\to G_d^I$. Or equivalently, in the following diagram there exists an upper horizontal arrow making it a commutative diagram
\[
\begin{tikzcd}
G_d^J\cap H_d \arrow{r}\arrow[hook]{d} & G_d^I \cap H_d \arrow[hook]{d}\\
G_d^J \arrow{r}  & G_d^I,
\end{tikzcd}
\]
where the lower horizontal arrow is the natural projection.
\end{condition*}

The proposed class contains all partial quotients and in addition has some quotients by non-free actions on a moment-angle complex. The first main result of this paper is given as follows (for the precise definitions see \S\ref{sec:prep}).

\begin{theoremM}\label{thmM:cohstab}
Suppose that $K$ and $H_d$ satisfy the above condition. Then one has
\[
\tilde{H}^{i}(\col\  D)=\lim\  \tilde{H}^{i}(D),\
{\lim}^j\  \tilde{H}^{i}(D)=0,\ j>0,
\]
where $D=S_d, BS_d$. In particular, $\tilde{H}^{odd}(\col\ BS_2;\ \Z)=0$.
\end{theoremM}

Notice that Theorem \ref{thmM:cohstab} generalizes the description of the Stanley-Reisner ring $\Z[K]$ as a limit. Theorem \ref{thmM:cohstab} computes the equivariant cohomology ring of a quotient, see Theorem \ref{thmM:eqcoh} and definition of $BS_d$ below. The second main result of this paper for complex  moment-angle complexes (that is, $d=2$ holds, and the corresponding index is omitted in the next theorem) is given as follows (see Theorem \ref{thm:em}).

\begin{theoremM}\label{thmM:em}
Suppose that the above condition holds for the pair $(K, H)$. Then the Eilenberg-Moore spectral sequence for the fiber inclusion to the Borel construction of the $L$-action on $\mathcal{Z}_{K}/H$ is isomorphic to
\[
\Tor^{i,j}_{H^*(BL)}(\lim H^*(BS);\Z)\Rightarrow H^{i+j} (\mathcal{Z}_K /H),
\]
where $L:=T^m/H$. It collapses at the second page. In particular, the associated graded algebra of $H^{*}(\mathcal{Z}_{K}/H)$ is isomorphic to $\Tor^{*}_{H^*(BL)}(\lim H^*(BS);\Z)$.
\end{theoremM}

The proof of Theorem \ref{thmM:em} generalizes the known proof in the case of partial quotients (for example, see \cite{no-ra-05}). The proofs of Theorems \ref{thmM:cohstab}, \ref{thmM:em} use the following structure theorems that are interesting on their own. (See Theorems \ref{thm:equivquothocolim} and \ref{thm:eqcoh}, respectively.)

\begin{theoremM}\label{thmM:quothocolim}
For any $d=1,2$, any closed subgroup $H_d$ in $G_d^m$ and any simplicial complex $K$ on $[m]$ there is the $L_d$-equivariant homotopy equivalence of spaces
\[
(D^d,S^{d-1})^{K}/H_d\simeq \hoc\ G_d^m/( G_d^I\cdot H_d),
\]
where $L_d:= G_d^m/H_d$, $H_d\cdot G_d^{I}$ is the subgroup generated by $G_d^I:=\prod_{i\in I} G_{d}$ and $H_d$ in $G_d^m$.
\end{theoremM}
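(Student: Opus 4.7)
The plan is to deduce the statement from the standard homotopy colimit description of polyhedral products (as in \cite{we-zi-zi-99,pa-ra-08}) and then commute that decomposition with the orbit-space operation $(-)/H_d$. First, recall the $G_d^m$-equivariant homotopy equivalence
\[
(D^d,S^{d-1})^K \;\simeq_{G_d^m}\; \hoc_{I\in K} \, (D^d)^I \times (S^{d-1})^{[m]\setminus I},
\]
where the diagram runs over the face category of $K$ with face inclusions. For each $I\in K$ the subspace $\{0\}^I\times (S^{d-1})^{[m]\setminus I}$ is a $G_d^m$-equivariant deformation retract of $(D^d)^I\times(S^{d-1})^{[m]\setminus I}$ (via the radial contraction in each $D^d$-factor), and as a $G_d^m$-space it is a single orbit canonically identified with $G_d^m/G_d^I$. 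These retractions are natural in the face inclusions: for $I\subseteq J$ the inclusion of products corresponds to the canonical projection $G_d^m/G_d^I\twoheadrightarrow G_d^m/G_d^J$. Hence the two diagrams are related by a $G_d^m$-equivariant natural transformation which is objectwise a homotopy equivalence, and the standard homotopy colimit machinery gives
\[
(D^d,S^{d-1})^K \;\simeq_{G_d^m}\; \hoc_{I\in K} G_d^m/G_d^I.
\]

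Next, restrict the equivariance to the closed subgroup $H_d\le G_d^m$. An $H_d$-equivariant homotopy equivalence descends to a homotopy equivalence of orbit spaces (the homotopies themselves are $H_d$-equivariant and hence pass to the quotient regardless of whether $H_d$ acts freely), so
\[
(D^d,S^{d-1})^K/H_d \;\simeq\; \bigl(\hoc_{I\in K} G_d^m/G_d^I\bigr)/H_d.
\]
The standard bar/Bousfield--Kan model of the homotopy colimit is built from $\coprod_{I_0\to\cdots\to I_n} F(I_0)\times \Delta^n$ by identifications compatible with group actions on the vertex values $F(I_0)$. Applied to $F(I):=G_d^m/G_d^I$ with the diagonal $H_d$-action this yields a natural homeomorphism
\[
\bigl(\hoc_{I\in K} G_d^m/G_d^I\bigr)/H_d \;\cong\; \hoc_{I\in K} (G_d^m/G_d^I)/H_d \;=\; \hoc_{I\in K} G_d^m/(G_d^I\cdot H_d),
\]
the last equality being the elementary identification of successive quotients.

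The main obstacle is the commutation of $\hoc$ with $(-)/H_d$ for the specific model of homotopy colimit fixed in Section~\ref{sec:prep}: for the bar model it is automatic because the $H_d$-action only affects the vertex terms $F(I)$ and the quotient functor preserves the disjoint unions, products with simplices, and identifications involved in the construction. Two points of care are (i) arranging the deformation retracts of Step~1 to be genuinely natural in the face inclusions so that the two diagrams are equivalent as diagrams, not merely objectwise, and (ii) ensuring that the non-freeness of the $H_d$-action does not obstruct descent to orbit spaces---this is handled by the $H_d$-equivariance of the homotopies. Once these points are checked, the theorem follows by assembling the three displayed equivalences.
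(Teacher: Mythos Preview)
Your route is genuinely different from the paper's, and it can be made to work, but there is a real gap in the middle step that your ``points of care'' do not quite address.

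\medskip

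\textbf{The gap.} You need the equivalence $(D^d,S^{d-1})^K \simeq_{G_d^m} \hoc_I G_d^m/G_d^I$ to be a \emph{genuine} $G_d^m$-equivariant homotopy equivalence, because only then does passing to $H_d$-orbits preserve the homotopy type (your Step~2). What you actually establish is a natural transformation $\pi\colon (D^d,S^{d-1})^I\to G_d^m/G_d^I$ that is objectwise a $G_d^m$-homotopy equivalence; the Homotopy Lemma then gives only a $G_d^m$-\emph{weak} equivalence on homotopy colimits. A $G$-weak equivalence need not descend to a weak equivalence of orbit spaces when the action is not free (think of $EG\to\pt$). Your point~(i) suggests arranging the retracts to be natural ``as diagrams'', but this cannot be done in the obvious way: while the retractions $\pi_I$ are natural, the sections $s_I\colon G_d^m/G_d^I\hookrightarrow (D^d,S^{d-1})^I$ and the radial homotopies $h_I$ are \emph{not} natural in $I$ (for $I\subset J$, the coordinates in $J\setminus I$ are scaled by $h_J$ but fixed by $h_I$). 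So you do not get a natural homotopy inverse, and hence no direct argument for a genuine equivariant homotopy equivalence on $\hoc$.

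The correct fix is to invoke the equivariant Whitehead theorem: verify that both $(D^d,S^{d-1})^K$ and $\hoc_I G_d^m/G_d^I$ carry $G_d^m$-CW structures, and that the diagram $(D^d,S^{d-1})^I$ is Reedy cofibrant in $(G_d^m\text{-}\Top)^{\cat K}$ so that the Projection and Homotopy Lemmas apply equivariantly. This is exactly the content of the paper's Section~\ref{sec:gcw} (Proposition~\ref{pr:quotdiag} and Theorem~\ref{thm:equivquothocolim}), which is developed \emph{after} the non-equivariant Theorem~\ref{thm:quothocolim}.

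\medskip

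\textbf{Comparison with the paper.} The paper deliberately avoids the equivariant subtlety by quotienting first: it shows that the diagram $(D^d,S^{d-1})^I/H_d$ is cofibrant with colimit $(D^d,S^{d-1})^K/H_d$, then constructs a natural transformation $\widetilde\pi_I\colon (D^d,S^{d-1})^I/H_d\to G_d^m/(G_d^I\cdot H_d)$ which is objectwise a trivial fiber bundle with contractible fiber $(D^d)^I/(H_d\cap G_d^I)$ (Propositions~\ref{pr:fbindiag} and~\ref{pr:fibercontr}), and finishes with the non-equivariant Projection and Homotopy Lemmas. The price is the fiber-bundle analysis of Proposition~\ref{pr:fbindiag}. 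Your approach is more conceptual---the commutation $(\hoc D)/H_d\cong\hoc(D/H_d)$ for the bar model is indeed automatic and clean---but it front-loads the equivariant model-category work that the paper defers to Section~\ref{sec:gcw}.
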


\begin{theoremM}\label{thmM:eqcoh}
There is the following homotopy equivalence
\[
EL_d\times_{L_d} (D^d,S^{d-1})^K/H_d\simeq \col B(G_d^I/(G_d^I\cap H_d)),
\]
for the Borel construction of the $L_d$-action on the quotient $(D^d,S^{d-1})^K/H_d$ of the moment-angle complex.
\end{theoremM}

The proof of Theorem \ref{thmM:quothocolim} is given in \S\ref{sec:prep}. Theorem \ref{thmM:quothocolim} was proved in a series of particular cases in~\cite{we-zi-zi-99}, \cite{pa-ra-vo-04}, \cite{fr-10}. In the particular case of a partial quotient the homotopy equivalence from Theorem \ref{thmM:quothocolim} gives rise to an equivariant homeomorphism, where the standard realization of a homotopy colimit is used (Corollary \ref{cor:djcon}). This result leads to the explicit $L_d$-CW-approximation for quotients of moment-angle complexes (Proposition \ref{pr:quotdiag}), where $L_d:=G_d^m/H_d$. The last result generalizes (see Corollary \ref{cor:djcon}) the well-known Davis-Januszkiewicz construction \cite{da-ja-91} to the case of arbitrary quotients. We also indicate the closely related general constructions for partial quotients from \cite{fr-10}, \cite{ba-be-co-hi-17}.

In order to prove Theorem \ref{thmM:eqcoh} we construct the homotopy equivalence between the diagram for the Borel construction of the $L_d$-action on $(D^d,S^{d-1})^K/H_d$ and $B(G_d^I/(G_d^I\cap H_d))$. For $d=2$ we prove formality of the respective Borel construction (Theorem \ref{thm:form}). Our proof uses a similar argument to \cite{no-ra-05}. The third and final main result of this paper is as follows.

\begin{theoremM}\label{thmM:torsion}
Let $G$ be any finitely generated Abelian group. Then there exist a simple polytope $P\subseteq\R^n$ with $m$ facets and a one-dimensional subtorus (a circle) $H\subseteq T^m$ such that $H$ acts freely on the moment-angle manifold $\zp$ and $H^*(\zp/H)$ contains $G$ as a direct summand.
\end{theoremM}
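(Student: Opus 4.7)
My plan is to build the polytope $P$ and the circle subgroup $H$ in three steps: first, construct a simple polytope $P$ whose moment-angle manifold $\zp$ already has $G$ as a direct summand of its integral cohomology; second, exhibit a circle $H\subseteq T^m$ acting freely on $\zp$; third, transfer the summand $G$ from $H^*(\zp)$ to $H^*(\zp/H)$ through the Gysin sequence of the principal $S^1$-bundle $\zp \to \zp/H$. Using the structure theorem $G \cong \Z^r \oplus \bigoplus_{i=1}^s \Z/n_i\Z$, it suffices to realize each cyclic summand separately, since the Baskakov--Hochster decomposition of $H^*(\zp)$ is well-behaved under products of simplicial structures.

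For the first step I would use the Baskakov--Hochster-type decomposition
\[
H^*(\zp;\Z) \cong \bigoplus_{I \subseteq [m]} \tilde{H}^{*}(K_{P,I};\Z),
\]
taken with the standard degree shift, where $K_{P,I}$ denotes the full subcomplex of $K_P = \partial P^*$ on $I\subseteq [m]$. Given any finite simplicial complex $L$ whose reduced integral cohomology contains $\Z/n\Z$ as a direct summand, one can (after appropriate subdivision and joins with auxiliary simplices) realize $L$ as a full subcomplex of a simplicial sphere triangulating the boundary of some simplicial polytope $P^*$. The dual $P$ is the required simple polytope, and the construction may be arranged so that the Buchstaber number of $P$ is at least one, which is the next ingredient.

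A circle $H = \{(z^{a_1}, \ldots, z^{a_m}) : z \in S^1\} \subseteq T^m$ acts freely on $\zp$ exactly when $\gcd\{a_i : i \notin \sigma\} = 1$ for every simplex $\sigma \in K_P$. This finite list of gcd conditions is satisfied by a generic integer weight vector, for instance by choosing the $a_i$ to be pairwise distinct primes larger than $m$. Given such $H$, the Gysin sequence
\[
\cdots \to H^{k-2}(\zp/H) \xrightarrow{\smile e} H^k(\zp/H) \xrightarrow{\pi^*} H^k(\zp) \xrightarrow{\int_F} H^{k-1}(\zp/H) \to \cdots
\]
transports the $\Z/n\Z$ summand of $H^k(\zp)$ into either $H^k(\zp/H)$ as a preimage under $\pi^*$, or $H^{k-1}(\zp/H)$ through the integration-along-the-fiber map.

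The hard part will be verifying that $\Z/n\Z$ survives as a genuine direct summand of $H^*(\zp/H)$ rather than merely as a subquotient absorbed by nontrivial extensions in the Gysin sequence. To handle this, I would arrange the construction so that the prescribed torsion is concentrated in a single cohomological degree, surrounded by torsion-free cohomology above and below, which forces the relevant portion of the Gysin sequence to split over $\Z$. As an independent cross-check, Theorem \ref{thmM:eqcoh} combined with the Serre spectral sequence of the Borel fibration of $\zp/H$ over $BL_d$ gives a direct combinatorial handle on the integral cohomology of $\zp/H$ through the colimit of classifying spaces appearing there.
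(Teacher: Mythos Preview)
Your approach diverges from the paper's, and the Gysin step has a real gap. The paper never passes through $H^*(\zp)$: it takes a Moore space for $G$, triangulates it as $K'$ (after a stellar subdivision ensuring two non-adjacent vertices $i,j$), realizes $K'$ as a full subcomplex of a polytopal sphere $\tilde K=\partial P^*$ via the Bahri--Meersseman cutting construction, and then takes $H$ to be the specific circle coming from Li's partition map $\lambda_\alpha$ whose single two-element block is $\{i,j\}$. For that class of circles, \cite{li-19} supplies a Hochster-type decomposition of $H^*(\zp/H)$ \emph{directly}, and one summand is $\tilde H^*(K')$, which contains $G$. No transfer from $H^*(\zp)$ is required.

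Your proposed fix---concentrating the torsion of $H^*(\zp)$ in one degree with torsion-free neighbours---does not force survival in $H^*(\zp/H)$. The circle bundle $L(n,1)\to S^2$ already shows the failure: $H^*(L(n,1);\Z)$ has its $\Z/n$ isolated in degree $2$, with free groups in degrees $1$ and $3$, yet the base $S^2$ is torsion-free. The point is that your hypothesis constrains $H^*(\zp)$, while the Euler-class map in the Gysin sequence runs between two copies of $H^*(\zp/H)$; a map $\Z\to\Z$ of multiplication by $n$ between free groups produces exactly the $\Z/n$ you see upstairs without any torsion downstairs. Choosing the weights $a_i$ to be distinct large primes secures freeness of the action but gives you no control over the Euler class, so nothing prevents this phenomenon. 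To rescue the argument you must tie the choice of $H$ to the combinatorics of the full subcomplex carrying $G$---which is precisely what the paper does by invoking Li's formula for a carefully chosen, not generic, circle.
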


We prove Theorem \ref{thmM:torsion} by using the Hochster type formula from~\cite{li-19}. Furthermore, we show that the weak Toral Rank Conjecture holds for the class of partial quotients for moment-angle complexes by the action of the diagonal circle action. We finish the paper by a list of some related open problems.

\section{Homotopy decomposition for quotients of moment-angle complexes}\label{sec:prep}

Unless explicitly stated otherwise, in this paper the cohomology groups of a topological space are the singular cohomology groups with integral coefficients. Given a simplicial complex $K$ on the vertex set $[m]:=\lb 1,2,\dots,m\rb$, the objects of $K$ together with the initial object (an empty set) form a small category $\cat K$ with the arrows induced by the natural inclusions of subsets from $[m]$. The formula $I\in\cat K$ means that $I$ is an object of $\cat K$, that is, either $I=\varnothing$ or $I\in K$ holds.

\subsection{Algebraic preparation and definitions of some diagrams}

The proof of the following lemma is straight-forward.

\begin{lemma}\label{lm:stupid}
Consider the following commutative diagram of abelian group homomorphisms, where each of $a$, $a'$, $c$, $c'$ is mono:
\[
\begin{tikzcd}[sep=.3cm]
& A' \arrow{rr}{a'}\arrow{dd} && B' \arrow{dd}\\
 A\arrow[crossing over]{rr}{\qquad a}\arrow{dd}\arrow{ru} && B\arrow{dd}\arrow{ru} &\\
& C' \arrow{rr}{\qquad c'} && D'.\\
C \arrow{rr}{c}\arrow{ru} && D\arrow[crossing over, from=uu]\arrow{ru} &
\end{tikzcd}
\]
Then there is the following commutative diagram of group homomorphisms, where any row is exact:
\[
\begin{tikzcd}[sep=.3cm]
& 1\arrow{rr} && A'  \arrow{rr}\arrow{dd} && B' \arrow{dd}\arrow{rr} && B'/A'\arrow{dd}\arrow{rr} && 1\\
1\arrow{rr} && A\arrow[crossing over]{rr}\arrow{dd}\arrow{ru} && B\arrow{dd}\arrow{ru}\arrow[crossing over]{rr} && B/A\arrow{dd}\arrow{ru}\arrow[crossing over]{rr}&& 1 &\\
& 1\arrow{rr} && C' \arrow{rr} && D'\arrow{rr} && D'/C' \arrow{rr} && 1.\\
1\arrow{rr} && C \arrow[crossing over, from=uu]\arrow{rr}\arrow{ru} && D\arrow[crossing over, from=uu]\arrow{ru}\arrow{rr} && D/C\arrow[crossing over, from=uu]\arrow{ru}\arrow{rr} && 1 &
\end{tikzcd}
\]
\end{lemma}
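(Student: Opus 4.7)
The plan is to obtain the new three-dimensional diagram simply by applying the cokernel functor to the four parallel monomorphisms $a,a',c,c'$ and checking that everything glues together canonically.

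First, since each of $a, a', c, c'$ is mono, one has short exact sequences
\[
1\to A\xrightarrow{a} B\to B/A\to 1,\qquad 1\to A'\xrightarrow{a'} B'\to B'/A'\to 1,
\]
\[
1\to C\xrightarrow{c} D\to D/C\to 1,\qquad 1\to C'\xrightarrow{c'} D'\to D'/C'\to 1,
\]
which supply the four rows of the enlarged diagram as well as the four surjections that produce its new, rightmost column of cubes. This identifies the quotient objects and the horizontal exactness at once.

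Next I would construct the four remaining arrows between the cokernels (two vertical, two diagonal). Each of them is forced: the square
\[
\begin{tikzcd}[sep=.4cm]
A\arrow{r}{a}\arrow{d} & B\arrow{d}\\
C\arrow{r}{c} & D
\end{tikzcd}
\]
commutes in the original cube, so the composition $B\to D\to D/C$ kills $a(A)$ and factors uniquely through $B\to B/A$, giving $B/A\to D/C$. The same argument applied to the back face yields $B'/A'\to D'/C'$, and applied to the two faces relating the primed and unprimed groups yields the diagonal arrows $B/A\to B'/A'$ and $D/C\to D'/C'$. This is precisely functoriality of the cokernel for morphisms between monomorphisms.

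Finally, commutativity of every new square follows formally: by the universal property of the projection to a cokernel, two maps out of $B/A$ (or $B'/A'$) agree iff their pullbacks to $B$ (or $B'$) agree, and the latter is immediate from commutativity of the original cube. Exactness of each row is built in. The only thing that needs care is bookkeeping of the arrows, but this is routine and there is no genuine obstacle; everything is a direct consequence of the functoriality of the cokernel applied to the four parallel monomorphisms.
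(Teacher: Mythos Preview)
Your argument is correct and is exactly the standard one: apply the cokernel functor to the four parallel monomorphisms, use the universal property of the quotient to produce the induced maps and to verify commutativity of the new squares. The paper itself omits the proof as ``straight-forward,'' so your write-up matches the intended approach.
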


Following \cite{da-ja-91}, we make use of the notation:
\[
G_{d}:=
\begin{cases}
\Z/2\Z,\ d=1,\\
S^1,\ d=2,
\end{cases}
\F_{d}:=
\begin{cases}
\R,\ d=1,\\
\C,\ d=2,
\end{cases}
R_{d}:=
\begin{cases}
\Z/2\Z,\ d=1,\\
\Z,\ d=2.
\end{cases}
\]
We call any subgroup isomorphic to $G_d^m$ for some $m\geq 0$ a \textit{real torus} for $d=1$, and a \textit{complex torus} for $d=2$, respectively. We call both a real and a complex torus a \textit{torus}.

Throughout the paper we use the standard formalism of polyhedral products, see \cite{bu-pa-15}. Let $H_d$ be any closed subgroup in $G_d^m$. By a slight abuse of the notation we identify the group $G_d^I:=\prod_{i\in I} G_{d}$ with the isomorphic coordinate subgroup $(G_d,1)^I:=\prod_{i\in I} (G_{d})_i\times \prod_{j\in[m]\setminus I} 1_j$ in $G_d^m$, $I\subseteq [m]$. In particular, by the definition one has $G_d^{\varnothing}:=\lb 1\rb$. Denote by $\varphi_{I}\colon G_d^m\to G_d^m/G_d^I$ the natural quotient epimorphism of groups for any $I\subseteq [m]$.

\begin{proposition}\label{pr:bigdiag}
For any $I\subseteq J\in\cat K$ the following diagram
\begin{equation}\label{eq:maincommdiag}
\begin{tikzcd}
1 \arrow{r} & G_d^{I}/(G_d^{I}\cap H_d) \arrow{r}\arrow{d} & G_d^m / H_d \arrow{r}\arrow[equal]{d} & G_d^{m}/(H_d\cdot G_d^{I}) \arrow{r}\arrow{d} & 1\\
1 \arrow{r} & G_d^{J}/(G_d^{J}\cap H_d) \arrow{r} & G_d^m / H_d \arrow{r} & G_d^{m}/(H_d\cdot G_d^{J}) \arrow{r} & 1,
\end{tikzcd}
\end{equation}
is commutative and has exact rows, where $H_d\cdot G_d^{I}$ denotes the subgroup in $G_d^m$ generated by $H_d$ and $G_d^I$.
\end{proposition}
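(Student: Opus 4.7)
The plan is to derive each row from the second isomorphism theorem in the abelian group $G_d^m$ and then verify commutativity of the two squares by a direct coset chase.

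For the top row, I would apply the second isomorphism theorem to the pair of subgroups $H_d,\,G_d^I\subseteq G_d^m$, yielding the canonical isomorphism $G_d^I/(G_d^I\cap H_d)\cong (H_d\cdot G_d^I)/H_d$, and combine it with the standard short exact sequence associated with the chain $H_d\subseteq H_d\cdot G_d^I\subseteq G_d^m$, namely
\[
1\to (H_d\cdot G_d^I)/H_d\to G_d^m/H_d\to G_d^m/(H_d\cdot G_d^I)\to 1.
\]
This produces the asserted top row, and the bottom row is obtained identically with $J$ in place of $I$. The inclusion $I\subseteq J$ gives $G_d^I\subseteq G_d^J$, and hence $G_d^I\cap H_d\subseteq G_d^J\cap H_d$ and $H_d\cdot G_d^I\subseteq H_d\cdot G_d^J$; each of these induces the corresponding vertical arrow in~\eqref{eq:maincommdiag}, and commutativity follows by tracking a single coset: the left square sends $x(G_d^I\cap H_d)$ to $xH_d$ along either path, and the right square sends $xH_d$ to $x(H_d\cdot G_d^J)$ along either path.

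A more structural packaging is to invoke Lemma~\ref{lm:stupid} on the cube with $A=G_d^I\cap H_d$, $A'=G_d^J\cap H_d$, $B=G_d^I$, $B'=G_d^J$, $C=C'=H_d$, $D=D'=G_d^m$, all horizontal and vertical maps being the natural inclusions; after identifying the $B/A$- and $B'/A'$-terms with their images in $D/C=G_d^m/H_d$ via the second isomorphism theorem, the conclusion of the lemma specializes to~\eqref{eq:maincommdiag}. Since $G_d^m$ is abelian, no substantive obstruction arises; the only point requiring care is to thread the second-isomorphism identifications and the resulting cokernel identification $(G_d^m/H_d)/\bigl((H_d\cdot G_d^I)/H_d\bigr)\cong G_d^m/(H_d\cdot G_d^I)$ consistently through the diagram.
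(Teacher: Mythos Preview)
Your proof is correct; the direct appeal to the second isomorphism theorem together with the explicit coset check is entirely sufficient for the statement.

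The paper takes a somewhat different route. It applies Lemma~\ref{lm:stupid} starting from the cube with $A=G_d^I\cap H_d$, $B=H_d$, $C=G_d^I$, $D=G_d^m$ (note: your $B$ and $C$ are swapped relative to this), and then iterates the lemma in the remaining directions to assemble a full $3\times 3\times 2$ commutative diagram~\eqref{eq:bigdiag} with exact rows and columns; \eqref{eq:maincommdiag} is then read off as the bottom face. This has the advantage of exhibiting all intermediate quotients $\varphi_I(H_d)$, $G_d^m/G_d^I$, etc., and their compatibilities simultaneously, at the cost of more bookkeeping (the bottom-right cube can be filled in two ways, and one must check they agree). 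Your direct argument is lighter and tailored exactly to what the proposition asserts.

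One small remark on your second paragraph: a single invocation of Lemma~\ref{lm:stupid} on your chosen cube only produces the right-hand face $B/A\to D/C$, $B'/A'\to D'/C'$, which is the \emph{left} square of~\eqref{eq:maincommdiag}. Obtaining the right square with its cokernels $G_d^m/(H_d\cdot G_d^I)$ still requires the further quotient step you flag at the end---so your ``structural packaging'' is effectively one iteration short of the paper's full construction, and at that point your first-paragraph argument has already done the job more economically.
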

\begin{proof}
There is the following short exact sequence of groups
\begin{equation}\label{eq:simpquot}
\begin{tikzcd}[sep=.9cm]
1 \arrow{r} & G_d^I\cap H_d \arrow{r} & H_d  \arrow{r}{\varphi_{I}|_{H_d}} & \varphi_{I}(H_d) \arrow{r} & 1,
\end{tikzcd}
\end{equation}
so that $\varphi_{I}(H_d)\subseteq G^m_d/G^{I}_d$ holds. Clearly, \eqref{eq:simpquot} is functorial with respect to $I\in\cat K$ (that is, the upper face of the diagram \eqref{eq:bigdiag} below is commutative). One has an isomorphism
\[
(G_d^m/G_d^{I})/\varphi_{I}(H_d)\cong G_d^{m}/(H_d\cdot G_d^{I}).
\]
Then one constructs three out of four cubes of the following commutative diagram with exact rows and columns by applying Lemma \ref{lm:stupid} to the upper left cube:
\begin{equation}\label{eq:bigdiag}
\begin{tikzcd}[sep=.05cm]
    & & & 1 \arrow{dd} & & 1 \arrow{dd} & & 1 \arrow{dd} & & \\
& & 1 & & 1 & & 1 & & &\\
    & 1 \arrow{rr} & & G_d^{J}\cap H_d \arrow{rr}\arrow{dd} & & H_d \arrow{rr}\arrow{dd} & & \varphi_{J} (H_d) \arrow{rr}\arrow{dd} & & 1\\
1 \arrow{rr} & & G_d^{I}\cap H_d \arrow{ru}\arrow[from=uu, crossing over]\arrow[crossing over]{rr} & & H_d \arrow[equal]{ru}\arrow[from=uu, crossing over]\arrow[crossing over]{rr} & & \varphi_{I} (H_d) \arrow{ru}\arrow[from=uu, crossing over]\arrow[crossing over]{rr} & & 1 &\\
    & 1 \arrow{rr} & & G_d^{J} \arrow{rr}\arrow{dd} & & G_d^m \arrow{rr}\arrow{dd} & & G_d^m/G_d^{J} \arrow{rr}\arrow{dd} & & 1\\
1 \arrow{rr} & & G_d^{I} \arrow{ru}\arrow[from=uu, crossing over]\arrow[crossing over]{rr} & & G_d^m \arrow[equal]{ru}\arrow[from=uu, crossing over]\arrow[crossing over]{rr} & & G_d^m/G_d^{I} \arrow{ru}\arrow[from=uu, crossing over]\arrow[crossing over]{rr} & & 1 &\\
    & 1 \arrow{rr} & & G_d^{J}/(G_d^{J}\cap H_d) \arrow{rr}\arrow{dd} & & G_d^m / H_d \arrow{rr}\arrow{dd} & & G_d^m/(H_d\cdot G_d^{J}) \arrow{rr}\arrow{dd} & & 1.\\
1 \arrow{rr} & & G_d^{I}/(G_d^{I}\cap H_d) \arrow{ru}\arrow[from=uu, crossing over]\arrow[crossing over]{rr} & & G_d^m / H_d \arrow[equal]{ru}\arrow[from=uu, crossing over]\arrow[crossing over]{rr} & & G_d^m/(H_d\cdot G_d^{I}) \arrow{ru}\arrow[from=uu, crossing over]\arrow[crossing over]{rr} & & 1 &\\
    & & & 1 & & 1 & & 1 & & \\
& & 1 \arrow[from=uu, crossing over] & & 1 \arrow[from=uu, crossing over] & & 1 \arrow[from=uu, crossing over] & & &
\end{tikzcd}
\end{equation}
There are two different ways to define the right bottom cube of this diagram by applying Lemma \ref{lm:stupid}. A simple check verifies that these two cubes coincide. So the above diagram is well-defined and commutative. The necessary diagram \eqref{eq:maincommdiag} is then given by restricting \eqref{eq:bigdiag} to the lowest face.
\end{proof}

\begin{definition}\label{defn:diags}
Define $S_d$, $\kappa (G_d^{m}/H_d)$, $Q_d$ to be the $(\cat K)$-diagrams of topological spaces such that the arrow corresponding to $I\subseteq J\in \cat K$ is given by left, central and right-most columns of the diagram \eqref{eq:maincommdiag}, respectively, where $\kappa(G_d^{m}/H_d)$ is the constant $(\cat K)$-diagram corresponding to $G_d^{m}/H_d$.
\end{definition}

In what follows we write $Q_d=Q_d(K,H)$ to indicate dependency of the diagram $Q_d$ on $K$ and $H_d$ and use the similar notation for $S_d$ and $\kappa (G_d^{m}/H_d)$ if it is necessary.

\begin{remark}
Let $D\in \Top^C$ be any diagram over a small category $C$ with values in the category $\Top$ of topological spaces. Suppose that any object of $D$ is a torus and any its arrow is a group homomorphism. Then $D$ is called a \textit{toric diagram} (\cite{we-zi-zi-99}). The diagrams $S_d$, $\kappa (G_d^{m}/H_d)$, $Q_d$ are toric diagrams.
\end{remark}

\begin{corollary}\label{cor:seqdiag}
There is the following sequence of $(\cat K)$-diagram morphisms
\begin{equation}\label{eq:chaindiag}
\begin{tikzcd}
\kappa(1) \arrow{r} & S_d \arrow{r} & \kappa(G_d^{m}/H_d) \arrow{r} & Q_d \arrow{r} & \kappa(1),
\end{tikzcd}
\end{equation}
given by \eqref{eq:maincommdiag}. Objectwise \eqref{eq:chaindiag} is a short exact sequence of groups. The diagram $S_d$ is cofibrant.
\end{corollary}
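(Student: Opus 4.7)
The plan is to read off the sequence \eqref{eq:chaindiag} directly from the bottom floor of the commutative diagram \eqref{eq:bigdiag}. For each fixed $I \in \cat K$ the four columns of \eqref{eq:maincommdiag}, framed by constant $1$'s, are precisely the objectwise sequence $1 \to S_d(I) \to G_d^m/H_d \to Q_d(I) \to 1$, which is exact by Proposition \ref{pr:bigdiag}. The commutativity of \eqref{eq:maincommdiag} with respect to $I \subseteq J$ is exactly the statement that these columns assemble into morphisms of $\cat K$-diagrams, so both the existence of the sequence and its objectwise exactness follow at once; the outer entries $\kappa(1)$ record the identity $1$'s that begin and end each row.

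For cofibrancy of $S_d$ I would verify the standard latching criterion for the projective model structure on $\Top^{\cat K}$: for each $I \in \cat K$, the map $L_I S_d := \col_{J \subsetneq I,\, J \in \cat K} S_d(J) \to S_d(I)$ is a closed cofibration in $\Top$. The case $I = \varnothing$ is trivial, since $L_\varnothing S_d = \varnothing$ and $S_d(\varnothing) = 1$. For $I \neq \varnothing$, every structure map $S_d(J) \to S_d(J')$ with $J \subseteq J' \subseteq I$ is an injective continuous homomorphism between compact Lie groups (by the front-left face of \eqref{eq:bigdiag}), hence a closed embedding. Inductively one identifies $L_I S_d$ with the closed subspace $\bigcup_{J \subsetneq I} S_d(J) \subseteq S_d(I)$, and one then uses an invariant tubular neighborhood, available for closed subgroups of compact Lie groups through the exponential map, to witness the cofibration property.

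The first two claims are essentially a translation of Proposition \ref{pr:bigdiag}. The main obstacle is the cofibrancy of $S_d$: the technical crux is verifying that the latching colimits $L_I S_d$ actually embed as closed subspaces of $S_d(I)$ carrying a collared neighborhood, so that the abstract categorical colimit agrees with the naive union of images with its induced topology. This exploits the compatibility of the subgroups $G_d^J \cap H_d$ as $J$ varies across subsets of $I$ (encoded precisely by the upper-left cube of \eqref{eq:bigdiag}) and the homogeneity of compact Lie group quotients to reduce the construction of a collar to a local model at the identity.
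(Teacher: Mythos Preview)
Your handling of the first two claims matches the paper: both simply read the objectwise short exact sequence and its naturality in $I$ off the lower floor of diagram \eqref{eq:bigdiag}, and the paper records this as ``the remaining claims are clear.''

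For cofibrancy the two approaches diverge. The paper's proof is a single line: each $S_d(I\to J)$ has trivial kernel, hence is a closed embedding of compact Lie groups, so $S_d$ is cofibrant by \cite[Lemma~4.10]{we-zi-zi-99}. Your route through the latching criterion is more explicit but contains a genuine gap. The step ``inductively one identifies $L_I S_d$ with the closed subspace $\bigcup_{J \subsetneq I} S_d(J) \subseteq S_d(I)$'' presupposes that the colimit defining $L_I S_d$ injects into $S_d(I)$, which in turn requires $S_d(J)\cap S_d(J')=S_d(J\cap J')$ inside $S_d(I)$ for all $J,J'\subsetneq I$. This can fail: take $d=2$, $m=2$, $K=\Delta^1$ the full simplex on $[2]$, and $H_d=S^1_d$ the diagonal circle in $T^2$. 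Then $G_d^{\{i\}}\cap H_d=1$, so $S_d(\{1\})\cong S_d(\{2\})\cong S^1$, while $S_d(\{1,2\})=T^2/S^1_d\cong S^1$, and each structure map $S_d(\{i\})\to S_d(\{1,2\})$ is an isomorphism onto. The latching map at $\{1,2\}$ is therefore the fold $S^1\vee S^1\to S^1$, which is not injective, so no tubular-neighborhood or collar argument can exhibit it as a cofibration. Your inductive identification of $L_I S_d$ with a subspace of $S_d(I)$ thus breaks down precisely when $H_d$ meets some $G_d^I$, $I\in K$, nontrivially; the argument as written only goes through in the free-action case $G_d^I\cap H_d=1$ for all $I\in K$. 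The paper's invocation of \cite[Lemma~4.10]{we-zi-zi-99} operates at the level of the individual structure maps and does not unpack the latching colimit.
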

\begin{proof}
The homomorphism $S_d(I\to J)\colon G_d^{I}/(G_d^{I}\cap H_d) \to G_d^{J}/(G_d^{J}\cap H_d)$ has a trivial kernel for any $I\subset J\in K$, so $S_d$ is cofibrant by \cite[Lemma 4.10, p.134]{we-zi-zi-99}. The remaining claims are clear.
\end{proof}

We will make use of the next basic properties of tori.

\begin{proposition}\label{pr:tori}
$(i)$ Any closed subgroup $H_d$ of the torus $G_d^m$ is a quasitorus, that is, it is isomorphic to the direct product of a finite abelian group and of a compact complex torus;

$(ii)$ For any closed subgroup $H_d$ of the torus $G_d^m$ the natural exact sequence of groups
\[
\begin{tikzcd}
1\arrow{r} & H_d \arrow{r} & G_d^m \arrow{r} & G_d^m /H_d \arrow{r} & 1,
\end{tikzcd}
\]
splits iff $d=1$ or $H_d$ is connected;

$(iii)$ Quotient of a torus by any its closed subgroup is isomorphic to a torus.
\end{proposition}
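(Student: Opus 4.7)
The plan is to dispose of the case $d=1$ by elementary linear algebra, and then to reduce the case $d=2$ to the structure theorem for finitely generated abelian groups via Pontryagin duality.

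For $d=1$ the group $G_1^m=(\Z/2\Z)^m$ is a finite-dimensional vector space over $\F_2$. Then $H_1$ is itself such a vector space, so (i) is immediate with trivial torus factor; the quotient $G_1^m/H_1$ is again an $\F_2$-vector space, so (iii) holds; and every short exact sequence of vector spaces over a field splits, so (ii) holds for $d=1$ as well.

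For $d=2$, I would apply Pontryagin duality $X\mapsto\widehat X$, which is an anti-equivalence between compact Hausdorff abelian groups and discrete abelian groups, sends $T^m$ to $\Z^m$, exchanges closed subgroups with quotient groups, and preserves exactness and splittings. Under it, $\widehat{H_2}$ becomes a quotient of $\Z^m$, hence a finitely generated abelian group; the structure theorem gives $\widehat{H_2}\cong \Z^k\oplus F$ for some finite abelian group $F$. Dualizing back yields $H_2\cong T^k\times F$ (using that any finite abelian group is canonically self-dual), which proves (i). Similarly $\widehat{T^m/H_2}$ is the kernel of the epimorphism $\Z^m\twoheadrightarrow \widehat{H_2}$, hence a subgroup of $\Z^m$, and therefore free of some rank $l\leq m$; whence $T^m/H_2\cong T^l$, proving (iii).

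For (ii), the sequence splits if and only if its Pontryagin dual
\[
0 \to \widehat{T^m/H_2} \to \Z^m \to \widehat{H_2} \to 0
\]
splits, and this happens if and only if $\widehat{H_2}$ is projective as a $\Z$-module, equivalently free, equivalently torsion-free. The only step requiring genuine care is to identify torsion-freeness of $\widehat{H_2}$ with connectedness of $H_2$: the identity component of $H_2$ corresponds under duality to the free summand $\Z^k\cong \widehat{H_2}/F$ of the character group, so $H_2$ is connected if and only if $F=0$, which in turn is equivalent to $\widehat{H_2}$ being torsion-free. This completes the argument.
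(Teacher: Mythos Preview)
Your argument is correct. The paper states this proposition as a basic fact about tori and does not supply a proof, so there is nothing to compare your approach against; your use of elementary $\F_2$-linear algebra for $d=1$ and Pontryagin duality plus the structure theorem for finitely generated abelian groups for $d=2$ is a standard and complete way to justify all three claims.

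One small inaccuracy worth flagging: you write that ``any finite abelian group is canonically self-dual.'' This is not quite right---the Pontryagin dual $\widehat{F}=\Hm(F,S^1)$ of a finite abelian group $F$ is isomorphic to $F$, but only non-canonically (one must choose a primitive $n$-th root of unity for each cyclic factor). This does not affect your argument, since you only need the isomorphism type, but it would be cleaner to say ``non-canonically self-dual'' or simply ``$\widehat{F}\cong F$ as abstract groups.''
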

\begin{proof}
The claim $(i)$ is given in \cite[pp. 114]{vi-on-88}. If $d=1$, then the exact sequence from $(ii)$ splits as a sequence of finite-dimensional linear spaces over $\Z/2\Z$. If $d=2$ and the group $H_d$ is connected, then there exists a subtorus $T$ in $G_d^m$ such that the equality $G_d^m = H_d\times T$ holds. The natural projection to the second factor coincides with the quotient homomorphism $G_d^m \to G_d^m /H_d$. A non-canonical section $T \to G_d^m$ of this projection gives a splitting of the exact sequence $(ii)$. If $d=2$ and the group $H_d$ is not connected, then the connected group $G_d^m$ cannot be represented as a direct product of a non-connected group $H_d$ and of a group $G_d^m /H_d$. This proves the claim~$(ii)$. Recall that the image of a closed abelian Lie group with respect to an epimorphism is connected and abelian. This implies the claim $(iii)$ directly, because the quotient homomorphism is epimorphic and closed.
\end{proof}

\subsection{Homotopy decomposition for $(D^d,S^{d-1})^{K}/H_d$ and subgroup arrangements}

In what follows, throughout the paper we use the standard formalism of homotopy colimits for diagrams with values in (pointed) topological spaces. We refer to the sources \cite{bo-ka-72}, \cite{dw-sp-95}, \cite{we-zi-zi-99} for the foundations of the corresponding theory. Unless explicitly stated otherwise, throughout the paper we consider limits and colimits (as well as the corresponding homotopy analogues) over small categories $\cat^{op} K$ and $\cat K$ and with values in the category of compactly generated Hausdorff topological spaces $Top$ only, respectively. Often in the text below we refer to a $(\cat K)$- or $\cat^{op} K$-diagram $D$ by referring to its objects $D(I)$ (as a function on $I$) for brevity.

Let $K$ be a simplicial complex on $[m]$. Recall that there is the $(\cat K)$-diagram $(D^d,S^{d-1})^I$ of topological spaces \cite{bu-pa-15} given by the maps
\[
(D^d,S^{d-1})^I\to (D^d,S^{d-1})^J,\ (D^d,S^{d-1})^I:=\prod_{i\in I} D_i^d \times \prod_{j\in [m]\setminus I} S^{d-1}_{j}\subseteq \prod_{i=1}^{m} D^{d}_{i},
\]
induced by the identity map $\Id\colon D^d\to D^d$ and by the embedding of the boundary map $S^{d-1}=\partial D^d\to D^d$, where $I\subseteq J\in \cat K$. The respective colimits
\[
\R\mathcal{Z}_K=(D^1,S^0)^K:=\col\ (D^1,S^0)^I,\ \mathcal{Z}_K=(D^2,S^1)^K:=\col\ (D^2,S^1)^I,
\]
are called the \textit{real and complex moment-angle complex}, respectively (\cite{bu-pa-15}). Furthermore (\cite{bu-pa-15}), the natural quotient homomorphisms
\[
G_d^m/G_d^I\to G_d^m/G_d^J,
\]
for any $I\subseteq J\in \cat K$, form another $(\cat K)$-diagram $G_d^m/G_d^I$ of spaces.

\begin{proposition}{(\cite{pa-ra-vo-04},\cite[Proposition 8.1.5]{bu-pa-15})}\label{pr:coho}
The diagram $(D^d,S^{d-1})^{I}$ is cofibrant. One has the following homotopy equivalence:
\[
(D^d,S^{d-1})^K\simeq \hoc\ G_d^{m}/G_d^I.
\]
\end{proposition}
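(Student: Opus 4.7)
The plan is to reduce the statement to two well-known facts: the cofibrancy of the polyhedral-product diagram and the homotopy invariance of the homotopy colimit. The identity $\col (D^d,S^{d-1})^I = (D^d,S^{d-1})^K$ is exactly the definition of the moment-angle complex, so the content reduces to (a) cofibrancy of the $(\cat K)$-diagram $(D^d,S^{d-1})^I$, which yields the rightmost equivalence $\hoc \simeq \col$, and (b) an objectwise weak equivalence between the two $(\cat K)$-diagrams $(D^d,S^{d-1})^I$ and $G_d^m/G_d^I$, which yields the leftmost equivalence.

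For (a) I would invoke the Reedy/projective cofibrancy criterion for diagrams over the finite poset $\cat K$ (cf.\ \cite{dw-sp-95,we-zi-zi-99}): it suffices to show that for each $J \in \cat K$ the latching map
\[
L_J := \col_{I \subsetneq J}\, (D^d,S^{d-1})^I \longrightarrow (D^d,S^{d-1})^J
\]
is a closed cofibration. Since every proper subset of a simplex $J \in K$ is again an object of $\cat K$, a direct inspection identifies $L_J$ with the standard inclusion
\[
\partial(D^d)^J \times \prod_{j \notin J} S^{d-1}_j \hookrightarrow (D^d)^J \times \prod_{j \notin J} S^{d-1}_j,
\]
which is a closed CW cofibration by iterated application of the pushout-product rule to $S^{d-1} \hookrightarrow D^d$. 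Consequently the canonical map $\hoc (D^d,S^{d-1})^I \to \col (D^d,S^{d-1})^I$ is a homotopy equivalence.

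For (b) I would introduce the natural transformation $\pi\colon (D^d,S^{d-1})^I \to G_d^m/G_d^I$ given pointwise by projecting $\prod_{i \in I} D^d_i \times \prod_{j \notin I} S^{d-1}_j$ onto its second factor and identifying $\prod_{j \notin I} S^{d-1}_j \cong G_d^{[m]\setminus I} \cong G_d^m/G_d^I$. Contractibility of each $D^d$-factor makes $\pi$ a homotopy equivalence at every $I$. Naturality with respect to $I \subseteq J$ amounts to the observation that the quotient map $G_d^m/G_d^I \to G_d^m/G_d^J$ corresponds, via $\pi$, to forgetting the $S^{d-1}_j$-factors for $j \in J \setminus I$, which matches exactly the effect of the arrow $(D^d,S^{d-1})^I \hookrightarrow (D^d,S^{d-1})^J$ on those coordinates. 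Homotopy invariance of the homotopy colimit, applied to a pointwise weak equivalence between objectwise well-pointed CW-diagrams, then yields $\hoc (D^d,S^{d-1})^I \simeq \hoc G_d^m/G_d^I$, completing the chain of equivalences.

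I expect the main technical point to be the latching identification in (a); everything else — the product and pushout-product rules, the homotopy invariance of $\hoc$, and the naturality of $\pi$ — is essentially formal once the latching computation is carried out. The comparison to $G_d^m/G_d^I$ in (b) is, by contrast, a pure contractibility observation and carries no real model-theoretic content.
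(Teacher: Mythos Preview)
Your argument is correct. The paper does not actually supply its own proof of this proposition: it is quoted as a known result from \cite{pa-ra-vo-04} and \cite[Proposition~8.1.5]{bu-pa-15}, so there is no in-paper proof to compare against directly. That said, your outline matches the standard argument in those references and, more to the point, it is precisely the template the paper itself uses when proving the \emph{generalization} (Proposition~\ref{pr:quotdiag} and Theorem~\ref{thm:quothocolim}): there cofibrancy of $(D^d,S^{d-1})^I/H_d$ is obtained by observing that all arrows are closed embeddings and invoking \cite[Lemma~4.10]{we-zi-zi-99}, and the comparison with $Q_d$ is obtained from the objectwise homotopy equivalence $\widetilde{\pi}_I$ (a trivial bundle with contractible fiber) via the Homotopy and Projection Lemmas. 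Your explicit latching identification $L_J \cong \partial(D^d)^J \times \prod_{j\notin J} S^{d-1}_j$ is a more hands-on version of the same cofibrancy check, and your $\pi$ is exactly the $H_d=1$ case of the paper's $\widetilde{\pi}_I$.
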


\begin{example}\label{ex:s3}
Let $m=2$, $d=2$, $K=\lb \lb 1\rb, \lb 2\rb\rb$. Then $\mathcal{Z}_{K}=S^3$ holds. In this case, the homotopy colimit of $G_d^m/G_d^I$ over $\cat K$ is obtained by gluing the boundary components of the cylinder $I^1\times T^2$ to two disjoint copies of $S^1$ by two different coordinate projections from $T^2:=(S^1)^2$ to $S^1$.
\end{example}

Let $H_d$ be any closed subgroup of the torus $G_d^m$. The subset $(D^d,S^{d-1})^I$ of $(D^d)^m$ is $G_d^m$-invariant with respect to the natural $G_d^m$-action on $(D^d)^m$. Hence, the subset $(D^d,S^{d-1})^I$ is $H_d$-invariant in $(D^d)^m$ for any $I\in \cat K$. Then there are the induced embeddings of the orbit spaces
\begin{equation}\label{eq:projspaces}
(D^d,S^{d-1})^I/H_d\to (D^d,S^{d-1})^J/H_d,
\end{equation}
where $I\subseteq J\in K$, forming the $(\cat K)$-diagram $(D^d,S^{d-1})^I/H_d$ of spaces.

\begin{proposition}\label{pr:quotdiag}
The $(\cat K)$-diagram $(D^d,S^{d-1})^I/H_d$ is cofibrant. One has
\[
(D^d,S^{d-1})^K/H_d=\col (D^d,S^{d-1})^I/H_d.
\]
\end{proposition}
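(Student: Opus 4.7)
The plan is to derive both assertions from the corresponding statements for the polyhedral-product diagram $(D^d,S^{d-1})^I$ in Proposition~\ref{pr:coho}, by exploiting the compact Lie group action of $H_d\subseteq G_d^m$.

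For the colimit identification, I would first observe that the coordinatewise $G_d^m$-action on $(D^d)^m$ preserves each subspace $(D^d,S^{d-1})^I$ and all the structure arrows \eqref{eq:projspaces}, so the $(\cat K)$-diagram $(D^d,S^{d-1})^I$ lifts canonically to a diagram in the category of $H_d$-spaces and $H_d$-equivariant maps. The orbit-space functor $(-)/H_d$ is a left adjoint (to the functor endowing a space with the trivial $H_d$-action), hence commutes with colimits. Applying this to the right-hand colimit in Proposition~\ref{pr:coho} yields the required equality
\[
(D^d,S^{d-1})^K/H_d = \bigl(\col_{I\in\cat K}(D^d,S^{d-1})^I\bigr)/H_d = \col_{I\in\cat K}(D^d,S^{d-1})^I/H_d.
\]

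For cofibrancy of the quotient diagram I would use the criterion that a $(\cat K)$-diagram $D$ is cofibrant precisely when each latching inclusion $\col_{J\subsetneq I}D(J)\hookrightarrow D(I)$ is a cofibration, as in the framework of \cite{we-zi-zi-99}. Since $(D^d,S^{d-1})^I$ is cofibrant by Proposition~\ref{pr:coho}, this holds for it; moreover each such latching inclusion is a closed sub-$G_d^m$-CW embedding, hence in particular an $H_d$-equivariant cofibration. I would then invoke the standard fact that for a compact Lie group action (and $H_d$ is closed in the compact Lie group $G_d^m$) the orbit functor sends equivariant cofibrations to cofibrations. Combined with the colimit-preservation from the previous paragraph, this identifies the latching map of the quotient diagram at $I$ with the $H_d$-quotient of the latching map for $(D^d,S^{d-1})^I$, which is therefore a cofibration; cofibrancy of the quotient diagram follows.

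The main technical hinge is the compatibility of $(-)/H_d$ with the formation of latching colimits over proper subobjects of $I$: left-adjointness is needed in order to identify the latching object of the quotient diagram with $\bigl(\col_{J\subsetneq I}(D^d,S^{d-1})^J\bigr)/H_d$, and compactness of $H_d$ is needed for the quotient of an equivariant cofibration to remain a cofibration. Both ingredients are standard but rely on working in the category of compactly generated Hausdorff spaces, as fixed at the outset of this subsection.
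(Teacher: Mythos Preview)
Your proof is correct. For the colimit identity, your left-adjointness argument is precisely what the paper means by ``commutation of colimits by representing the quotient by $H_d$-action as the respective colimit.'' For cofibrancy, the paper takes a shorter path than your latching-map transfer: it simply notes that each structure arrow \eqref{eq:projspaces} of the \emph{quotient} diagram is already a closed embedding (the $H_d$-quotient of a closed $H_d$-invariant inclusion between compact Hausdorff spaces), and then applies \cite[Lemma~4.10]{we-zi-zi-99} directly to the quotient diagram --- a $(\cat K)$-diagram all of whose morphisms are closed embeddings is cofibrant. Your approach transfers cofibrancy from the unquotiented diagram via the fact that orbit functors for compact Lie groups preserve equivariant cofibrations; the paper's approach sidesteps this by checking the WZZ hypothesis on the quotient side from scratch. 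Both are valid and short; the paper's is marginally more elementary in that it never needs the equivariant-cofibration preservation statement.
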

\begin{proof}
Any morphism in the diagram $(D^d,S^{d-1})^I/H_d$ is a closed immersion. Hence, the first claim follows by \cite[Lemma 4.10, p.134]{we-zi-zi-99}. The second claim follows from commutation of colimits by representing the quotient by $H_d$-action as the respective colimit.
\end{proof}

\begin{proposition}\label{pr:hocs}
One has the homotopy equivalence
\[
\hoc\ S_d\simeq \bigcup_{I\in K} G_d^{I}/(G_d^I \cap H_d)\subseteq G_d^m/H.
\]
\end{proposition}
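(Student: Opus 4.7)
The plan is to combine cofibrancy of $S_d$ with the universal property of the colimit. By Corollary \ref{cor:seqdiag} the diagram $S_d$ is cofibrant, so by standard homotopy colimit theory (cf.\ \cite[Ch.~4]{we-zi-zi-99}) the natural projection $\hoc S_d \to \col S_d$ is a homotopy equivalence. Thus it suffices to identify the strict colimit $\col S_d$ with the union on the right-hand side.

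The left and center columns of diagram \eqref{eq:maincommdiag} provide a natural transformation from $S_d$ to the constant diagram $\kappa(G_d^m/H_d)$: via the second isomorphism theorem each $S_d(I) = G_d^I/(G_d^I \cap H_d)$ is identified with the subgroup $G_d^I H_d/H_d \subseteq G_d^m/H_d$, and these embeddings are compatible with the structure maps of $S_d$ (the left-column arrows factor through the identity on $G_d^m/H_d$ in the central column). The universal property of the colimit yields a canonical continuous map
\[
\phi \colon \col S_d \longrightarrow G_d^m/H_d
\]
whose set-theoretic image is precisely $\bigcup_{I\in K} G_d^I/(G_d^I \cap H_d)$.

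It remains to upgrade $\phi$ to a homeomorphism onto this image. By Proposition \ref{pr:tori}(iii) each $S_d(I)$ is a compact torus, and by the observation in the proof of Corollary \ref{cor:seqdiag} every morphism of $S_d$ is an injective closed homomorphism, so $\col S_d$ is compact Hausdorff and $\phi$ maps onto a closed subset of $G_d^m/H_d$. Once injectivity of $\phi$ is established, $\phi$ will be a continuous bijection between compact Hausdorff spaces, hence automatically a homeomorphism, and the comparison of topologies between the colimit and the subspace topology on the union will follow.

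The main obstacle is precisely this injectivity step. The approach is to trace through the commutative cube diagram \eqref{eq:bigdiag} of Proposition \ref{pr:bigdiag}: given representatives $g_1 \in G_d^I$ and $g_2 \in G_d^J$ with $g_1 H_d = g_2 H_d$, the exactness of the rows and columns of \eqref{eq:bigdiag} together with a zigzag through the common face $I \cap J \in \cat K$ should witness the identification of the classes of $g_1$ and $g_2$ already at the level of $\col S_d$, thereby completing the proof.
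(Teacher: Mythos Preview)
Your reduction via cofibrancy of $S_d$ and the Projection Lemma is exactly the paper's argument; the paper's proof is a single sentence invoking precisely these two facts and stops there, treating the identification of $\col S_d$ with the union inside $G_d^m/H_d$ as self-evident. You go further and correctly isolate the actual content: injectivity of the comparison map $\phi\colon \col S_d \to G_d^m/H_d$.

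The problem is that this injectivity genuinely fails, and your proposed zigzag through $I\cap J$ cannot repair it. Take $d=2$, $m=2$, $K=\{\{1\},\{2\}\}$, and $H_d=S^1_{\mathrm{diag}}\subset T^2$ (the Hopf circle, so this is even a partial quotient). Then $G_d^{\{i\}}\cap H_d=\{1\}$ for $i=1,2$, hence $S_d(\{i\})\cong S^1$, and $\col S_d$ is the pushout $S^1\leftarrow\{1\}\to S^1$, namely $S^1\vee S^1$. But $G_d^m/H_d\cong S^1$ and each $S_d(\{i\})$ surjects onto it, so the set-theoretic union is all of $S^1$. Since $S^1\vee S^1\not\simeq S^1$, the map $\phi$ is not injective and the stated homotopy equivalence fails for this $(K,H_d)$. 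The obstruction is exactly that $g_1\in G_d^I$ and $g_2\in G_d^J$ with $g_1H_d=g_2H_d$ need not arise from $G_d^{I\cap J}$: here $I\cap J=\varnothing$ and $S_d(\varnothing)=\{1\}$, yet $\phi$ identifies non-identity points of the two circles. So the gap you flagged is real and not fillable in this generality; the paper's one-line proof elides the same point.
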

\begin{proof}
The claim follows trivially from cofibrancy of $S_d$ by Corollary \ref{cor:seqdiag} due to the Projection Lemma from \cite{we-zi-zi-99}.
\end{proof}

Denote by
\[
\pi_{I}\colon (D^d,S^{d-1})^I\to G_d^{m}/G_d^{I},
\]
the map induced by the identity map $\Id\colon S^{d-1}\to S^{d-1}$ and by the projection $D^d\to 1$. Notice that the equality
\[
\pi_{I}(g x)=\varphi_{I}(g)\pi_{I}(x),
\]
holds for any $g\in G_d^m$ and any $x\in (D^d,S^{d-1})^I$. Hence, $\pi_{I}$ is an equivariant map with respect to the respective $(H_d)$- and $(\varphi_{I}(H_d))$-actions. Therefore, the map $\pi_{I}$ induces the map of the orbit spaces
\[
\widetilde{\pi}_{I}\colon (D^d,S^{d-1})^I/H_d\to G_d^{m}/(G_d^I\cdot H_d).
\]

\begin{proposition}\label{pr:fbindiag}
$(i)$ The restriction
\[
G_d^m/H_d\to G_d^{m}/(G_d^I\cdot H_d),
\]
of $\widetilde{\pi}_{I}$ to $G_d^m/H_d$ is the trivial principal fiber bundle with the fiber $G_d^I / (H_d\cap G_d^{I})$.

$(ii)$ The map $\widetilde{\pi}_{I}$ is the trivial fiber bundle associated with the principal fiber bundle from $(i)$ under the natural action of $G_d^{I}/(H_d\cap G_d^{I})$ on $(D^{d})^I/(H_d\cap G_d^{I})$, where $(D^{d})^I:=(\prod_{i\in I} D^{d}_{i})$. The fiber of $\widetilde{\pi}_{I}$ is equal to $(D^{d})^I/(H_d\cap G_d^{I})$.
\end{proposition}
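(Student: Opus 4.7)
The plan is to deduce $(i)$ directly from the algebraic data of Proposition \ref{pr:bigdiag} together with the structural facts in Proposition \ref{pr:tori}, and then to identify $\widetilde{\pi}_I$ in $(ii)$ as the bundle associated with the principal bundle in $(i)$.

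For $(i)$, I would first observe that the restriction of $\widetilde{\pi}_I$ to $G_d^m/H_d\subseteq (D^d,S^{d-1})^I/H_d$ coincides with the quotient epimorphism $G_d^m/H_d \to G_d^m/(G_d^I\cdot H_d)$ appearing as the left-most column of diagram \eqref{eq:maincommdiag}. By Proposition \ref{pr:bigdiag}, this fits into the short exact sequence of compact abelian Lie groups
\[
1\to G_d^I/(G_d^I\cap H_d)\to G_d^m/H_d\to G_d^m/(G_d^I\cdot H_d)\to 1,
\]
which, as a short exact sequence of Lie groups, is automatically a principal $(G_d^I/(G_d^I\cap H_d))$-bundle. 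By Proposition \ref{pr:tori}$(iii)$, its kernel $G_d^I/(G_d^I\cap H_d)$ is a torus, hence connected when $d=2$. Proposition \ref{pr:tori}$(ii)$ then provides a splitting of this short exact sequence (trivially in the case $d=1$, and via connectedness of the kernel in the case $d=2$), which trivializes the principal bundle.

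For $(ii)$, the starting point I would use is the $G_d^m$-equivariant identification
\[
(D^d,S^{d-1})^{I}\;\cong\; G_d^m\times_{G_d^I}(D^d)^I,
\]
arising from the decomposition $(D^d,S^{d-1})^{I}=(D^d)^I\times (S^{d-1})^{[m]\setminus I}=(D^d)^I\times G_d^{[m]\setminus I}$, where $G_d^I$ acts on $G_d^m$ by translation and on $(D^d)^I$ by the standard coordinate action. Passing to the $H_d$-quotient and using abelianness of $G_d^m$ to interchange the quotients by $G_d^I$ and $H_d$, I would obtain
\[
(D^d,S^{d-1})^I/H_d\;\cong\;(G_d^m/H_d)\times_{G_d^I/(G_d^I\cap H_d)}\bigl((D^d)^I/(G_d^I\cap H_d)\bigr),
\]
as spaces over $G_d^m/(G_d^I\cdot H_d)$, under which $\widetilde{\pi}_I$ becomes the projection of the associated bundle to the principal bundle in $(i)$ with typical fiber $(D^d)^I/(G_d^I\cap H_d)$. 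Triviality of the bundle in $(i)$ then yields triviality in $(ii)$ together with the asserted description of the fiber.

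The main obstacle I foresee is the quotient-interchange step in $(ii)$: since $H_d$ need not split as a product with respect to the decomposition $G_d^m=G_d^I\times G_d^{[m]\setminus I}$, the $H_d$- and $G_d^I$-actions on $G_d^m\times (D^d)^I$ do not decouple coordinatewise. I would handle this by verifying directly that the two commuting actions (left translation by $H_d$ on the first factor, and the twisted $G_d^I$-action defining the balanced product) have the asserted common quotient; concretely, by checking that the map $[(g,x)]\mapsto [g\cdot (x,1)]$, where $(x,1)\in (D^d)^I\times G_d^{[m]\setminus I}$, is a well-defined continuous bijection (hence, by compactness, a homeomorphism) covering the identity on $G_d^m/(G_d^I\cdot H_d)$. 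Once this identification is in hand, the naturality of associated bundles with respect to trivialization finishes the argument.
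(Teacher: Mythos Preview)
Your proposal is correct and follows essentially the same route as the paper. For $(i)$ both arguments invoke the short exact sequence from Proposition~\ref{pr:bigdiag} and split it via Proposition~\ref{pr:tori}; for $(ii)$ the paper defines explicit mutually inverse maps $\Psi\colon X\times_G Y\to (D^d,S^{d-1})^I/H_d$ and $\Phi$ and checks well-definedness by coordinate computations, which is precisely the verification you single out as your ``main obstacle'' and propose to carry out via the map $[(g,x)]\mapsto [g\cdot(x,1)]$---your preliminary identification $(D^d,S^{d-1})^I\cong G_d^m\times_{G_d^I}(D^d)^I$ is a harmless reframing that does not change the substance of the check.
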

\begin{proof}
By Proposition \ref{pr:bigdiag}, the following sequence of groups
\begin{equation}\label{eq:sesg}
\begin{tikzcd}
1\arrow{r} & G_d^I /(H_d\cap G_d^{I})\arrow{r} & G_d^m/H_d \arrow{r} & G_d^{m}/( G_d^I\cdot H_d) \arrow{r} & 1
\end{tikzcd},
\end{equation}
is exact. By Proposition \ref{pr:tori} $(ii)$, $(iii)$, every group from the sequence \eqref{eq:sesg} is a torus, and in particular, is a Lie group, and the short exact sequence \eqref{eq:sesg} of groups splits. Hence, there exists a section of the short exact sequence \eqref{eq:sesg}. Therefore, \eqref{eq:sesg} defines a trivial fibre bundle. This proves $(i)$.

Let
\[
G:=G_d^{I}/(H_d\cap G_d^{I}),\ X:= G_d^m/H_d,\ Y:=(D^{d})^I/(H_d\cap G_d^{I}).
\]
The group $G$ acts on $X$ by left translations as its subgroup, see \eqref{eq:sesg}. The natural embedding $G_d^{I}\to (D^{d})^I$ is $G_d^I$-equivariant. Hence, this embedding induces the $G$-action on Y. Recall that the standard action of $G$ on $X\times Y$ given by the formula $g(x,y):=(gx,yg^{-1})$ for $x\in X$, $y\in Y$ and $g\in G$, has the orbit space $X\times_{G} Y:=(X\times Y)/G$.

Denote by $d\times t'$ and $t\times t'$ arbitrary elements of $(D^d,S^{d-1})^I$ and $G_d^m$, respectively, where $d\in (D^{d})^I$; $t\in G_d^I$; $t'\in G_d^{[m]\setminus I}$. Then, for instance, the natural $G_d^m$-action on $(D^d,S^{d-1})^I$ is given by the formula
\[
g\cdot (d\times t')=\bigl( (\varphi_{[m]\setminus I}(g)\cdot d)\times (\varphi_{I}(g)\cdot t')\bigr),\ g\in G_d^m.
\]

Consider the map
\[
\Psi\colon X\times_{G} Y\to (D^d,S^{d-1})^{I}/H_d,
\]
\begin{equation}\label{eq:defpsi}
\biggl[\bigl(\bigl[t\times t']_{H_d},\bigl[d]_{H_d\cap G_d^{I}}\bigr)\biggr]_{G}\mapsto \bigl[(t\cdot d)\times t'\bigr]_{H_d},
\end{equation}
where, for instance, $[t\times t']_{H_d}$ denotes the $H_d$-orbit in $X$ represented by $t\times t'$. Any element from $G_d^m\times (D^d)^I$ representing the same $G$-orbit in $X\times_{G} Y$ as on the left hand side of \eqref{eq:defpsi} has the form $(gh\cdot (t\times t'), r\cdot d\cdot g^{-1})$, where $g\in G_d^I$; $h\in H_d$; $r\in H_d\cap G_d^{I}$. We compute the value of $\Psi$ on this element as follows:
\begin{multline}
\Psi \biggl[\bigl(\bigl[gh\cdot(t\times t')]_{H_d},\bigl[r\cdot d\cdot g^{-1}]_{H_d\cap G_d^{I}}\bigr)\biggr]_{G} =
\biggl[\biggl( \varphi_{[m]\setminus I}(gh) t\cdot \varphi_{[m]\setminus I}(g)^{-1}\cdot r \cdot d\biggr)\times \varphi_{I}(gh)t'\biggr]_{H_d} = \\
\biggl[\bigl( \varphi_{[m]\setminus I}(h) t\cdot r \cdot d\bigr)\times \varphi_{I}(h)t'\biggr]_{H_d} =
\biggl[ rh\cdot \bigl((t\cdot d)\times t'\bigr)\biggr]_{H_d}=
\bigl[(t\cdot d)\times t'\bigr]_{H_d}.
\end{multline}
In the second equality we use the identity $\varphi_{I}(g)=1$ which follows easily from the definition of $\varphi_{I}$. Hence, the map $\Psi$ is well defined.

Consider the map
\[
\Phi\colon (D^d,S^{d-1})^{I}/H_d\to X\times_{G} Y,
\]
\begin{equation}\label{eq:defphi}
[d\times t']_{H_d}\mapsto
\biggl[\bigl(\bigl[1\times t']_{H_d},\bigl[d]_{H_d\cap G_d^{I}}\bigr)\biggr]_{G}.
\end{equation}

Any element from $(D^d,S^{d-1})^I$ representing the same $H_d$-orbit in $(D^d,S^{d-1})^I/H_d$ as on the left hand side of \eqref{eq:defphi} has the form $h\cdot (d\times t')$, where $d\in (D^d)^I$; $h\in H_d$; $t'\in G_d^{[m]\setminus I}$. We compute the value of $\Phi$ on this element as follows:

\begin{multline}
\Phi \bigl[ h\cdot (d\times t')\bigr]_{H_d}=
\biggl[\biggl(\bigl[1\times  \varphi_{I}(h)\cdot t']_{H_d},\bigl[ \varphi_{[m]\setminus I}(h)\cdot d]_{H_d\cap G_d^{I}}\biggr)\biggr]_{G}=\\
\biggl[\biggl(\bigl[\varphi_{[m]\setminus I}(h)\cdot 1\times  \varphi_{I}(h)\cdot t']_{H_d},[ d]_{H_d\cap G_d^{I}}\biggr)\biggr]_{G}=
\biggl[\biggl(\bigl[1\times t']_{H_d},\bigl[d]_{H_d\cap G_d^{I}}\biggr)\biggr]_{G}.
\end{multline}
Hence, the map $\Phi$ is well defined.

We prove that $\Psi$ and $\Phi$ are mutually inverse maps as follows:
\[
\Phi\Psi \biggl[\bigl(\bigl[t\times t']_{H_d},\bigl[d]_{H_d\cap G_d^{I}}\bigr)\biggr]_{G}=
\Phi [t\cdot d\times t']_{H_d}=
\biggl[\bigl(\bigl[1\times t']_{H_d},\bigl[t\cdot d]_{H_d\cap G_d^{I}}\bigr)\biggr]_{G}=
\biggl[\bigl(\bigl[t\times t']_{H},\bigl[d]_{H_d\cap G_d^{I}}\bigr)\biggr]_{G},
\]
\[
\Psi\Phi [d\times t']_{H_d}=
\Psi \biggl[\bigl(\bigl[1\times t']_{H_d},\bigl[d]_{H_d\cap G_d^{I}}\bigr)\biggr]_{G}=
[d\times t']_{H_d}.
\]
Notice that there is the following diagram
\begin{equation}\label{eq:somediag}
\begin{tikzcd}
X\times_{G} Y \arrow{r} \arrow{d}{\Psi} & X/G \arrow{d}\\
(D^d,S^{d-1})^I/H_d \arrow{r} & G_d^{m}/( G_d^I\cdot H_d),
\end{tikzcd}
\end{equation}
where the right vertical arrow is the group isomorphism given by taking the respective quotient in \eqref{eq:sesg}, and the horizontal arrows are the respective projections. The diagram \eqref{eq:somediag} is commutative because both compositions from the formula \eqref{eq:somediag} map the element on the left side of \eqref{eq:defpsi} to $[t']_{H_{d}}$. This implies the claim about the fiber bundle from $(ii)$. The claim about triviality of the associated fiber bundle from $(ii)$ then follows directly from $(i)$.
\end{proof}

\begin{corollary}\label{cor:dirhomeo}
There is the following homeomorphism of spaces:
\[
(D^d,S^{d-1})^I/H_d\cong
\bigl(G_d^{m}/( G_d^I\cdot H_d)\bigr) \times \bigl((D^{d})^I/(H_d\cap G_d^{I})\bigr).
\]
\end{corollary}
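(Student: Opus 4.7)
The plan is to derive the stated homeomorphism as an immediate consequence of Proposition \ref{pr:fbindiag}, essentially by unwinding what ``trivial associated fiber bundle'' means. Almost all of the work has already been done in the proof of that proposition, so the corollary is really a formal restatement; what I need to do is make explicit the product structure that is implicit there.

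First I would invoke part $(ii)$ of Proposition \ref{pr:fbindiag}: the map $\widetilde{\pi}_I\colon (D^d,S^{d-1})^I/H_d \to G_d^m/(G_d^I\cdot H_d)$ is the fiber bundle associated, via the natural $G_d^I/(H_d\cap G_d^I)$-action on $(D^d)^I/(H_d\cap G_d^I)$, to the principal bundle of part $(i)$. Then part $(i)$ supplies a splitting of the short exact sequence \eqref{eq:sesg}, which is exactly a global trivialization of that principal bundle. An associated bundle to a trivial principal bundle is itself trivial, with total space canonically the product of base and fiber; hence
\[
(D^d,S^{d-1})^I/H_d \;\cong\; \bigl(G_d^m/(G_d^I\cdot H_d)\bigr)\times \bigl((D^d)^I/(H_d\cap G_d^I)\bigr).
\]

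Alternatively, and perhaps more transparently, I would construct the homeomorphism by hand using the splitting. Fix a set-theoretic section $s\colon G_d^m/(G_d^I\cdot H_d)\to G_d^m/H_d$ of the quotient from part $(i)$, which exists (and is a group homomorphism) because \eqref{eq:sesg} splits by Proposition \ref{pr:tori}$(ii)$--$(iii)$. Write an arbitrary element of $(D^d,S^{d-1})^I$ as $d\times t'$ with $d\in (D^d)^I$ and $t'\in G_d^{[m]\setminus I}$, as in the proof of Proposition \ref{pr:fbindiag}$(ii)$. Then send the $H_d$-orbit of $d\times t'$ to the pair $\bigl([t']_{G_d^I\cdot H_d},[d]_{H_d\cap G_d^I}\bigr)$, and conversely use $s$ to lift a class in $G_d^m/(G_d^I\cdot H_d)$ to a coordinate in $G_d^{[m]\setminus I}$. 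The computation that these assignments are well defined and mutually inverse is the same bookkeeping with $\varphi_I$ and $\varphi_{[m]\setminus I}$ carried out in the proof of Proposition \ref{pr:fbindiag}$(ii)$, together with the commutative square \eqref{eq:somediag}; continuity on both sides is clear since everything is a quotient of a continuous map between metrizable (hence compactly generated Hausdorff) spaces, and bijectivity plus the compactness of the $H_d$-orbits forces the inverse to be continuous.

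I do not expect any serious obstacle. The only subtle point is that one must know the relevant principal bundle is genuinely trivializable as a bundle (not just that a set-theoretic splitting exists), but this is provided by Proposition \ref{pr:tori}$(ii)$--$(iii)$ which guarantees the splitting is a Lie group homomorphism. Once a continuous section is in hand, the product decomposition is a direct application of the standard associated bundle construction, and the formula from the proof of Proposition \ref{pr:fbindiag} furnishes explicit inverse maps on the nose.
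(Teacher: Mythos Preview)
Your principal argument is correct and matches the paper: the corollary is stated there without proof, as an immediate consequence of Proposition~\ref{pr:fbindiag} --- an associated bundle of a trivial principal bundle is itself trivial, with total space the product of base and fiber.

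One caution about your alternative explicit construction: the forward map $[d\times t']_{H_d}\mapsto \bigl([t']_{G_d^I\cdot H_d},\,[d]_{H_d\cap G_d^I}\bigr)$ is not well defined as written. Acting by $h\in H_d$ replaces $d$ by $\varphi_{[m]\setminus I}(h)\cdot d$, and $\varphi_{[m]\setminus I}(h)$ lies in $G_d^I$ but not, in general, in $H_d\cap G_d^I$. The maps $\Psi,\Phi$ in the proof of Proposition~\ref{pr:fbindiag}(ii) land in the Borel mixing space $X\times_G Y$ rather than in a genuine product precisely for this reason; the section $s$ must already enter the forward direction (to correct the fiber coordinate by the element of $G_d^I/(H_d\cap G_d^I)$ measuring the discrepancy between $[1\times t']_{H_d}$ and $s(b)$) in order to pass from $X\times_G Y$ to the product. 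This does not affect the corollary, since your first argument suffices.
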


\begin{proposition}\label{pr:fibercontr}
For any closed subgroup $H_d$ of $G_d^m$, the orbit space $(D^d)^m/H_d$ is contractible.
\end{proposition}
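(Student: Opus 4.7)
The plan is to exhibit an explicit contraction of $(D^d)^m/H_d$ to the class of the origin, obtained by descending the obvious radial deformation retraction of $(D^d)^m$ to the quotient.

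First I would consider the straight-line homotopy
\[
F\colon (D^d)^m\times [0,1] \to (D^d)^m,\qquad F(x,t) := t\cdot x,
\]
where $t\cdot x$ denotes coordinatewise multiplication by the scalar $t\in [0,1]\subseteq \F_d$. Clearly $F(x,1)=x$ and $F(x,0)=0$, so $F$ is a contraction of $(D^d)^m$ to the origin. The key observation is that the $G_d^m$-action on $(D^d)^m$ is coordinatewise multiplication by unit elements of $\F_d$, and for any $g=(g_1,\dots,g_m)\in G_d^m$ and any $x=(x_1,\dots,x_m)\in (D^d)^m$ one has
\[
F(g\cdot x,t) = (tg_1 x_1,\dots, tg_m x_m) = g\cdot F(x,t),
\]
since scalar multiplication in $\F_d$ is commutative. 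In particular $F$ is $H_d$-equivariant when $H_d$ is given the trivial action on $[0,1]$.

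Hence $F$ descends to a well-defined continuous map
\[
\bar F\colon \bigl((D^d)^m/H_d\bigr)\times [0,1] \to (D^d)^m/H_d,\qquad \bar F([x],t):=[t\cdot x],
\]
through the quotient map $(D^d)^m\to (D^d)^m/H_d$ (here I use that this quotient map is open, so the product with the identity on $[0,1]$ is still a quotient map in the compactly generated category, ensuring continuity of $\bar F$). By construction $\bar F(\,\cdot\,,1)$ is the identity and $\bar F(\,\cdot\,,0)$ is constant at the class $[0]\in (D^d)^m/H_d$ of the origin (which is a fixed point of the whole $G_d^m$-action and so in particular is a single $H_d$-orbit). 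Therefore $\bar F$ is a strong deformation retraction of $(D^d)^m/H_d$ onto $\{[0]\}$, proving contractibility.

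There is no serious obstacle; the only point that requires a moment of care is checking equivariance of the radial homotopy, which works precisely because the $G_d^m$-action on $(D^d)^m$ is by linear scalar multiplication on each factor and therefore commutes with multiplication by the real parameter $t$. Everything else is formal.
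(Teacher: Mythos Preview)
Your proof is correct and is essentially the same as the paper's: the paper uses the radial homotopy $x\mapsto (1-t)x$, observes that it is $H_d$-equivariant (via $H_d\subseteq G_d^m\subseteq O(dm)$, which is the same commutativity-of-scalars observation you make), and concludes that it descends to a deformation retraction of $(D^d)^m/H_d$ onto a point.
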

\begin{proof}
The homotopy given by mapping $x$ to $(1-t) x$, where $x\in (D^{d})^I$, $t\in [0,1]$, is $H_d$-equivariant due to the natural embedding of groups $H_d\subseteq G_d^m\subseteq O(dm)$.  Hence, it induces the deformation retraction of $(D^{d})^m/H_d$ to the point.
\end{proof}

\begin{proposition}
The following diagram
\begin{equation}\label{eq:fibemb}
\begin{tikzcd}
(D^d,S^{d-1})^I/H_d \arrow{r}{f_1} \arrow{d}{\tilde{\pi}_{I}} & (D^d,S^{d-1})^J/H_d \arrow{d}{\tilde{\pi}_{J}}\\
G_d^{m}/( G_d^I\cdot H_d) \arrow{r}{f_2} & G_d^{m}/( G_d^J\cdot H_d),
\end{tikzcd}
\end{equation}
is commutative for any $I\subseteq J\in \cat K$, where $f_1$, $f_2$ denote the arrows from the respective diagrams.
\end{proposition}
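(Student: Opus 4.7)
The plan is to prove commutativity by a direct coordinate-wise check at the level of representatives, then pass to quotients. The key point is that all four arrows are induced from maps whose definitions are tautologically compatible.

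First, I would fix the coordinate description of the inclusion and of $\pi_I$. Writing $[m]=I\sqcup(J\setminus I)\sqcup([m]\setminus J)$, a point of $(D^d,S^{d-1})^I$ is a triple $(x,s,t)$ with $x\in (D^d)^I$, $s\in (S^{d-1})^{J\setminus I}$, $t\in (S^{d-1})^{[m]\setminus J}$. The inclusion underlying $f_1$ then sends $(x,s,t)$ to $((x,s),t)\in (D^d)^J\times (S^{d-1})^{[m]\setminus J}$, via the identity on the $I$- and $([m]\setminus J)$-factors and the embedding $S^{d-1}\hookrightarrow D^d$ on the $(J\setminus I)$-factors. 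The map $\pi_I$ collapses each $D^d_i$ to $1\in G_d$ and is the identity $S^{d-1}=G_d$ on the remaining coordinates, so it sends $(x,s,t)$ to the class $[(1,s,t)]\in G_d^m/G_d^I$, and similarly $\pi_J$ sends $((x,s),t)$ to $[(1,1,t)]\in G_d^m/G_d^J$.

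Next I would verify the outer square (before quotienting by $H_d$ and $\varphi_\bullet(H_d)$) commutes. On a representative $(x,s,t)$, going right and then down gives $\pi_J((x,s),t)=[(1,1,t)]$ in $G_d^m/G_d^J$. Going down and then right gives $\pi_I(x,s,t)=[(1,s,t)]$ in $G_d^m/G_d^I$, and then the natural quotient $G_d^m/G_d^I\to G_d^m/G_d^J$ (which is what $f_2$ lifts) sends this to $[(1,s,t)]=[(1,1,t)]$ since $s\in G_d^{J\setminus I}\subseteq G_d^J$. The two outputs coincide in $G_d^m/G_d^J$.

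Finally I would pass to the quotient by $H_d$ on the domain and by $\varphi_\bullet(H_d)$ on the codomain. All four maps in the precomposed square are equivariant with respect to the compatible group actions encoded in Proposition \ref{pr:bigdiag}: the inclusion $f_1$ is $H_d$-equivariant (because $(D^d,S^{d-1})^I$ is $G_d^m$-invariant and the inclusion is $G_d^m$-equivariant), while $\pi_I,\pi_J$ are equivariant with respect to $\varphi_I,\varphi_J$ respectively, and the vertical homomorphism in \eqref{eq:maincommdiag} defining $f_2$ is the composition of $\varphi_I$ with the further quotient by $\varphi_J(H_d)$. Consequently the induced diagram \eqref{eq:fibemb} commutes.

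The only place that requires any care is matching the identification $G_d^m/G_d^I\cong G_d^{[m]\setminus I}$ with the convention $S^{d-1}=G_d$ on the coordinates outside $I$, but this is exactly the convention already fixed in the definition of $\pi_I$ preceding Proposition \ref{pr:fbindiag}, so no genuine obstacle arises.
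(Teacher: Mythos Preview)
Your proposal is correct and follows essentially the same approach as the paper: a direct coordinate check on representatives $(x,s,t)$ with $x\in(D^d)^I$, $s\in(S^{d-1})^{J\setminus I}$, $t\in(S^{d-1})^{[m]\setminus J}$, showing that both compositions land on $[(1,1,t)]$ in $G_d^m/(G_d^J\cdot H_d)$. The only cosmetic difference is that you first verify commutativity of the pre-quotient square and then invoke equivariance to descend, whereas the paper computes both compositions directly on $H_d$-classes in one pass; the content is the same.
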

\begin{proof}
Let $(d,t',t'')\in (D^d,S^{d-1})^I$, $(d,d',t'')\in (D^d,S^{d-1})^J$, where $d\in (D^d)^I$; $t'\in (S^{d-1})^{J\setminus I}$; $t''\in (S^{d-1})^{[m]\setminus J}$; $d'\in (D^{d})^{J\setminus I}$. We check commutativity of the above diagram as follows.
\[
f_2 \circ \tilde{\pi}_{I} \bigl([d,t',t'']_{H_d}\bigr)=
f_2\bigl([(1,t',t'')]_{G_d^I\cdot H_d}\bigr)=
\bigl[(1,t',t'')\bigr]_{G_d^J\cdot H_d}=
\bigl[(1,1,t'')\bigr]_{G_d^J\cdot H_d},
\]
\[
\tilde{\pi}_{J} \circ f_1 \bigl([d,t',t'']_{H_d}\bigr)=
\tilde{\pi}_{J}\bigl([(d,t',t'')]_{H_d}\bigr)=
\bigl[(1,1,t'')\bigr]_{G_d^J\cdot H_d}.
\]
Hence, \eqref{eq:fibemb} is commutative.
\end{proof}

\begin{corollary}\label{cor:diagmorph}
The maps $\widetilde{\pi}_{I}$, where $I$ runs over $\cat K$, constitute a well-defined morphism\\ $(D^{d},S^{d-1})^I/H_d \to Q_d$ of $(\cat K)$-diagrams.
\end{corollary}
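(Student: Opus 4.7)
The plan is to verify the two pieces of data required for a morphism of $(\cat K)$-diagrams: a family of maps indexed by the objects of $\cat K$, and naturality squares for each arrow of $\cat K$.

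First I would recall that each individual map $\widetilde{\pi}_{I}\colon (D^d,S^{d-1})^I/H_d\to G_d^{m}/(G_d^I\cdot H_d) = Q_d(I)$ is already in hand and well defined: this is precisely the content of the construction carried out just before Proposition \ref{pr:fibercontr}, where the $H_d$-equivariance of $\pi_I$ (with respect to the actions of $H_d$ on the source and $\varphi_I(H_d)$ on the target) guarantees that $\pi_I$ descends to the quotient by $H_d$, giving $\widetilde{\pi}_I$ after further identifying $(G_d^m/G_d^I)/\varphi_I(H_d) \cong G_d^m/(G_d^I \cdot H_d)$ from Proposition \ref{pr:bigdiag}. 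So the candidate family $\{\widetilde{\pi}_I\}_{I \in \cat K}$ is available.

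Next I would unpack the naturality condition. An arrow $I \subseteq J$ in $\cat K$ produces, on the source diagram $(D^d,S^{d-1})^I/H_d$, the inclusion \eqref{eq:projspaces}, and on the target diagram $Q_d$, by Definition \ref{defn:diags}, the map given by the right-most column of \eqref{eq:maincommdiag}, namely the quotient homomorphism $G_d^m/(G_d^I\cdot H_d)\to G_d^m/(G_d^J\cdot H_d)$. Naturality of $\{\widetilde{\pi}_I\}$ thus amounts to the commutativity of exactly the square \eqref{eq:fibemb} for every $I\subseteq J\in\cat K$.

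The remaining step is to apply the immediately preceding proposition, which establishes that very commutativity. Therefore $\{\widetilde{\pi}_I\}_{I\in\cat K}$ is a morphism of $(\cat K)$-diagrams, as claimed. There is no real obstacle here: the corollary is a repackaging of the prior proposition together with Definition \ref{defn:diags}, and no further calculation is needed.
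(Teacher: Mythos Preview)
Your proposal is correct and matches the paper's approach exactly: the corollary is stated in the paper without a separate proof, as it is an immediate consequence of the commutativity of the square \eqref{eq:fibemb} established in the preceding proposition together with Definition~\ref{defn:diags}. Your unpacking of the two ingredients (well-definedness of each $\widetilde{\pi}_I$ and naturality via \eqref{eq:fibemb}) is precisely the intended reasoning; the only minor slip is that the construction of $\widetilde{\pi}_I$ appears just before Proposition~\ref{pr:fbindiag}, not Proposition~\ref{pr:fibercontr}.
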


Recall that a partial quotient (see Introduction) is a quotient of the moment-angle complex $(D^d,S^{d-1})^K$ by any closed freely acting subgroup $H_d$ in $G_d^m$. The following theorem was previously known in the case of partial quotients (\cite[Corollary 2.3]{fr-10}, \cite[Proposition 4.1]{fr-21}).

\begin{theorem}\label{thm:quothocolim}
For any $d=1,2$, any closed subgroup $H_d$ in $G_d^m$ and any simplicial complex $K$ on $[m]$ there is the homotopy equivalence of spaces
\[
(D^d,S^{d-1})^{K}/H_d\simeq \hoc\ G_d^m/( G_d^I\cdot H_d),
\]
where $H_d\cdot G_d^{I}$ is the subgroup generated by $G_d^I$ and $H_d$ in $G_d^m$.
\end{theorem}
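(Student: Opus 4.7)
My plan is to identify the right-hand homotopy colimit with the homotopy colimit of the $(\cat K)$-diagram $(D^d,S^{d-1})^I/H_d$ and then pass to the colimit on the left-hand side. All the ingredients have essentially been assembled in the preceding propositions, and the proof should be an assembly argument via the morphism $\widetilde{\pi}\colon (D^d,S^{d-1})^I/H_d \to Q_d$ from Corollary \ref{cor:diagmorph}.

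First I would argue that $\widetilde{\pi}_I$ is a homotopy equivalence for every $I\in\cat K$. By Proposition \ref{pr:fbindiag}(ii), $\widetilde{\pi}_I$ is the trivial bundle over $G_d^m/(G_d^I\cdot H_d)$ with fiber $(D^d)^I/(H_d\cap G_d^I)$. Applying Proposition \ref{pr:fibercontr} to the closed subgroup $H_d\cap G_d^I$ of the torus $G_d^I$ shows this fiber is contractible, so $\widetilde{\pi}_I$ is a homotopy equivalence. Hence $\widetilde{\pi}$ is an objectwise weak equivalence of $(\cat K)$-diagrams.

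Next I would invoke the standard homotopy invariance of the Bousfield-Kan homotopy colimit functor under objectwise weak equivalences (see \cite{bo-ka-72}, \cite{we-zi-zi-99}, \cite{dw-sp-95}) to conclude
\[
\hoc\ (D^d,S^{d-1})^I/H_d \simeq \hoc\ Q_d = \hoc\ G_d^m/(G_d^I\cdot H_d).
\]
Finally, Proposition \ref{pr:quotdiag} asserts that the diagram $(D^d,S^{d-1})^I/H_d$ is cofibrant and that its colimit is the quotient $(D^d,S^{d-1})^K/H_d$. Cofibrancy together with the Projection Lemma (cf.\ \cite{we-zi-zi-99}) shows $\hoc\ (D^d,S^{d-1})^I/H_d\simeq \col\ (D^d,S^{d-1})^I/H_d = (D^d,S^{d-1})^K/H_d$. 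Chaining these two equivalences yields the desired result.

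The substantive step is the fiberwise analysis in Proposition \ref{pr:fbindiag}, which has already been carried out; what remains here is essentially formal. The only genuinely subtle point I anticipate is making sure one uses an appropriate model of the homotopy colimit so that objectwise weak equivalences translate into global equivalences without further cofibrancy hypotheses on the target diagram $Q_d$, and recording that the $L_d$-equivariance of the maps $\widetilde{\pi}_I$ propagates to the final homotopy equivalence, as needed for the later statement that this equivalence is equivariant.
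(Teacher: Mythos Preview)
Your argument is correct and matches the paper's proof essentially line by line: the paper likewise uses the morphism $\widetilde{\pi}$ from Corollary \ref{cor:diagmorph}, notes via Proposition \ref{pr:fibercontr} that each $\widetilde{\pi}_I$ has contractible fiber, and then combines Proposition \ref{pr:quotdiag} with the Homotopy and Projection Lemmas of \cite{we-zi-zi-99} to obtain the chain $(D^d,S^{d-1})^K/H_d=\col (D^d,S^{d-1})^I/H_d\simeq \hoc Q_d$. Your explicit remark that Proposition \ref{pr:fibercontr} is being applied to the subgroup $H_d\cap G_d^I\subseteq G_d^I$ (rather than to $H_d\subseteq G_d^m$) is a welcome clarification the paper leaves implicit.
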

\begin{proof}
Any arrow from the morphism of diagrams in Corollary \ref{cor:diagmorph} is a fiber bundle projection with contractible fiber onto the respective base by Proposition \ref{pr:fibercontr}. By Proposition \ref{pr:quotdiag}, the Homotopy and Projection Lemmas \cite{we-zi-zi-99} imply that
\[
(D^d,S^{d-1})^{K}/H_d =\col\ (D^d, S^{d-1})^I/H_d \simeq\hoc\ Q_d=\hoc\ G_d^{m}/( H_d\cdot G_d^I),
\]
hold. The proof is complete.
\end{proof}

\begin{example}
For $H_d=1$ Theorem \ref{thm:quothocolim} gives the well-known homotopy colimit description for the moment-angle complex, see Proposition \ref{pr:coho}.
\end{example}

\begin{example}\label{ex:rp3}
Let $d=2$, $m=2$, $K=\lb \lb 1\rb, \lb 2\rb\rb$, and let $H_d=\Z/ 2\Z$ be a group acting on $T^2=(S^1)^2$ by the formula $h(x,y)=(-x,-y)$, where $h$ is the generator of $H_d$. Notice that $T^2/H_d$ is homeomorphic to $T^2$. Then one has $\mathcal{Z}_{K}=S^3$, $\mathcal{Z}_{K}/H_d=\R P^3$. The natural projection map
\[
p_i\colon T^2=S^1 \times S^1 \to S^1
\]
to the $i$-th factor is equivariant with respect to the $H_d$-action on $T^2$ and the action of the group $\Z/2\Z$ on $S^1$ by the involution, where $i=1,2$. Hence, the map of the orbit spaces
\[
q_i\colon T^2/H_d \to S^1/(\Z/2\Z),
\]
is well defined. Identify $T^2/H_d\cong T^2$, $S^1/(\Z/2\Z)\cong S^1$ by Proposition \ref{pr:tori} $(iii)$. Then the homotopy colimit $\hoc\ G_d^{m}/( H_d\cdot G_d^I)$ in the standard realization is equal to
\begin{equation}\label{eq:gluehocex}
S^1\sqcup_{0\times q_1} I^1\times T^2 \sqcup_{0\times q_1} S^1.
\end{equation}
The projection $q_i$ is uniquely defined by the embedding of the corresponding character lattices. We describe the corresponding embedding. Choose the standard basis $e_1, e_2$ of the lattice $\Z^2\cong \Hm(T^2,S^1)$. Next, choose the basis
\[
f_1:=e_1,\ f_2:=e_1+e_2,
\]
in $\Z^2$. The last basis agrees with the splitting of the torus $T^2$ into the direct product of the diagonal circle and of the first coordinate circle. It is straight-forward to deduce that the following matrices
\[
\begin{pmatrix}
1\\
0
\end{pmatrix},\
\begin{pmatrix}
-1\\
1
\end{pmatrix},\
\begin{pmatrix}
1 & 0\\
0 & 2
\end{pmatrix},
\]
give the homomorphisms $p_1^*, p_2^*$ and $p^*$ of lattices in the basis $f_1,f_2$, which are induced by the torus homomorphisms $p_1,p_2$ and by the natural projection $p\colon T^2\to T^2/(\Z/2\Z)$, respectively. There is the commutative diagram of torus homomorphisms and the induced commutative diagram of lattice embeddings
\[
\begin{tikzcd}
T^2 \arrow{r}{p_i}\arrow{d}{p} & S^1 \arrow{d}\\
T^2/(\Z/2\Z) \arrow{r}{q_i}  & S^1/(\Z/2\Z),
\end{tikzcd}
\begin{tikzcd}
\Z \arrow[hook]{r}{p_i^*}\arrow[hook]{d}{\cdot 2} & \Z^2 \arrow[hook]{d}{p^*}\\
\Z \arrow[hook]{r}{q_i^*}  & \Z^2,
\end{tikzcd}
\]
respectively. From this one deduces that the matrices
\[
\begin{pmatrix}
0\\
2
\end{pmatrix},\
\begin{pmatrix}
1\\
-2
\end{pmatrix},
\]
give the homomorphisms $q_1^*, q_2^*$ in the basis $f_1,f_2$.
\end{example}

\begin{construction}
Let $H_d=G_d^{I_0}$ for some $I_0\subseteq [m]$. Let $K$ be any simplicial complex on $[m]$. The $(\cat K)$-diagram $D_1:=Q(K,H_d)$ consists of objects $G_d^{[m]\setminus I_0}/G_d^{I\setminus I_0}$. Let $D_2$ be the $(\cat K/I_0)$-diagram $Q(K/I_0,1)$, where
\[
K/I_0:=\lb I\setminus I_0|\ I\in K\rb,
\]
denotes the \textit{excision of the simplicial complex} $K$ along $I_0$ \cite{bu-pa-15}. By the definition, one has $D_1=\alpha^* D_2$, where
\[
\alpha\colon \cat K\to \cat (K/I_0),\ I\mapsto I\setminus I_0,
\]
is the poset morphism and $\alpha^* D_2$ is the pullback of the diagram $D_2$ along $\alpha$. In general, the homotopy colimits of $D_1$ and of $D_2$ are not homotopy equivalent, as the example of the cone $K=\cone_{m} \tilde{K}$ with apex at $m\in [m]$ over $\tilde{K}$ on $[m-1]$, and $H_d=G_1^{\lb m\rb}$ shows. Indeed, the join $(D^d,S^{d-1})^{\tilde{K}} * pt\simeq \hoc D_1$ is contractible, whereas $\hoc D_2\simeq (D^d,S^{d-1})^{\tilde{K}}$ is not contractible, in general.
\end{construction}

The following lemma is straight-forward to prove.

\begin{lemma}\label{lm:coleq}
Let $D$ be a $(\cat K)$-diagram of closed subspaces in a topological space $X$ and let $G(I)$ be a $(\cat K)$-diagram of closed subgroups in a group $G$. Then one has the following equality
\[
\col (D(I)\times G/G(I))=((\col D(I))\times G)/\sim,
\]
\[
(x,g)\sim (x',g') \Leftrightarrow x=x'\in D(I)\ \&\ g^{-1}g'\in G(I).
\]
\end{lemma}

The following corollary was previously known in the case of partial quotients (see \cite[\S 2]{fr-10}, \cite[\S 4]{fr-21}). (For another variant of the corresponding generalization see \cite{ba-be-co-hi-17}).

\begin{corollary}\label{cor:djcon}
The quotient $(D^d,S^{d-1})^K/H_d$ is homeomorphic to the quotient
\[
\bigl((\col (D^d)^I/(H_d\cap G_d^I))\times (G_d^m/H_d)\bigr)/\sim,
\]
\[
(x,g)\sim (x',g') \Leftrightarrow x=x'\in (D^d)^I/(H_d\cap G_d^I)\ \&\ g^{-1}g'\in G_d^I/(H_d\cap G_d^I).
\]
\end{corollary}
\begin{proof}
The claim follows directly from Corollary \ref{cor:dirhomeo} and Lemma \ref{lm:coleq}.
\end{proof}

Let $K=\partial P^*$ be the dual simplicial sphere to a simple polytope $P^n\subset \R^n$. Suppose that the action of $H_d$ on $(D^d,S^{d-1})^K$ is free and that $H_d\cong G_d^{m-n}$ holds. The quotient $(D^d,S^{d-1})^K/H_d$ is called a \textit{small cover} for $d=1$ and a \textit{quasitoric manifold} for $d=2$, respectively (\cite{da-ja-91}). Let $G_d^n=G^m_d/H_d$ be the real or complex torus for $d=1$ or $d=2$, respectively. Consider the $(\cat K)$-diagram $G_d^n/p(G_d^I)$ (see Proposition \ref{pr:quotdiag} below for the precise definition), where $p\colon G_d^m\to G_d^n$ is the natural quotient homomorphism.

The following theorem was first proved for toric varieties in \cite{we-zi-zi-99} and then generalized to quasitoric manifolds in \cite{pa-ra-08}. We deduce it from Theorem \ref{thm:quothocolim} below.

\begin{theorem}\label{thm:qtorichoc}(\cite{we-zi-zi-99}, \cite{pa-ra-08})
Let $(K,H_d)$ be as above. Then there is the homotopy equivalence of spaces
\[
(D^d,S^{d-1})^K/H_d\simeq\hoc\ G^n_d/p(G_d^{I}).
\]
\end{theorem}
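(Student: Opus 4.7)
The plan is to deduce this directly from Theorem \ref{thm:quothocolim}, reducing everything to an isomorphism of diagrams. By Theorem \ref{thm:quothocolim} applied to $(K,H_d)$ we already have
\[
(D^d,S^{d-1})^K/H_d \simeq \hoc\, G_d^m/(G_d^I\cdot H_d).
\]
So it suffices to exhibit an isomorphism of $(\cat K)$-diagrams between $G_d^m/(G_d^I\cdot H_d)$ and $G_d^n/p(G_d^I)$; a levelwise isomorphism of diagrams of topological groups induces a homeomorphism of their homotopy colimits, and combining the two gives the desired equivalence.

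The construction of this isomorphism is a straightforward invocation of the third isomorphism theorem. For each $I\in\cat K$, since $H_d\subseteq G_d^I\cdot H_d\subseteq G_d^m$ and $G_d^n=G_d^m/H_d$ via $p$, one has a canonical isomorphism
\[
G_d^m/(G_d^I\cdot H_d) \;\xrightarrow{\ \cong\ }\; (G_d^m/H_d)\bigl/\bigl((G_d^I\cdot H_d)/H_d\bigr) \;=\; G_d^n/p(G_d^I),
\]
sending the coset of $g\in G_d^m$ to the coset of $p(g)\in G_d^n$, and using the equality $p(G_d^I)=(G_d^I\cdot H_d)/H_d$ inside $G_d^n$. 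Thus on the level of objects in each degree $I$ we obtain an isomorphism of topological groups.

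Next I would verify that these isomorphisms are natural in $I$. For any $I\subseteq J\in\cat K$, the arrow on the left is the natural quotient map $G_d^m/(G_d^I\cdot H_d)\to G_d^m/(G_d^J\cdot H_d)$ coming from the inclusion $G_d^I\cdot H_d\subseteq G_d^J\cdot H_d$; the arrow on the right is the quotient $G_d^n/p(G_d^I)\to G_d^n/p(G_d^J)$ coming from $p(G_d^I)\subseteq p(G_d^J)$. Both arrows are induced from $p$ modulo the relevant subgroups, so the resulting square commutes. Hence we have a genuine isomorphism of $(\cat K)$-diagrams in $\Top$.

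The only thing that is not purely formal is noticing that Theorem \ref{thm:quothocolim} applies: this requires $H_d$ to be a closed subgroup of $G_d^m$, which is built into the hypothesis that $H_d\cong G_d^{m-n}$ acts freely on $(D^d,S^{d-1})^K$. Since homotopy colimits respect isomorphisms of diagrams, combining the two gives
\[
(D^d,S^{d-1})^K/H_d \simeq \hoc\, G_d^m/(G_d^I\cdot H_d) \cong \hoc\, G_d^n/p(G_d^I),
\]
which is the claim. There is no real obstacle here; the statement is essentially a repackaging of Theorem \ref{thm:quothocolim} under the identification $G_d^n=G_d^m/H_d$, and the only point demanding a line of justification is the naturality check above.
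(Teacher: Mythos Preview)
Your proof is correct and follows essentially the same route as the paper: apply Theorem~\ref{thm:quothocolim} and then identify the diagram $Q_d = G_d^m/(G_d^I\cdot H_d)$ with $G_d^n/p(G_d^I)$ as $(\cat K)$-diagrams. The only cosmetic difference is that the paper invokes the freeness hypothesis to get $G_d^I\cap H_d=1$ and then reads the identification off the simplified form of diagram~\eqref{eq:maincommdiag}, whereas you appeal directly to the third isomorphism theorem; your version is slightly cleaner in that it does not actually require freeness for this step.
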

\begin{proof}
By the condition on $H_d$, one has $G_d^I\cap H_d =1$ for any $I\in K$. Then the commutative diagram \eqref{eq:maincommdiag} takes form
\begin{equation}\label{eq:anotherexseq}
\begin{tikzcd}
1 \arrow{r} & G_d^I \arrow{r}{p|_{G_d^I}}\arrow{d} & G_d^m/H_d \arrow{r}\arrow[equal]{d} & G_d^{m}/( G_d^I\cdot H_d) \arrow{r}\arrow{d} & 1\\
1 \arrow{r} & G_d^J \arrow{r}{p|_{G_d^J}} & G_d^m/H_d \arrow{r} & G_d^{m}/( G_d^J\cdot H_d) \arrow{r} & 1,
\end{tikzcd}
\end{equation}
for $I\subseteq J\in \cat K$. Hence, \eqref{eq:anotherexseq} defines the isomorphism $Q_d\to G^n_d/p(G_d^{I})$ of $(\cat K)$-diagrams. Now the desired homotopy equivalence follows by Theorem \ref{thm:quothocolim}.
\end{proof}

\begin{remark}\label{rem:dj}
In the case of a small cover $d=1$ the homeomorphism from Corollary \ref{cor:djcon} is similar to the Davis-Januszkiewicz construction \cite{da-ja-91}, because $(D^1,pt)^K$ coincides
with the cubical subdivision of the simple polytope $P\subset\R^n$, where $K=\partial P^{*}$. By relaxing the condition on $H_d\cong G_d^{m-n}$ to the condition of only finite stabilizers of the respective action on $(D^d,S^{d-1})^K$ one obtains the homeomorphism similar to the one that was proved for quasitoric orbifolds in \cite{po-sa-10}. For general partial quotients a similar homeomorphism was proved in \cite{fr-10}, \cite{fr-21} (also see \cite{ba-be-co-hi-17}). Notice that the homeomorphism from Corollary \ref{cor:djcon} is in general different from the Davis-Januszkiewicz construction \cite{da-ja-91}. For a partial quotient the natural projection of orbit spaces
\[
(D^d,S^{d-1})^I/H_d\to (D^d,S^{d-1})^I/G_d^m=(I^1,1)^I,\ I\in \cat K,
\]
is a trivial $(G_{d}^m/(H_d\cdot G_d^I))$-bundle, where $I^1=[0,1]\subset \R$. This observation allows to recover the genuine homeomorphism from Davis-Januszkiewicz construction.
\end{remark}

\section{Equivariant homotopy colimits and G-CW-complexes}\label{sec:gcw}
In this section we prove the strengthening of Theorem \ref{thm:quothocolim} in the equivariant setting leading to G-CW-approximation for quotients of moment-angle complexes.

Let $G$ be a topological group.

\begin{definition}\cite{wa-80}
The equivariant union $X=\underset{n\in\Z_{\geq 0}}{\col} X_n$ of $G$-spaces $X_n$ is called a \textit{$G$-complex} if there is a pushout
\[
X_{n+1}=X_n\bigcup_{\varphi_n} \biggl(\bigsqcup_{\alpha\in A_n} D^{n_\alpha}\times G/H_{\alpha}\biggr),
\]
of $G$-spaces with the natural left $G$-action (left $G$-action on $G/H_{\alpha}$ and trivial action on $D^{n_{\alpha}}$), where
\[
\varphi_n\colon \bigsqcup_{\alpha\in A_n} S^{n_\alpha-1}\times G/H_{\alpha}\to X_n,
\]
is $G$-equivariant, $D^{n_\alpha}$ is an $n_\alpha$-dimensional disk and $\lb H_{\alpha}\rb_{\alpha\in A_n}$ is a collection of closed subgroups in $G$. If $n_\alpha=n$ holds for any $\alpha\in A_n$, then $X$ is called a \textit{$G$-CW-complex}.
\end{definition}

The category $GTop$ of $G$-spaces (objects) and $G$-equivariant maps between these spaces (morphisms) has the Quillen model structure given by $G$-equivariant weak equivalences, $G$-equivariant Serre fibrations and $G$-equivariant retracts of $G$-CW-complexes \cite{bo-ka-72}. Since $\cat K$ is a Reedy category \cite{dw-sp-95}, the category $GTop^{\cat K}$ has the Reedy model structure given by objectwise weak equivalences and fibrations, and cofibrations are given by morphisms $D\to E$ such that $D(I)\sqcup_{L_{D}(I)}L_{E}(I)\to E(I)$ is a cofibration for any $I\in\cat K$, where
\[
L_{D}(I):=\underset{(\cat K)_{<I}}{\hoc} D\to D(I),
\]
is the natural map for $D$ \cite{dw-sp-95}, \cite{bu-pa-15}.

The following proposition is straight-forward to prove.

\begin{proposition}\label{pr:quotdiag}
Let $G_{I}$ be a collection of closed subgroups in $G$ such that $G_{I}\subseteq G_{J}$ holds for any $I\subseteq J\in \cat K$. Define the $(\cat K)$-diagram $D$
\[
D(I):=G/G_{I},\ D(I\to J)\colon G/G_{I}\to G/G_{J},
\]
where $D(I\to J)$ is the natural projection. Then $D$ is fibrant in $GTop^{\cat K}$ and its homotopy colimit in $GTop$ is given by
\begin{equation}\label{eq:hoc}
\hoc\ D=\bigl(\bigsqcup_{I\in \cat K} D(I)\times |K_{\geq I}|\bigr)/\sim,
\end{equation}
where by definition $(d,\Inc_{I\to I'}(I'))\sim (D(I,I')(d),I')$ and $\Inc_{I\to I'}\colon |K_{\geq I'}|\to |K_{\geq I}|$ is the natural embedding. Furthermore, the decomposition \eqref{eq:hoc} endows $\hoc\ D$ with the structure of a $G$-CW-complex.
\end{proposition}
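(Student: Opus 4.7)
The plan is to prove the three claims in parallel by induction on the poset $\cat K$ (ordered by $|I|$), which is a direct Reedy category. At each stage, we simultaneously establish that $D$ restricted to $(\cat K)_{\leq I}$ is Reedy cofibrant, compute its homotopy colimit as the claimed quotient space, and equip that space with a natural $G$-CW-structure.

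The base case $|I|=0$ is immediate: $D(\varnothing)=G/G_{\varnothing}$ carries the trivial orbit-type $G$-CW-structure, and $|K_{\geq \varnothing}|$ is contractible. For the inductive step at $I$ with $|I|=n$, the inductive hypothesis supplies a $G$-CW-structure on the already-built portion of $\hoc D$ and, in particular, identifies the latching object $L_{D}(I)$ with the ordinary equivariant colimit $\col_{J\subsetneq I} G/G_{J}$. The latching map $L_{D}(I)\to G/G_{I}$ is then a $G$-cofibration by Waner's theory \cite{wa-80}, since it is assembled from the equivariant quotient projections $G/G_{J}\to G/G_{I}$ and respects the orbit-type decomposition. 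This establishes Reedy cofibrancy in $(GTop)^{\cat K}$ once the induction is complete.

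For the identification with the claimed quotient, I will invoke the standard Bousfield-Kan presentation of the homotopy colimit of a Reedy-cofibrant diagram over a poset, which in our setting takes the form
\[
\hoc D = \Bigl(\bigsqcup_{I\in \cat K} D(I)\times |K_{\geq I}|\Bigr)/\sim,
\]
with the identification relation arising from the natural inclusions $\Inc_{I\to I'}\colon |K_{\geq I'}|\hookrightarrow |K_{\geq I}|$ of order complexes. Each open simplex of $|K_{\geq I_{0}}|$ corresponding to a chain $I_{0}\subsetneq I_{1}\subsetneq\cdots\subsetneq I_{n}$ in $\cat K$ then contributes a single equivariant cell $D^{n}\times G/G_{I_{0}}$ to $\hoc D$, with attaching maps combining the simplicial boundary of the chain with the quotient maps $G/G_{I_{0}}\to G/G_{I_{j}}$. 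The main anticipated obstacle is the combinatorial book-keeping: verifying that the gluing relation $(d,\Inc_{I\to I'}(x))\sim(D(I,I')(d),x)$ reproduces the Bousfield-Kan identifications on the nose and that each attaching map lands in the lower skeleton. Once this is checked the proposition follows, since equivariant cofibrancy, the coend formula, and the skeletal filtration are then all in place.
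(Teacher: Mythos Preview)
The paper itself supplies no proof, stating only that the proposition is ``straight-forward to prove.'' Your outline for the formula \eqref{eq:hoc} and for the $G$-CW-structure is sound: the Bousfield--Kan simplicial bar construction over the direct poset $\cat K$ yields precisely this quotient, and its nondegenerate simplices $\Delta^n(I_0\subsetneq\cdots\subsetneq I_n)\times G/G_{I_0}$ are equivariant cells with equivariant attaching maps (the face that drops $I_0$ uses the projection $G/G_{I_0}\to G/G_{I_1}$, which is $G$-equivariant). These two claims are indeed routine.

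Your argument for Reedy cofibrancy, however, has a genuine gap. You claim the latching map $L_D(I)\to G/G_I$ is a $G$-cofibration ``since it is assembled from the equivariant quotient projections $G/G_J\to G/G_I$ and respects the orbit-type decomposition.'' But quotient projections are \emph{surjections}, and being assembled from surjections is no reason for a map to be a cofibration. Concretely, take $|I|=1$: the category $(\cat K)_{<I}$ is the single object $\varnothing$, so $L_D(I)=D(\varnothing)=G/G_{\varnothing}$ and the latching map is the quotient $G/G_{\varnothing}\twoheadrightarrow G/G_I$. In the Quillen model structure on $GTop$ (cofibrations are retracts of relative $G$-CW inclusions, hence in particular injective) this is not a cofibration unless $G_{\varnothing}=G_I$. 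The phrase ``respects the orbit-type decomposition'' does not furnish the missing lifting property. Note also that \eqref{eq:hoc} is the Bousfield--Kan model and computes the homotopy colimit correctly for \emph{any} diagram, so the validity of the other two claims cannot be fed back into cofibrancy; the three claims are not as tightly intertwined as your inductive scheme presumes, and the inductive hypothesis on the $G$-CW-structure of the partial hocolim says nothing about the latching map of $D$ itself.
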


Let $G=G_d^m/H_d$. The natural $G$-action on $(D^d,S^{d-1})^{K}/H_d$ allows to consider the $(\cat K)$-diagram $(D^d,S^{d-1})^{I}/H_d$ as a $(\cat K)$-diagram in $GTop$.

\begin{theorem}\label{thm:equivquothocolim}
For any closed subgroup $H_d$ in $G_d^m$ and any simplicial complex $K$ on $[m]$ there is the $G_d^m/H_d$-equivariant homotopy equivalence
\[
(D^d,S^{d-1})^{K}/H_d\simeq \hoc\ G_d^m/( G_d^I\cdot H_d).
\]
\end{theorem}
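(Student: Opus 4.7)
The plan is to rerun the proof of Theorem \ref{thm:quothocolim} essentially verbatim, but in the Quillen/Reedy model structure on $(GTop)^{\cat K}$ with $G := G_d^m/H_d$ (as recalled in the opening paragraphs of this section) instead of in $Top^{\cat K}$. Every geometric object used in the earlier argument is already equipped with a canonical $G$-action, so the real content is to promote each ingredient from the non-equivariant to the equivariant setting: equivariance of the comparison morphism of diagrams, equivariant Reedy cofibrancy of both diagrams, and equivariant weak equivalence of each comparison map.

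First, I would observe that the morphism $\widetilde{\pi}_I \colon (D^d,S^{d-1})^I/H_d \to G_d^m/(G_d^I\cdot H_d)$ of Corollary \ref{cor:diagmorph} is $G$-equivariant: the equivariance of $\pi_I$ with respect to $\varphi_I$ recorded before Proposition \ref{pr:fbindiag} descends to the quotients by $H_d$ on the source and $H_d \cdot G_d^I$ on the target, both of which are covered by the $G$-action. Hence the $\widetilde{\pi}_I$'s assemble into a morphism of $(\cat K)$-diagrams in $GTop$. Next, the target diagram $Q_d(I) = G_d^m/(G_d^I \cdot H_d)$, viewed as a diagram of $G$-spaces, is exactly of the form treated in Proposition \ref{pr:quotdiag} (from this section) applied to $G = G_d^m/H_d$ with the closed subgroups $(G_d^I \cdot H_d)/H_d \subseteq G$; therefore it is Reedy cofibrant in $(GTop)^{\cat K}$. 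For the source diagram $(D^d,S^{d-1})^I/H_d$, each arrow is a closed $G$-equivariant embedding, so by the equivariant analogue of \cite[Lemma 4.10]{we-zi-zi-99} the latching map $L_D(I) \to D(I)$ is a $G$-equivariant closed cofibration for every $I \in \cat K$, giving Reedy cofibrancy in $(GTop)^{\cat K}$.

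I would then upgrade Proposition \ref{pr:fbindiag} to the equivariant setting. The trivialization
\[
(D^d,S^{d-1})^I/H_d \cong (G_d^m/H_d) \times_{G_d^I/(H_d \cap G_d^I)} (D^d)^I/(H_d \cap G_d^I)
\]
built there is $G$-equivariant with $G$ acting by left translation on the first factor, because the homeomorphism $\Psi$ of \eqref{eq:defpsi} was defined so as to respect left translation by $G_d^m/H_d$. The radial contraction $x \mapsto (1-t)x$ used in Proposition \ref{pr:fibercontr} is $G_d^I/(H_d \cap G_d^I)$-equivariant, so it descends through the associated-bundle construction to a $G$-equivariant strong deformation retraction of the total space onto the zero section, which is identified with the base $G_d^m/(G_d^I\cdot H_d)$. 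Thus each $\widetilde{\pi}_I$ is a $G$-equivariant homotopy equivalence between $G$-cofibrant spaces.

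Finally, the equivariant Homotopy and Projection Lemmas in the Reedy model structure on $(GTop)^{\cat K}$ (formal consequences of the setup recalled above) yield, in $GTop$,
\[
(D^d,S^{d-1})^K/H_d = \col\ (D^d,S^{d-1})^I/H_d \simeq \hoc\ (D^d,S^{d-1})^I/H_d \simeq \hoc\ G_d^m/(G_d^I\cdot H_d),
\]
which is the required $G$-equivariant homotopy equivalence. The step I expect to be the main obstacle is the equivariant Reedy cofibrancy of the source diagram: it requires that the closed embeddings $(D^d,S^{d-1})^I/H_d \hookrightarrow (D^d,S^{d-1})^J/H_d$ admit $G$-equivariant tubular neighborhoods, which is not formal. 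This can be handled via the slice theorem for compact Lie group actions applied to $G = G_d^m/H_d$ (which is a compact Lie group by Proposition \ref{pr:tori}), but it has to be spelled out carefully since the apparatus of \cite{we-zi-zi-99} is stated non-equivariantly.
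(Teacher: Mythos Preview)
Your proposal is correct and follows exactly the route the paper takes: the paper's proof is a single sentence stating that the result follows from Theorem~\ref{thm:quothocolim} by the standard properties of homotopy colimits together with the $G$-equivariance of all arrows in~\eqref{eq:fibemb}. You have simply unpacked that sentence into its constituent verifications, and the obstacle you flag---equivariant Reedy cofibrancy of the source diagram---is a genuine technical point that the paper leaves implicit under ``standard properties.''
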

\begin{proof}
Follows from Theorem \ref{thm:quothocolim} by the standard properties of a homotopy colimit by using $G$-equivariance of all arrows in \eqref{eq:fibemb}.
\end{proof}

\begin{remark}
Theorem \ref{thm:equivquothocolim} gives an explicit $G$-CW-approximation of the quotient $(D^d,S^{d-1})^{K}/H_d$ with cells
\[
\Delta^{s}(I_0\supset\cdots\supset I_s)\times G_d^m/( H_d\cdot G_d^I),\ I_0\in \cat K,\ s\geq 0.
\]
In the case of partial quotients this decomposition was previously known, see \cite[\S 2]{fr-10}. The homeomorphism from Corollary \ref{cor:djcon} is easily shown to be $G$-equivariant.
\end{remark}

\section{Equivariant cohomology of quotients for moment-angle complexes}\label{sec:borel}

In this section we study the formality problem for the Borel construction of the natural $L_d$-action on the quotient $(D^d,S^{d-1})^K/H_d$ of the moment-angle complex for any closed subgroup $H_d$ satisfying the Condition \ref{cond:1}. This condition is satisfied for any freely acting subgroup $H_d$ on the corresponding moment-angle complex. Notice that the formality of the corresponding Borel space for any freely acting subgroup (that is, for any partial quotient) was proved in \cite{no-ra-05}. We follow the ideas of \cite{no-ra-05} in our proof of formality for a wider class of actions.


\subsection{On the Borel construction}\label{ssec:borelc}

The classifying space functor $B\colon TGrp\to Top$ (for example, see \cite{pa-ra-08}) induces the functor $TGrp^{\cat K}\to Top^{\cat K}$ which we denote by $B$ by a slight abuse of the notation. Thus, there are the well-defined $(\cat K)$-diagrams $BS_d$, $\kappa(BL_d)$ and $BQ_d$. Consider the $(\cat K)$-diagram $EL_d\times_{L_d} Q_d$ given by applying the (functorial) Borel construction to the diagram $Q_d$ of $L_d$-spaces.

\begin{proposition}\label{pr:equivheq}
There is the $L_d$-equivariant homotopy equivalence of $(\cat K)$-diagrams
\[
EL_d\times_{L_d} Q_d\to BS_d.
\]
\end{proposition}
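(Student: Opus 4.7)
The plan is to define the map objectwise using the standard identification between the Borel construction of a homogeneous space and the classifying space of the stabilizer, and then to verify naturality in $I\in \cat K$ and $L_d$-equivariance using that $S_d(I)$ is normal in the abelian torus $L_d$.

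First, for each $I\in \cat K$, Proposition \ref{pr:bigdiag} gives the short exact sequence
\[
1 \to S_d(I) \to L_d \to Q_d(I)\to 1,
\]
so $Q_d(I)\cong L_d/S_d(I)$ as left $L_d$-spaces. Since $EL_d$ is a contractible free $L_d$-space, it is in particular a contractible free $S_d(I)$-space. One obtains the natural homeomorphism
\[
EL_d\times_{L_d} Q_d(I)\;=\;EL_d\times_{L_d}(L_d/S_d(I))\;\xrightarrow{\;\cong\;}\;EL_d/S_d(I),
\]
sending $[e,gS_d(I)]$ to $[eg]_{S_d(I)}$. Because $EL_d$ is a universal $S_d(I)$-bundle, the quotient $EL_d/S_d(I)$ is a model of $BS_d(I)$, and comparing it to the chosen functorial model yields the objectwise weak equivalence $\Phi_I\colon EL_d\times_{L_d} Q_d(I)\to BS_d(I)$.

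Second, for naturality, given $I\subseteq J$ in $\cat K$, the inclusion $S_d(I)\hookrightarrow S_d(J)$ induces the quotient map $EL_d/S_d(I)\to EL_d/S_d(J)$, which agrees on one hand with the arrow of $EL_d\times_{L_d} Q_d$ induced by the surjection $L_d/S_d(I)\to L_d/S_d(J)$ of Definition \ref{defn:diags}, and on the other hand with the arrow $B(S_d(I))\to B(S_d(J))$ of $BS_d$. Hence the maps $\Phi_I$ form a morphism of $(\cat K)$-diagrams.

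Third, for $L_d$-equivariance, observe that since $L_d$ is abelian, $S_d(I)$ is normal in $L_d$, so $Q_d(I)=L_d/S_d(I)$ is itself a topological group acting on $EL_d/S_d(I)$ by right translation. Via the natural epimorphism $L_d\to Q_d(I)$ this promotes $BS_d(I)$ to an $L_d$-space, and the identification above intertwines this action with the residual $L_d$-action on $EL_d\times_{L_d} Q_d(I)$ coming from left translation on the fibre $Q_d(I)$; naturality of these actions in $I$ is immediate from the compatibility of the epimorphisms $L_d\to Q_d(I)\to Q_d(J)$. The main technical obstacle is bookkeeping: one must fix functorial models of $B$ and $E$ (e.g.\ the bar construction) so that all the identifications above are strictly natural in $I$ and strictly $L_d$-equivariant, rather than merely commuting up to homotopy; no conceptual difficulty arises once the models are pinned down.
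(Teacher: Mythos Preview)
Your argument is correct and follows exactly the route the paper takes: the paper's proof is a one-line appeal to the fact that $L_d$ acts transitively on $Q_d(I)$ with kernel $S_d(I)$ (citing \eqref{eq:maincommdiag}), which is precisely the identification $Q_d(I)\cong L_d/S_d(I)$ and the standard equivalence $EL_d\times_{L_d}(L_d/S_d(I))\simeq BS_d(I)$ that you have unpacked in detail. Your additional care with naturality in $I$, equivariance, and the choice of functorial models for $B$ and $E$ simply fills in what the paper leaves implicit.
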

\begin{proof}
Follows directly from the fact that $L_d$ acts on the space $Q_d(I)=G_d^{m}/( H_d\cdot G_d^I)$ transitively with the kernel $S_d=G_d^I/(G_d^I\cap H_d)$ by \eqref{eq:maincommdiag}.
\end{proof}

Thus, the Borel construction for the natural $L_d$-action on $(D^d,S^{d-1})^K/H_d$ takes the following form.

\begin{corollary}\label{cor:borelconstr}
For $G=L_d$, there is the following fibration in $GTop^{\cat K}$:
\begin{equation}\label{eq:sfd}
\begin{tikzcd}
Q_d \arrow{r} & BS_d \arrow{r} & \kappa(BL_d)
\end{tikzcd}.
\end{equation}
\end{corollary}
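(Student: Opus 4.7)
The plan is to combine the universal Borel fibration with the levelwise identification furnished by Proposition \ref{pr:equivheq}. First, I would apply the Borel construction $EL_d\times_{L_d}(-)$ objectwise to the $(\cat K)$-diagram $Q_d$ of $L_d$-spaces. For each $I\in\cat K$, the space $Q_d(I)=G_d^m/(G_d^I\cdot H_d)$ is a homogeneous $L_d$-space (by the right column of \eqref{eq:maincommdiag}), so the standard Borel fibration
\[
Q_d(I)\longrightarrow EL_d\times_{L_d} Q_d(I)\longrightarrow BL_d
\]
is a Serre fibration in $GTop$. Since the Borel construction is functorial and the morphisms of the diagram $Q_d$ are $L_d$-equivariant, these fibrations fit together into a sequence of $(\cat K)$-diagrams
\[
Q_d \longrightarrow EL_d\times_{L_d} Q_d \longrightarrow \kappa(BL_d)
\]
in $(GTop)^{\cat K}$, the projection onto the constant diagram $\kappa(BL_d)$ being an objectwise Serre fibration.

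Second, I would invoke Proposition \ref{pr:equivheq} to replace the middle term $EL_d\times_{L_d} Q_d$ by $BS_d$ up to $L_d$-equivariant levelwise homotopy equivalence of $(\cat K)$-diagrams, thereby producing the asserted sequence $Q_d\to BS_d\to \kappa(BL_d)$. The main point to check is that, under this replacement, the projection onto $\kappa(BL_d)$ coincides with the natural map induced objectwise by the classifying maps of the subgroup inclusions $S_d(I)=G_d^I/(G_d^I\cap H_d)\hookrightarrow L_d$. This follows from functoriality of $B$ applied to the column short exact sequences of \eqref{eq:maincommdiag}: objectwise the Borel construction $EL_d\times_{L_d}(L_d/S_d(I))$ is canonically homeomorphic to $EL_d/S_d(I)$, which is a model for $BS_d(I)$, and the projection to $BL_d$ corresponds to the map induced by $S_d(I)\hookrightarrow L_d$. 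Compatibility of these identifications with the $(\cat K)$-diagram structure is guaranteed by the naturality furnished in Proposition \ref{pr:bigdiag}, so no extra check beyond naturality of $B$ is needed. The slightly delicate point — and the only real obstacle — is confirming that the levelwise homotopy equivalence of Proposition \ref{pr:equivheq} really does transport the fibration structure and not merely the homotopy type; this is immediate from the fact that $BS_d(I)\to BL_d$ is itself a fibration with fibre $L_d/S_d(I)=Q_d(I)$, so the two sequences agree up to equivariant weak equivalence over $\kappa(BL_d)$.
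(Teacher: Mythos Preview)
Your proposal is correct and follows exactly the approach implicit in the paper: the corollary is stated there without proof as an immediate consequence of Proposition~\ref{pr:equivheq}, and your argument simply spells out the standard Borel fibration $Q_d\to EL_d\times_{L_d}Q_d\to\kappa(BL_d)$ and then replaces the middle term via that proposition. The extra care you take in verifying compatibility of the identifications is fine but more detail than the paper deems necessary.
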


\begin{theorem}\label{thm:eqcoh}
There is the following homotopy equivalence
\[
EL_d\times_{L_d} (D^d,S^{d-1})^K/H_d\simeq \col B(G_d^I/(G_d^I\cap H_d)),
\]
for the Borel construction of the $L_d$-action on the quotient $(D^d,S^{d-1})^K/H_d$ of the moment-angle complex.
\end{theorem}
\begin{proof}
Follows directly from Proposition \ref{pr:equivheq} by the Homotopy Lemma of \cite{we-zi-zi-99}.
\end{proof}

\begin{example}\label{ex:sr}
Suppose that the $H_d$-action on $(D^d,S^{d-1})^K$ is free. Then it follows from the standard properties of equivariant cohomology that the cohomology ring isomorphism (with $\Z$-coefficients)
\[
H^{*}_{L_d}((D^d,S^{d-1})^K/H_d)\cong H^{*}_{G_d^m}((D^d,S^{d-1})^K),
\]
takes place. On the other hand, freeness of the action implies that $H_d\cap G_d^I$ is a trivial group for any $I\in \cat K$. Hence, the colimit of $BS_d$ is the Davis-Januszkiewicz space $DJ(K)=(\F_d P^{\infty},pt)^K$ whose cohomology ring with $R_d$-coefficients is isomorphic to the Stanley-Reisner ring $R_d[K]$ \cite{da-ja-91}. Thus Theorem \ref{thm:eqcoh} gives a correct answer in this case by \cite{bi-dc-pr-90} (for $d=1$ we take reduction of integral cohomology coefficients modulo two).
\end{example}

As an application of Corollary \ref{cor:borelconstr} and Theorem \ref{thm:eqcoh} we describe the Borel construction for the quotient by any coordinate subgroup in $G_d^m$ (with not necessarily free action) below.

\begin{corollary}\label{cor:coordsubq}
Let $H_d=G_d^{I_0}$ for an arbitrary fixed $I_0\subseteq [m]$. Then for any complex $K$ the Borel construction of $L_d$-action on $(D^d,S^{d-1})^K/H_d$ is homotopy equivalent to the real or complex Davis-Januszkiewicz space, $\R DJ(K/I_0)$ or $DJ(K/I_0)$, for $d=1$ or $d=2$, respectively. Furthermore, one has the ring isomorphism:
\[
H^{*}_{L_d}((D^d,S^{d-1})^K/H_d;\ R_d)\cong R_d[K/I_0].
\]
\end{corollary}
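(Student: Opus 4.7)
The plan is to specialize Theorem~\ref{thm:eqcoh} and Proposition~\ref{pr:equivheq} at $H_d=G_d^{I_0}$ and then identify the resulting colimit of classifying spaces with $DJ(K/I_0)$ via a cofinality argument for the natural poset morphism $\alpha\colon\cat K\to\cat(K/I_0)$, $I\mapsto I\setminus I_0$.

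First I would compute $S_d$ explicitly in this setting. For $H_d=G_d^{I_0}$ one has $G_d^I\cap H_d=G_d^{I\cap I_0}$, whence $S_d(I)=G_d^I/(G_d^I\cap H_d)\cong G_d^{I\setminus I_0}$, with structure maps given by the coordinate inclusions. Applying the classifying space functor $B$ coordinatewise, $BS_d(I)\cong (BG_d)^{I\setminus I_0}$, and the diagram $BS_d$ on $\cat K$ is the pullback $\alpha^*D$ of the polyhedral product diagram $D\colon\cat(K/I_0)\to\Top$, $J\mapsto (BG_d)^J$, whose colimit is by construction $DJ(K/I_0)$ (respectively $\R DJ(K/I_0)$ for $d=1$) and agrees with its homotopy colimit by cofibrancy of polyhedral product diagrams.

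Next I would verify that $\alpha$ is homotopy cofinal, which gives $\hoc BS_d=\hoc\alpha^*D\simeq\hoc D\simeq DJ(K/I_0)$. For $J\in\cat(K/I_0)$, the comma category $(J\downarrow\alpha)$ consists of all $I\in\cat K$ with $J\subseteq I\setminus I_0$; since $J\cap I_0=\varnothing$ by definition of $K/I_0$, this condition is equivalent to $J\subseteq I$. The key point is that $J$ itself lies in $\cat K$: writing $J=\sigma\setminus I_0$ with $\sigma\in K$, downward closure of $K$ applied to $J\subseteq\sigma$ yields $J\in K$. Hence $J$ is an initial object of $(J\downarrow\alpha)$, making the latter contractible.

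Combining these with Proposition~\ref{pr:equivheq} and Theorem~\ref{thm:equivquothocolim}, the Borel construction $EL_d\times_{L_d}\bigl((D^d,S^{d-1})^K/H_d\bigr)\simeq\hoc BS_d$ is homotopy equivalent to $DJ(K/I_0)$ (or $\R DJ(K/I_0)$ when $d=1$). The ring isomorphism $H^*_{L_d}\bigl((D^d,S^{d-1})^K/H_d;R_d\bigr)\cong R_d[K/I_0]$ then follows from Theorem~\ref{thm:eqcoh} together with the classical Davis--Januszkiewicz computation $H^*(DJ(L);R_d)\cong R_d[L]$ (cf.\ Example~\ref{ex:sr}). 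I expect the only substantive point to be the cofinality step; everything else amounts to unwinding definitions and invoking the results already established in the section.
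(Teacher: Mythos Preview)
Your proposal is correct and follows essentially the same route as the paper: both identify $S_d(I)\cong G_d^{I\setminus I_0}$ via $G_d^I\cap G_d^{I_0}=G_d^{I\cap I_0}$ and then reindex the resulting diagram from $\cat K$ to $\cat(K/I_0)$. The only difference is in how the reindexing is justified: the paper simply asserts $\underset{\cat K}{\col}\,(BG_d)^{I\setminus I_0}=\underset{\cat(K/I_0)}{\col}\,(BG_d)^{J}$, which is immediate because both colimits are literally the same union of coordinate subspaces inside $(BG_d)^{[m]\setminus I_0}$, whereas you prove homotopy cofinality of $\alpha$ by exhibiting $J$ as an initial object of $(J\downarrow\alpha)$; since $BS_d$ is cofibrant these two arguments amount to the same thing.
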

\begin{proof}
Notice that the natural group isomorphism
\begin{equation}\label{eq:natiso}
G_d^I/(G_d^I\cap G_d^{I_0})\cong G_d^{I\setminus I_0},
\end{equation}
holds for any $I\in\cat K$. Hence, the following diagram
\[
\begin{tikzcd}
G_d^I/(G_d^I\cap G_d^{I_0}) \arrow{r} \arrow[swap]{d}{S(I\to J)} & G_d^{I\setminus I_0} \arrow{d}\\
G_d^I/(G_d^J\cap G_d^{I_0}) \arrow{r} & G_d^{J\setminus I_0},
\end{tikzcd}
\]
where both horizontal arrows are given by \eqref{eq:natiso} and the right vertical arrow is the standard embedding, is commutative for any $I\subseteq J\in\cat K$. This diagram yields the isomorphism of $(\cat K)$-diagrams $BS_d$ and $BG_d^{I\setminus I_0}$. Hence, one has
\begin{equation}\label{eq:bsdequal}
\col BS_d\cong
\underset{\cat K}{\col\ }BG_d^{I\setminus I_0}=
\underset{\cat K/I_0}{\col\ }BG_d^{I\setminus I_0}.
\end{equation}
The last equality holds because the $(\cat K)$-diagram $BG_d^{I\setminus I_0}$ is cofibrant and has a singleton (a point) as the object corresponding to any $I\subset I_0$ such that $I\in K$ holds. The last expression in \eqref{eq:bsdequal} is the real or complex Davis-Januszkiewicz space by the definition for $d=1$ or $d=2$, respectively. This proves the first claim. The second claim then follows from the first by the standard computation for moment-angle complexes, see Example \ref{ex:sr}. The proof is complete.
\end{proof}

\begin{example}
The Borel construction of $L_d$-action on $(D^d,S^{d-1})^K/H_d$ for $H_d=\lb 1\rb$ and $H_{d}=G_d^m$ is up to homotopy equivalence is the respective Davis-Januszkiewicz space $(\F_d P^{\infty},\pt)^K$ and the point, in according to Corollary \ref{cor:coordsubq}, respectively.
\end{example}

\subsection{On a certain class of quotients for moment-angle complexes}\label{ssec:certaincl}

Let $K$ be any simplicial complex on $[m]$. Let $H_d$ be any closed subgroup in $G_d^m$. We introduce the following condition on the pair $(K,H_d)$.

\begin{condition}\label{cond:1}
For any $I\subseteq J\in\cat K$ the subgroup $H_d\cap G_d^J$ maps to the subgroup $H_d\cap G_d^I$ under the natural projection $G_d^J\to G_d^I$. Or equivalently, in the following diagram there exists an upper horizontal arrow making it a commutative diagram
\begin{equation}\label{eq:condact}
\begin{tikzcd}
G_d^J\cap H_d \arrow{r}\arrow[hook]{d} & G_d^I \cap H_d \arrow[hook]{d}\\
G_d^J \arrow{r}  & G_d^I,
\end{tikzcd}
\end{equation}
where the lower horizontal arrow is the natural projection.
\end{condition}

\begin{example}
For any $K$ and any $H_d$ such that $H_d$ acts freely on $(D^d,S^{d-1})^K$ both groups in the upper row of \eqref{eq:condact} are trivial. Hence, Condition \ref{cond:1} holds for any free action of $H_d$ on $(D^d,S^{d-1})^K$.
\end{example}

\begin{example}
Let $H_d=G_d^{I_0}$ for any fixed $I_0\subseteq [m]$ and let $K$ be any simplicial complex. Notice that $G_d^I\cap H_d=G_d^{I\cap I_0}$ holds for any $I\subseteq [m]$. The natural projection $G_d^J\to G_d^I$ sends $i$-th coordinate subgroup $(G_d)_i$ to $1$ if $i\not\in I$ and acts as an identity if $i\in I$. Hence, the image of $G_d^J\cap H_d$ under this projection coincides with $G_d^I\cap H_d$. We conclude that Condition \ref{cond:1} holds for the action of a coordinate subgroup $H_d=G_d^{I_0}$ on $(D^d,S^{d-1})^K$. Notice that this action is not free, in general.
\end{example}

\begin{example}
In \cite{li-19} a certain class of closed subgroups $H_d$ in $G_d^m$ acting on $(D^d,S^{d-1})^K$ was introduced. One can check that for $d=2$, $m=2$ and $K=\Delta^1$ the natural action of the diagonal circle $H_d=S^1_d$ on $(D^d,S^{d-1})^K$ belongs to this class and does not satisfy Condition \ref{cond:1} for $I=\lb 1\rb$, $J=\lb 1,2\rb$.
\end{example}

\subsection{Twin diagrams and equivariant cohomology}\label{ssec:cont}

Let $D$ and $D^{\vee}$ be $(\cat K)$- and $(\cat^{op} K)$-diagrams with values in the category $\Top$, respectively. Suppose that $D(I)=D^{\vee}(I)$ holds for any $I\in \cat K$. Recall the following definition.

\begin{definition}\cite{no-ra-05}\label{defn:twindef}
The diagrams $D$, $D^{\vee}$ are called \textit{twin diagrams} if the identity
\[
D^{\vee}(J\to I')\circ D(I\to J)=D(I\cap I'\to I')\circ D^{\vee}(I\to I\cap I'),
\]
holds for any $I,I'\subseteq J$ in $\cat K$.
\end{definition}

Recall that any $(\cat K)$-diagram $D$ of pointed topological spaces gives rise to the Bousfield-Kan type cohomological (with integral coefficients) spectral sequence $(E_{D})_{r}^{s,t}$ (see \cite{no-ra-05})
\[
(E_{D})_{2}^{i,j}={\lim}^{i}\tilde{H}^j(D)\Rightarrow \tilde{H}^{i+j}(\hoc\ D).
\]

\begin{theorem}\cite[p.39, Lemma 3.8, p.41, Theorem 3.10]{no-ra-05}
Suppose that a $(\cat K)$-diagram $D$ is cofibrant and has a twin. Then the second page of the Bousfield-Kan spectral sequence $(E_{D})_{2}^{s,t}$ of $D$ is concentrated at $s=0$. In particular, $(E_{D})_{r}^{s,t}$ collapses at the second page $r=2$.
\end{theorem}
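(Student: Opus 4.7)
The Bousfield-Kan spectral sequence has $E_{2}^{s,t}=\lim^{s}_{\cat^{op} K}\tilde{H}^{t}(D)$, where $\tilde{H}^{t}(D)$ denotes the $(\cat^{op} K)$-diagram of abelian groups obtained by applying reduced integral cohomology objectwise to $D$. Hence the assertion reduces to proving that $\lim^{s}\tilde{H}^{t}(D)=0$ for every $s\geq 1$; once the $E_{2}$-page is concentrated in the single column $s=0$, no differential $d_{r}$ with $r\geq 2$ has room to be nonzero, and the collapse claim is automatic.

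The plan is to use the twin identity to exhibit compatible retractions for every arrow of $D$, and then assemble them into a splitting of $\tilde{H}^{t}(D)$ as a direct sum of acyclic sub-diagrams. Substituting $I'=I\subseteq J$ into the defining identity of a twin gives $D^{\vee}(J\to I)\circ D(I\to J)=\Id_{D(I)}$, so applying $\tilde{H}^{t}$ produces a section of every structure map of $\tilde{H}^{t}(D)$. The full twin identity for arbitrary pairs $I,I'\subseteq J$ then expresses a compatibility between these sections, to the effect that the induced endomorphisms of $\tilde{H}^{t}(D(J))$ built out of the composites $\tilde{H}^{t}(D(I\to J))\circ \tilde{H}^{t}(D^{\vee}(J\to I))$ form a commuting family of idempotents indexed by $I\subseteq J$.

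Using these idempotents one decomposes $\tilde{H}^{t}(D)$ as a direct sum, indexed by simplices $I_{0}\in\cat K$, of sub-diagrams each of which is supported on the upper set $\{J\in\cat K\colon J\supseteq I_{0}\}$ and takes a constant abelian-group value there, with identity arrows connecting its nontrivial entries. Sub-diagrams of this shape are the standard injective generators of the abelian category of $(\cat^{op} K)$-diagrams, so their higher derived limits vanish; summing yields $\lim^{s}\tilde{H}^{t}(D)=0$ for $s\geq 1$, as required.

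The main obstacle is promoting the pointwise idempotents to an honest diagram-level decomposition, that is, verifying that the projectors commute with every arrow of $D$ and that the summands across different simplices $J$ fit together into functorial sub-diagrams. This is precisely the combinatorial content of the cited \cite[Lemma 3.8]{no-ra-05}, whose proof uses M\"obius inversion on the face poset of $K$; I would import that argument directly, since no new ingredient is needed beyond the twin identity together with cofibrancy of $D$ (the latter ensuring that the Bousfield-Kan $E_{2}$-term really does compute $\lim^{*}\tilde{H}^{t}(D)$ rather than a derived variant).
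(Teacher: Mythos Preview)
The paper does not supply a proof of this statement at all; it is quoted verbatim as a result of Notbohm--Ray \cite{no-ra-05} and used as a black box (the subsequent Corollary and Theorem simply invoke it). So there is nothing in the paper to compare your argument against.

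That said, your sketch is an accurate high-level outline of how the cited result is actually proved in \cite{no-ra-05}: the twin identity with $I'=I$ gives sections of every structure map, the full identity makes the resulting idempotents on $\tilde{H}^{t}(D(J))$ commute, and the M\"obius-type inversion of \cite[Lemma~3.8]{no-ra-05} assembles these into a splitting of $\tilde{H}^{t}(D)$ as a sum of ``atomic'' sub-diagrams supported on principal upper sets, each of which is $\lim$-acyclic. You correctly flag that the only nontrivial step is the functoriality of the projectors, and you correctly propose to import that argument wholesale. Since the paper itself defers to \cite{no-ra-05} for the proof, your proposal is exactly in line with what the paper does.
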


\begin{corollary}\cite[p.42, Corollary 3.12]{no-ra-05}\label{cor:convcoftwin}
If $D$ is cofibrant and has a twin, then one has
\[
\tilde{H}^{i}(\col\  D)=\lim\  \tilde{H}^{i}(D),\
{\lim}^j\ \tilde{H}^{i}(D)=0,\ j>0.
\]
\end{corollary}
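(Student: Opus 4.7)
The plan is to read both identities directly off the Bousfield--Kan cohomological spectral sequence
\[
E_{2}^{s,t} = {\lim}^{s}\tilde{H}^{t}(D) \Rightarrow \tilde{H}^{s+t}(\hoc\ D),
\]
using the preceding theorem as the main input. Since $D$ is cofibrant and admits a twin, that theorem tells us that $E_{2}^{s,t}$ is concentrated in the column $s=0$ and that the spectral sequence collapses at $r=2$. The corollary is then almost entirely a matter of bookkeeping.

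First I would extract the vanishing statement. Concentration at $s=0$ means $E_{2}^{s,t} = {\lim}^{s}\tilde{H}^{t}(D) = 0$ for all $s > 0$ and all $t \ge 0$; specializing to $s=t=i$ yields ${\lim}^{i}\tilde{H}^{i}(D) = 0$ for $i > 0$, which is the second identity of the corollary. Next, collapse at $E_{2}$ together with concentration in a single column implies that the only nontrivial contribution to $\tilde{H}^{n}(\hoc\ D)$ comes from $E_{2}^{0,n}$. Since ${\lim}^{0}$ agrees with the ordinary limit, this gives
\[
\tilde{H}^{n}(\hoc\ D) \cong \lim\ \tilde{H}^{n}(D).
\]

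To finish, I would pass from the homotopy colimit to the ordinary colimit by invoking cofibrancy of $D$ once more: the Projection Lemma from \cite{we-zi-zi-99} yields a weak equivalence $\hoc\ D \simeq \col\ D$, so $\tilde{H}^{n}(\col\ D) \cong \tilde{H}^{n}(\hoc\ D) \cong \lim\ \tilde{H}^{n}(D)$, which is the first identity. I do not anticipate a genuine obstacle here, since the whole statement is a formal consequence of the preceding theorem; the only careful points are the identification ${\lim}^{0}=\lim$ on the $E_{2}$-page and the observation that cofibrancy is the hypothesis that allows replacing $\hoc$ by $\col$ on the cohomological level.
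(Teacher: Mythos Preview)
Your proposal is correct and matches the intended derivation: the paper does not supply its own proof here but simply cites \cite[Corollary 3.12]{no-ra-05}, and your argument is exactly the standard way to read the corollary off the preceding theorem via the Bousfield--Kan spectral sequence together with the Projection Lemma. There is nothing to add.
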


\begin{theorem}\label{thm:limcoh}
Suppose that $K$ and $H_d$ satisfy Condition \ref{cond:1}. Then one has
\[
\tilde{H}^{i}(\col\  D)=\lim\  \tilde{H}^{i}(D),\
{\lim}^j\  \tilde{H}^{i}(D)=0,\ j>0,
\]
where $D=S_d, BS_d$. In particular, $\tilde{H}^{odd}(\col\ BS_2;\ \Z)=0$.
\end{theorem}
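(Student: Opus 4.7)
The overall strategy is to apply Corollary \ref{cor:convcoftwin} to the diagrams $S_d$ and $BS_d$, which reduces the theorem to (i) cofibrancy and (ii) the existence of a twin. Cofibrancy of $S_d$ is already supplied by Corollary \ref{cor:seqdiag}. For $BS_d$, every arrow $BS_d(I\to J)$ is obtained by applying $B$ to the injective homomorphism $S_d(I\to J)$ of quasitori (Proposition \ref{pr:tori}); fixing a functorial model of $B$ in which $B$ of an injective group homomorphism is a closed cellular embedding, the same argument as in Corollary \ref{cor:seqdiag}, based on \cite[Lemma 4.10, p.134]{we-zi-zi-99}, yields cofibrancy of $BS_d$.

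Next I would construct the twin $(\cat^{op} K)$-diagram $S_d^{\vee}$ by setting $S_d^{\vee}(I):=S_d(I)$ on objects and, for every $I\subseteq J$ in $\cat K$, taking
\[
S_d^{\vee}(J\to I)\colon G_d^J/(G_d^J\cap H_d)\longrightarrow G_d^I/(G_d^I\cap H_d)
\]
to be the map induced by the coordinate projection $G_d^J\twoheadrightarrow G_d^I$. Condition \ref{cond:1} is precisely what makes this well-defined on $H_d$-quotients, and functoriality in $\cat^{op} K$ is clear because the composition of two coordinate projections is again a coordinate projection. Setting $BS_d^{\vee}:=B\circ S_d^{\vee}$ yields a $(\cat^{op} K)$-diagram with the same objects as $BS_d$.

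The twin identity at the level of $S_d$ amounts to checking that for any $I,I'\subseteq J$ in $\cat K$ the two composites
\[
S_d(I)\xrightarrow{S_d(I\to J)} S_d(J)\xrightarrow{S_d^{\vee}(J\to I')} S_d(I')
\quad\text{and}\quad
S_d(I)\xrightarrow{S_d^{\vee}(I\to I\cap I')} S_d(I\cap I')\xrightarrow{S_d(I\cap I'\to I')} S_d(I')
\]
agree. At the group level both compositions reduce to the same map $G_d^I\twoheadrightarrow G_d^{I\cap I'}\hookrightarrow G_d^{I'}$ (project, then include), and passage to $H_d$-quotients is compatible by Condition \ref{cond:1}; the identity for $BS_d$ then follows by applying $B$. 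An application of Corollary \ref{cor:convcoftwin} to $S_d$ and $BS_d$ yields the two asserted commutation rules for $\lim$ and ${\lim}^{i>0}$.

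For the final assertion I would invoke Proposition \ref{pr:tori}(iii): for $d=2$ the quotient $G_2^I/(G_2^I\cap H_2)$ is itself a compact complex torus, so $BS_2(I)$ has the homotopy type of a finite product of copies of $\C P^{\infty}$ and hence integral cohomology concentrated in even degrees. Consequently $\tilde H^{\mathrm{odd}}(BS_2(I);\Z)=0$ for every $I\in\cat K$, and the formula just proved forces $\tilde H^{\mathrm{odd}}(\col BS_2;\Z)=0$. The step requiring the most care is the cofibrancy of $BS_d$, which depends on a functorial CW-model of $B$ turning injective homomorphisms of quasitori into closed cellular inclusions; once this is in place, the twin identity is a direct combinatorial consequence of Condition \ref{cond:1} and the rest is a mechanical application of Corollary \ref{cor:convcoftwin}.
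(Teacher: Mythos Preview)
Your proposal is correct and follows essentially the same route as the paper: construct the twin $S_d^{\vee}$ via the coordinate projections (well-defined by Condition \ref{cond:1}), set $(BS_d)^{\vee}:=B(S_d^{\vee})$, verify the twin identity, invoke cofibrancy, and apply Corollary \ref{cor:convcoftwin}. The paper's proof is terser---it simply asserts that $S_d$ and $BS_d$ are ``clearly'' cofibrant and does not spell out the odd-degree vanishing---so your extra care about the functorial CW-model for $B$ and the use of Proposition \ref{pr:tori}(iii) for the final assertion are welcome elaborations rather than a different argument.
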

\begin{proof}
By the condition, the subgroup $H_{d}\cap G_{d}^J$ maps to the subgroup $H_{d}\cap G_{d}^I$ under the natural projection $G_{d}^J\to G_{d}^I$. Hence, there is a well-defined $(\cat^{op} K)$-diagram $S_{d}^{\vee}$, where $S_{d}^{\vee}(J\to I)\colon G_{d}^{J}/(H_{d}\cap G_{d}^J)\to G_{d}^{I}/(H_d\cap G_{d}^I)$ is induced by the natural projection $G_{d}^J\to G_{d}^I$. Define $(BS_{d})^{\vee}:=B (S_{d}^{\vee})$.

We check that the pairs $(S_{d},S_{d}^{\vee})$ and $(BS_{d},(BS_{d})^{\vee})$ are pairs of twin diagrams. Let $I,I'\subseteq J\in K$ be arbitrary. Consider the following diagram:
\begin{equation}\label{eq:twincommdiag}
\begin{tikzcd}[sep=.3cm]
& 1\arrow{rr} && H_d\cap G_d^{I\cap I'}  \arrow{rr}\arrow{dd} && G_d^{I\cap I'} \arrow{dd}\arrow{rr} && S_d(I\cap I')\arrow{dd}[yshift=10.0]{S}\arrow{rr} && 1\\
1\arrow{rr} && H_d\cap G_d^I\arrow[crossing over]{rr}\arrow{dd}\arrow{ru} && G_d^I\arrow{dd}\arrow{ru}\arrow[crossing over]{rr} && S_d(I)\arrow{dd}[yshift=10.0]{S}\arrow{ru}{S^{\vee}}\arrow[crossing over]{rr}&& 1 &\\
& 1\arrow{rr} &&  H_d\cap G_d^{I'}\arrow{rr} &&  G_d^{I'}\arrow{rr} && S_d(I') \arrow{rr} && 1.\\
1\arrow{rr} && H_d\cap G_d^J \arrow[crossing over, from=uu]\arrow{rr}\arrow{ru} && G_d^J\arrow{rr}\arrow[crossing over, from=uu]\arrow{ru}\arrow{rr} && S_d(J)\arrow[crossing over, from=uu]\arrow{ru}{S^{\vee}}\arrow{rr} && 1 &
\end{tikzcd}
\end{equation}
In this diagram the horizontal short sequences are exact and are given by the embedding of the corresponding subgroup and by the quotient by its image. The remaining horizontal and vertical arrows are given by the corresponding projections and embeddings, respectively. Consider the left cube of this diagram. The front and the back faces of it are commutative by the definition. Next, the upper and the lower faces of the cube are commutative by Condition \ref{cond:1}. A straight-forward check shows that the right face of this cube is commutative. This implies that the left face of the left cube in \eqref{eq:twincommdiag} is commutative by appealing to monomorphic property of arrows from the horizontal short exact sequences in \eqref{eq:twincommdiag}. Therefore the left cube in \eqref{eq:twincommdiag} is commutative. By Lemma \ref{lm:stupid} this implies that the right cube in \eqref{eq:twincommdiag} is well-defined and commutative. Hence, the right square of the diagram \eqref{eq:twincommdiag} is commutative. It remains to notice that this square coincides with the square from the condition of the Definition \ref{defn:twindef}, see \eqref{eq:bigdiag} (any arrow of this square is labelled by the corresponding diagram).  It follows that the pair $(S_{d},S_{d}^{\vee})$ is a pair of twin diagrams. The proof of the claim that the pair $(BS_{d},(BS_{d})^{\vee})$ is a pair of twin diagrams is obtained from this fact by applying the functor of the classifying space $B$ to the diagram \eqref{eq:twincommdiag}. Clearly, the diagrams $S_{d}$, $BS_{d}$ are cofibrant. We conclude that the necessary claim follows from Corollary \ref{cor:convcoftwin}.
\end{proof}

\begin{corollary}\label{cor:eqcohdec}
Suppose that $K$ and $H_d$ satisfy Condition \ref{cond:1}. Then one has the following ring isomorphism:
\[
\tilde{H}^{*}_{L_d}((D^d,S^{d-1})^K/H_d)\cong \lim\  \tilde{H}^{*} (B (G_d^I / (G_d^I\cap H_d))).
\]
In particular, one has $\tilde{H}^{odd}_{L_2}(\mathcal{Z}_K /H_2;\ \Z)=0$.
\end{corollary}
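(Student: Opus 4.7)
The plan is to combine the two main results of this section, Theorem \ref{thm:eqcoh} and Theorem \ref{thm:limcoh}, in a chain of ring isomorphisms. First I would invoke Theorem \ref{thm:eqcoh}, which (for any closed subgroup $H_d\subseteq G_d^m$, with no additional hypothesis) gives the ring isomorphism
\[
\tilde{H}^{*}_{L_d}\bigl((D^d,S^{d-1})^K/H_d\bigr)\cong \tilde{H}^{*}\bigl(\col BS_d\bigr),
\]
where I use $BS_d(I)=B(G_d^I/(G_d^I\cap H_d))$ as defined in \S\ref{ssec:borelc}.

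Next, I would apply Theorem \ref{thm:limcoh}, whose hypothesis is precisely Condition \ref{cond:1}. That theorem yields
\[
\tilde{H}^{i}(\col BS_d)=\lim\ \tilde{H}^{i}(BS_d(I)),\qquad {\lim}^{i}\ \tilde{H}^{i}(BS_d(I))=0 \text{ for } i>0,
\]
as abelian groups. Composing with the previous isomorphism gives the asserted additive isomorphism. The remaining point is that this isomorphism preserves cup products: this follows because the ring structure on $\tilde{H}^{*}(\col BS_d)$ is the one inherited from the (projective) limit of the ring homomorphisms $\tilde{H}^{*}(BS_d(I))\to \tilde{H}^{*}(BS_d(I'))$ induced by the diagram arrows $S_d(I'\to I)^{\vee}$, precisely because the higher $\lim^{i}$ terms vanish (so the Bousfield--Kan spectral sequence collapses at the $E_2$-page to the zeroth column, and the multiplicative structure of $\tilde{H}^{*}(\col BS_d)$ is identified with the natural multiplicative structure on $\lim\ \tilde{H}^{*}(BS_d(I))$ induced by the cup products on each $BS_d(I)$).

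For the ``in particular'' part, I would observe that by Proposition \ref{pr:tori}\,$(iii)$ the quotient $G_2^I/(G_2^I\cap H_2)$ is a torus for each $I\in \cat K$. Its classifying space is therefore a finite product of copies of $\C P^{\infty}$, so $\tilde{H}^{odd}(BS_2(I);\,\Z)=0$ for every $I\in\cat K$. Taking the limit over $\cat^{op} K$ preserves this vanishing, giving $\tilde{H}^{odd}_{L_2}(\mathcal{Z}_K/H_2;\,\Z)=0$; alternatively one may just quote the final sentence of Theorem \ref{thm:limcoh} directly.

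The only mildly non-routine point is the multiplicativity of the isomorphism in the first display; everything else is an immediate concatenation of the already-established results, so I do not anticipate a genuine obstacle here.
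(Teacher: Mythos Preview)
Your proposal is correct and matches the paper's approach: the paper states this corollary without proof, treating it as an immediate combination of Theorem~\ref{thm:eqcoh} and Theorem~\ref{thm:limcoh}, exactly as you outline. Your extra remarks on multiplicativity and the explicit justification of the odd-degree vanishing via Proposition~\ref{pr:tori}\,$(iii)$ are reasonable elaborations of what the paper leaves implicit.
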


\begin{example}\label{ex:freelim}
Let $H_d$ be a freely acting closed subgroup in $G_d^m$ on $(D^d,S^{d-1})^K$ (for example, $H=\lb 1\rb$). In this case the corresponding Borel construction is homotopy equivalent to the Davis-Januszkiewicz space $(\F_d P^{\infty},\pt)^K$. The cohomology ring of the latter space is isomorphic to the Stanley-Reisner ring (see \cite{bi-dc-pr-90, da-ja-91}). Hence, there is the ring isomorphism \cite{bu-pa-15}
\[
\tilde{H}^{*}_{L_d}((D^d,S^{d-1})^K/H_d)\cong R_d[K].
\]
It is well known that
\[
R_d[K]\cong \underset{I=(i_1,\dots,i_q)\in \cat K}{\lim}\  R_d[v_{i_1},\dots,v_{i_q}],
\]
holds for the Stanley-Reisner ring \cite{bu-pa-15}, where the arrows are the obvious monomorphisms to the polynomial ring $R_d[v_1,\dots,v_m]$, where $\deg v_j:=d_i$. Therefore, the group
\[
\lim\ \tilde{H}^{i} (BG_{d}^{I};\ R_d)\cong \underset{I=(i_1,\dots,i_q)\in \cat K}{\lim}\  (R_d[v_{i_1},\dots,v_{i_q}])_{i},
\]
agrees with the respective component $(R_d[K])_{i}$ of the Stanley-Reisner ring (for $d=1$ take reduction of coefficients modulo $2$).
\end{example}

\subsection{Formality of the Borel construction for the class of quotients for moment-angle complexes and Eilenberg-Moore spectral sequence}\label{ssec:formality}

In this section we study only quotients of complex moment-angle complexes (that is, $d=2$) and consider only cohomology with integral coefficients due to usage of the Eilenberg-Moore spectral sequences. For brevity we omit the subscript $d$ and replace $G_2$ with $T=S^1$ everywhere below. It is crucial that everywhere in \S\ref{ssec:formality} we assume that Condition \ref{cond:1} holds for the pair $(K, H)$.

The proof of the following proposition is similar to the proofs of \cite[p.44, Lemma 4.7, p.42]{no-ra-05} (notice that the analogue of \cite[p.42, Corollary 3.12]{no-ra-05} is given in \S\ref{ssec:cont}).

\begin{proposition}\cite{no-ra-05}\label{pr:qiso1}
Suppose that Condition \ref{cond:1} holds for the pair $(K, H)$. Then the natural homomorphism $g\colon C^*(\col\  BS)\to \lim\ C^*(BS)$ is a quasiisomorphism in $\dga_{\Z}$, and the edge homomorphism (see \cite{no-ra-05}) $h\colon H^*(\col\  BS)\to \lim\ H^*(BS)$ is an isomorphism in $\dga_{\Z}$, where $C^*(\col\  BS)$ is the normalized singular cochain complex of $\col\  BS$.
\end{proposition}

In the following we need to describe the induced morphisms of chain and cochain complexes for tori under corresponding torus homomorphisms. Recall that for any complex compact torus $T=(S^1)^r$ there are the simplicial sets $B(BN)$, $BN$ given by
\[
B(BN)_n:=\lb [b_{n-1},\dots,b_0]|\ b_i\in BN_i\rb,\ BN_n:=\lb [a_0,\dots,a_{n-1}]|\ a_0,\dots,a_{n-1}\in N\rb,\ n>0,
\]
and $B(BN)_0=BN_0:=\lb [\ ]\rb$, where $N=\pi_1(T,e)\simeq \Z^r$ and the explicit formulas for faces and degenerations of $B(BN)$ are given in \cite[p.87]{ma-92}. Any homomorphism of tori $f\colon T\to T'$ of ranks $r$, $r'$, respectively, is given by the formula
\[
(e^{2\pi i \varphi_{1}},\dots, e^{2\pi i \varphi_{r}})\mapsto
(e^{2\pi i \sum_{q} a_{1,q}\varphi_{q}},\dots, e^{2\pi i \sum_{q} a_{r',q}\varphi_{q}}),\ \varphi_{1},\dots,\varphi_{r}\in[0,1),
\]
for some integer matrix $A=(a_{i,j})\in \Mat_{r',r}(\Z)$, which we denote by $A=A(f)$, and vice versa. The bar-construction of the simplicial set $B(BN)$ gives the complexes whose homology and cohomology compute the integral homology and cohomology of $BT$, respectively \cite{ma-92}. We denote the corresponding simplicial set, and chain and cochain complexes by $\overline{W}(BT)=B(BN)$, $\overline{W}_{*}(BT)$, $\overline{W}^*(BT)$, respectively. (Notice that in \cite{ma-92} $\overline{W}(BT)$ is denoted by $\overline{W}(T)$.) Explicitly, the induced by $f$ morphism $f_{*}\colon \overline{W}(BT)\to \overline{W}(BT')$ is given by
\[
f_{*}([b_{n-1},\dots,b_0])=[f_{*}(b_{n-1}),\dots,f_{*}(b_0)],\ f_{*}([a_0,\dots,a_{n-1}])=[f_{*}(a_0),\dots,f_{*}(a_{n-1})],
\]
where $f_{*}(a)=A a$ is the lattice morphism given by the matrix $A$.

\begin{lemma}\label{lm:qisosimp}
There is the following commutative diagram of simplicial sets
\[
\begin{tikzcd}
\overline{W}(BT)\arrow{r}\arrow{d}{f_*} & S(BT) \arrow{d}{f_*}\\
\overline{W}(BT')\arrow{r} & S(BT'),
\end{tikzcd}
\]
where any horizontal arrow is a homotopy equivalence and $S^*(BT)$ is the simplicial set of singular simplices in $BT$.
\end{lemma}
\begin{proof}
Notice that the simplicial set $\overline{W}(BT)$ coincides with the bar construction $B(*,BN,*)$ of the simplicial group $BN$ \cite[\S 21]{ma-92}. By \cite[Lemma 3.3]{fr-21}, there exists a homotopy equivalence of simplicial sets
\[
F\colon BN\to S(T),
\]
which is natural with respect to morphisms of the torus $T$. The map $F$ induces the homotopy equivalence
\begin{equation}\label{eq:1heq}
F_*\colon \overline{W}(BT)=B(*,BN,*)\to B(*,S(T),*),
\end{equation}
by naturality of the bar construction, which is again functorial with respect to morphisms of the group $T$. By \cite[\S 13]{ma-75}, there is a homotopy equivalence
\begin{equation}\label{eq:2heq}
B(*,S(T),*) \to S(BT),
\end{equation}
which is also natural with respect to morphisms of the torus $T$. One obtains the desired commutative diagram by taking the composition of \eqref{eq:1heq}, \eqref{eq:2heq} and using the aforementioned naturality properties.
\end{proof}

In the following we consider a simplicial variant of the argument from \cite{no-ra-05}. Choose a generator $v$ of the ring $H^* (BS^1)\cong \Z[v]$ for the Eilenberg-Maclane space $BS^1=\C P^{\infty}$. Choose a cocycle $\psi_{v}\in \overline{W}^2(BS^1)$ representing $v$ in sense of homotopy equivalence from Lemma \ref{lm:qisosimp}. Let $\psi\colon H^* (BS^1) \to \overline{W}^*(BS^1)$ be given by $v^q\mapsto (\psi_{v})^q$, where the product on the right is with respect to $\cup_{1}$-product. For a complex compact torus $T=(S^1)^r$ let
\[
\begin{tikzcd}
\kappa\colon H^* (BT)\arrow{r}{\cong} & H^* (BS^1)^{\otimes r} \arrow{r}{\otimes \psi} & \overline{W}^* (BS^1)^{\otimes r},
\end{tikzcd}
\]
be given by the composition of K\"unneth isomorphism and $\otimes \psi$. There is the following zig-zag of quasiisomorphisms
\[
\begin{tikzcd}
\Hm (\overline{W}_{*} (BS^1);\ \Z)^{\otimes r}\arrow{r} & \Hm (\overline{W}_* (BS^1)^{\otimes r};\ \Z) & \overline{W}^* (BT) \arrow[swap]{l}{ez^*},
\end{tikzcd}
\]
where $ez^*$ is the dual to the Eilenberg-Zilber map. For a group homomorphism $f\colon T\to T'$ with the corresponding matrix $A=(a_{i,j})\in \Mat_{r',r}(\Z)$, let
\[
A_*\colon \overline{W}_{*} (BS^1)^{\otimes r}\to \overline{W}_{*} (BS^1)^{\otimes r'},\
A^*\colon \overline{W}^{*} (BS^1)^{\otimes r'}\to \overline{W}^{*} (BS^1)^{\otimes r},
\]
be induced on degree $2$ by the lattice morphisms with matrices $A$ and $A^{t}$, respectively.

\begin{lemma}\label{lm:functqiso}
Let $f\colon T\to T'$ be a homomorphism of compact complex tori of dimension $r$, $r'$, respectively. Then there is the following commutative diagram
\begin{equation}\label{eq:nora}
\begin{tikzcd}
H^* (BT') \arrow{r}{\kappa} \arrow{d}{f^*} & \overline{W}^*(BS^1)^{\otimes r'} \arrow{r} \arrow{d}{A^*} & \Hm(\overline{W}_{*} (BS^1)^{\otimes r'};\ \Z) \arrow{d}{\Hm(A_*,\Id)} & \overline{W}^* (BT') \arrow[swap]{l}{ez^*} \arrow{d}{f^*} &\\
H^* (BT) \arrow{r}{\kappa} & \overline{W}^*(BS^1)^{\otimes r} \arrow{r} & \Hm(\overline{W}_{*} (BS^1)^{\otimes r};\ \Z) & \overline{W}^* (BT) \arrow[swap]{l}{ez^*} &\\
\end{tikzcd}
\end{equation}
where any horizontal arrow is a quasiisomophism.
\end{lemma}
\begin{proof}
Let $H^{*}(BT)\cong\Z[v_{1},\dots,v_{r}]$, $H^{*}(BT')\cong\Z[v'_{1},\dots,v'_{r}]$ be the isomorphisms given by K\"unneth isomorphism. The formulas
\begin{equation}\label{eq:invimfor}
A^*(\psi_{v'_i})=\sum_{q} a_{i,q} \psi_{v_q},\ f^*(v'_i)=\sum_{q} a_{i,q} v_q,
\end{equation}
give (by taking products) the two left vertical arrows in \eqref{eq:nora}. This proves commutativity of the left square in \eqref{eq:nora}. Choose the generator $u\in H_{2}(BS^1)$ and let $\varphi_{u}\in \overline{W}_{2}(BS^1)$ be a cycle representing $u$ in sense of homotopy equivalence from Lemma \ref{lm:qisosimp}. Then $\overline{W}^* (BS^1)\to\Hm(\overline{W}_* (BS^1);\ \Z)$ is given by $\psi_{v}\mapsto (\varphi_{u})^*$, where $(\varphi_{u})^*$ is the dual character to $\varphi_{u}$. Let $H_{*}(BT)\cong\Z[u_{1},\dots,u_{r}]$, $H^{*}(BT')\cong\Z[u'_{1},\dots,u'_{r}]$ be the isomorphisms given by K\"unneth isomorphism. The formulas
\begin{equation}\label{eq:invimforhom}
f_*(u_i)=\sum_{q} a_{q,i}u'_{q},\ \Hm(A_*,\Id)((\varphi_{u_i})^*)=\sum_{q} a_{q,i} (\varphi_{u'_q})^*,
\end{equation}
determine $f_*$ and $\Hm(f_*,\Id)$, respectively. Then the commutativity of the middle square in \eqref{eq:nora} follows directly from \eqref{eq:invimfor} and \eqref{eq:invimforhom}. Notice that there is the following diagram
\begin{equation}\label{eq:commez}
\begin{tikzcd}
\overline{W}_*(BS^1)^{\otimes r} \arrow{d}{A_*} \arrow{r}{ez}  & \overline{W}_* (BT) \arrow{d}{f_*}\\
\overline{W}_*(BS^1)^{\otimes r'} \arrow{r}{ez} & \overline{W}_* (BT'),
\end{tikzcd}
\end{equation}
where $ez$ is the Eilenberg-Zilber map. The explicit formula for the Eilenberg-Zilber map \cite[\S 29]{ma-92} and \eqref{eq:invimforhom} imply that the diagram \eqref{eq:commez} is commutative. This implies the commutativity of the right square in \eqref{eq:nora}. The proof is complete.
\end{proof}

\begin{corollary}\label{cor:zigzagdiag}
Suppose that Condition \ref{cond:1} holds for the pair $(K, H)$. Then there is the following zigzag
\begin{equation}\label{eq:zigzaglim}
\begin{tikzcd}
\lim H^*(BS) \arrow{r} & \lim D_1 \arrow{r} & \lim D_2 & \lim \overline{W}^*(BS) \arrow{l} & \lim C^*(BS)\arrow{l},
\end{tikzcd}
\end{equation}
of quasiisomorphisms, where
\[
D_1(I):=\overline{W}^*(BS^1)^{\otimes \rk S(I)},\ D_1(I\to J):=A(S(I\to J))^*,
\]
\[
D_2(I):=\Hm(\overline{W}_* (BS^1)^{\otimes \rk S(I)};\ \Z),\ D_2(I\to J):=\Hm(A(S(I\to J))_*,\Id).
\]
\end{corollary}
\begin{proof}
Notice that $S(I\to J)$ is a monomorphic homomorphism of tori and, therefore, $A(S(I\to J))$ is a monomorphic lattice homomorphism. This implies that any arrow in any diagram from in \eqref{eq:zigzaglim} is an epimorphism. Therefore, any such diagram is fibrant in the model category $\dga_{\Z}$, see \cite[Appendix C.1]{bu-pa-15}. Hence, any limit from \eqref{eq:zigzaglim} is quasiisomorphic to the respective homotopy limit, see \cite[Appendix C.1]{bu-pa-15}. It remains to use Lemma \ref{lm:functqiso}, because any quasiisomorphism of diagrams in $[\cat^{op} K,\dga_{\Z}]$ induces quasiisomorphism of the respective homotopy limits (see \cite[Appendix C.1]{bu-pa-15}).
\end{proof}

\begin{theorem}\label{thm:form}
Suppose that Condition \ref{cond:1} holds for the pair $(K, H)$. Then the differential graded algebra $C^*(\col\ BS)$ is formal in $\dga_{\Z}$.
\end{theorem}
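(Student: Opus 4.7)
The plan is to assemble a zigzag of dga quasiisomorphisms between $C^*(\col\ BS)$ and its cohomology $H^*(\col\ BS)$ equipped with the zero differential, which by definition is what formality in $\dga_{\Z}$ demands. The three inputs—Proposition \ref{pr:qiso1}, Lemma \ref{lm:qiso1}, and Theorem \ref{thm:limcoh}—provide exactly the pieces needed, following the strategy used in \cite{no-ra-05} for the Davis-Januszkiewicz case, with $BG^m/BH$-style diagrams replaced by $B(T^I/(T^I\cap H))$.

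First I would invoke Proposition \ref{pr:qiso1} to produce a direct quasiisomorphism $C^*(\col\ BS)\xrightarrow{\sim} \lim\ C^*(BS)$ in $\dga_{\Z}$. Next, Lemma \ref{lm:qiso1} supplies a zigzag of dga quasiisomorphisms between $\lim\ C^*(BS)$ and $\lim\ H^*(BS)$, where the latter is regarded as a dga with zero differential (each object $B(T^I/(T^I\cap H))$ being formal on its own, as a product of copies of $BT$ with polynomial cohomology). Finally, Theorem \ref{thm:limcoh} identifies $\lim\ H^*(BS)$ with $H^*(\col\ BS)$ as graded rings, because the associated Bousfield–Kan spectral sequence collapses at $E_2$ and is concentrated in the $s=0$ column, so the limit term reads off the target directly. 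Splicing these pieces gives
\[
C^*(\col\ BS)\xrightarrow{\sim} \lim\ C^*(BS)\xleftarrow{\sim}\cdots\xrightarrow{\sim}\lim\ H^*(BS)\cong H^*(\col\ BS),
\]
which is a zigzag of quasiisomorphisms in $\dga_{\Z}$, establishing formality.

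The main point of care—and the reason Condition \ref{cond:1} is indispensable—is to guarantee that all three pieces really do respect the multiplicative structure, not only the underlying chain complexes. Condition \ref{cond:1} is what allows one to construct the twin diagram $(BS)^{\vee}$ in the first place, and this twin powers both the $\lim^{i}=0$ vanishing used in Theorem \ref{thm:limcoh} and the cofibrancy/limit-exchange step used in Proposition \ref{pr:qiso1}; without it the Bousfield–Kan spectral sequence need not collapse on the $s=0$ column and the argument would break at the very first step. Once the twin is in hand, the cited lemmas of \cite{no-ra-05} apply almost verbatim, and the only genuinely new bookkeeping is verifying that the identification $\lim\ H^*(BS)\cong H^*(\col\ BS)$ from Theorem \ref{thm:limcoh} is a ring isomorphism rather than only a graded-module one; this in turn follows from the multiplicative nature of the spectral sequence, and would be the single point I would spell out explicitly rather than cite.
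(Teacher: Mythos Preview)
Your proposal is correct and matches the paper's own proof essentially line for line: the paper simply states that the theorem follows by composing the quasiisomorphisms from Proposition~\ref{pr:qiso1}, Lemma~\ref{lm:qiso1}, and Theorem~\ref{thm:limcoh}, in the manner of \cite[Theorem~4.8]{no-ra-05}. Your additional remark about verifying that the identification $\lim H^*(BS)\cong H^*(\col BS)$ is multiplicative goes slightly beyond what the paper spells out, but is in the same spirit.
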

\begin{proof}
Combining Proposition \ref{pr:qiso1} and Corollary \ref{cor:zigzagdiag} yields the zigzag
\[
\begin{tikzcd}[sep=.5cm]
H^* (\col BS) \arrow{r}{h} & \lim H^*(BS) \arrow{r} & \lim D_1 \arrow{r} & \lim D_2 & \lim \overline{W}^*(BS) \arrow{l} & \lim C^*(BS)\arrow{l} & C^* (\col BS) \arrow[swap]{l}{g}.
\end{tikzcd}
\]
The proof is complete.
\end{proof}

For a Serre fibration $p\colon E\to B$ with a connected fiber $F$ the Eilenberg-Moore spectral sequence $(E^{*,*}_{*},d)$ of the fiber inclusion has the second page \cite[p.233]{mcc-01}
\[
E^{n,s}_{2}=\Tor^{n,s}_{H^*(B)}(H^*(E),\Z),
\]
where the first grading is cohomological and the second is inner. If $B$ is simply-connected, then $(E^{*,*}_{*},d)$ converges strongly to $H^*(F)$, see \cite[p.233]{mcc-01}.

\begin{theorem}\label{thm:em}
Suppose that Condition \ref{cond:1} holds for the pair $(K, H)$. Then the Eilenberg-Moore spectral sequence for the fiber inclusion to the Borel construction of the $L$-action on $\mathcal{Z}_{K}/H$ is isomorphic to
\[
\Tor^{i,j}_{H^*(BL)}(\lim H^*(BS);\Z)\Rightarrow H^{i+j} (\mathcal{Z}_K /H).
\]
It collapses at the second page. In particular, the associated graded algebra of $H^{*}(\mathcal{Z}_{K}/H)$ is isomorphic to $\Tor^{*}_{H^*(BL)}(\lim H^*(BS);\Z)$.
\end{theorem}
\begin{proof}
The Eilenberg-Moore spectral sequence in question has the second page $\Tor^{*}_{H^*(BL)}(H^*(\col BS);\Z)$ and converges to $\Tor^{*}_{C^*(BL)}(C^*(\col BS);\Z)$. However, by formality of $BL$ and of $\lim BS$ (see Theorem \ref{thm:form}), these pages coincide. Hence, by Proposition \ref{pr:qiso1} this spectral sequence collapses at the second page, which proves the first claim. The second claim then follows trivially from Theorem \ref{thm:limcoh}.
\end{proof}

\begin{remark}
Recall that the quotient $(D^d,S^{d-1})/H_d$ is called a \textit{partial quotient} \cite{bu-pa-15} if the corresponding $H_d$-action is free. In the particular case of partial quotients the claim of Theorem \ref{thm:em} was previously known (see Example \ref{ex:freelim}). We refer to \cite{fr-21} for the necessary bibliographical links and for the recent historical overview on the results about cohomology groups and rings of partial quotients. Theorem \ref{thm:em} is a new generalization of previously known results on the above Eilenberg-Moore spectral sequence for partial quotients to the case of any (not necessarily freely acting) closed subgroup $H$ (in $T^m$) satisfying Condition \ref{cond:1}.
\end{remark}

\begin{remark}
Puppe's lemma \cite{fa-96} and cofibrancy of $BS_d$ imply that there is the following $L_d$-equivariant Serre fibration:
\begin{equation}\label{eq:hocserre}
\begin{tikzcd}
\hoc\ Q_d \arrow{r} & \col\ BS_d \arrow{r} & BL_d
\end{tikzcd}.
\end{equation}
For $d=2$, one can deduce that the Serre spectral sequence of the diagram of fibrations \eqref{eq:sfd} evaluated at $I\in\cat K$ as well as of the fibration \eqref{eq:hocserre} collapses in the term $E_3$, compare with \cite[p.115, Proposition 7.36]{bu-pa-02}.
\end{remark}


\section{Cohomology of partial quotients: concluding remarks and open problems}

In this section we are going to discuss the toral rank conjecture and torsion in the integral cohomology of partial quotients. The following conjecture was formulated by Halperin in~\cite{H85}.

\begin{conjecture}\label{halperinconj}
Let $X$ be a finite-dimensional CW complex. Then the inequality holds
$$
\hrk(X):=\sum\limits_{i\geq 0}\dim H^i(X;\Q)\geq 2^{\trk(X)},
$$
where $\trk(X)$ denotes the maximal rank of a torus acting almost freely on $X$.
\end{conjecture}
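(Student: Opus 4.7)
Halperin's Toral Rank Conjecture is a long-standing open problem, so a proof at full generality is not on the table; the intended contribution, signaled in the abstract and in the paragraph preceding the conjecture, is a verification for the specific class $X = \zk/S^1_d$ with $K = \Delta^k_m$, the partial quotient of a moment-angle complex over a simplex skeleton by the diagonal circle subgroup. I will sketch a plan for this restricted case.

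The main ingredient will be the homotopy decomposition of $X$ proved in \cite{fu-19}, which expresses $X$ as a wedge of suspensions of smash products of moment-angle-type building blocks. Reading the Betti-sum off of that decomposition produces an explicit formula for $\hrk(X)$ as a combinatorial function of $k$ and $m$; in particular, one obtains a sharp lower bound on $\hrk(X)$ that grows exponentially in $m-k$.

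To bound $\trk(X)$ from above I would use a lift argument. If $T^r$ acts almost freely on $X$, then on the open locus of $\zk$ where the original $S^1_d$-action is free the composite torus $T^r\times S^1_d$ acts with finite stabilizers; extending by continuity, and using that $\trk$ is insensitive to the deletion of proper-dimensional loci, one obtains
\[
\trk(X) + 1 \;\leq\; \trk(\zk) \;=\; s(K),
\]
where $s(K)$ is the Buchstaber number of $K$, which is known explicitly for $K = \Delta^k_m$. Comparing this upper bound on $\trk(X)$ with the lower bound on $\hrk(X)$ extracted in the previous step then yields the desired inequality $\hrk(X)\geq 2^{\trk(X)}$.

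The principal obstacle is the combinatorial matchup in this final comparison: for every admissible pair $(k,m)$ one must verify that the summand count produced by \cite{fu-19} exceeds $2^{s(K)-1}$, and this requires a careful inspection of the ranges in which $\Delta^k_m$ gives thin versus thick skeleta. A secondary subtlety is the lift step itself in the almost-free (rather than strictly free) regime, where one must ensure that an almost free $T^r$-action on the quotient $X$ genuinely promotes to an almost free rank-$(r+1)$ torus action on $\zk$; this is the point at which the ``previously known results'' referenced in the abstract — presumably on Buchstaber numbers of skeleta and the TRC for moment-angle manifolds themselves — do the heavy lifting.
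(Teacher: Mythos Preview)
You correctly recognize that the displayed statement is Halperin's conjecture, which the paper does not prove in general; the actual target is the Corollary that TRC holds for $X=\mathcal{Z}_{\Delta^k_m}/S^1_d$, and your plan addresses exactly that. Your use of Fu's wedge decomposition to bound $\hrk(X)$ from below is also what the paper does.

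Where your plan goes wrong is the upper bound on $\trk(X)$. You assert $\trk(\zk)=s(K)$, but these are different invariants: $s(K)$ is the maximal rank of a \emph{subtorus of $T^m$ acting freely} on $\zk$, while $\trk(\zk)$ allows arbitrary almost-free torus actions; one has $s(K)\leq\trk(\zk)=m-n$ (Ustinovsky, with $n-1=\dim K$), and strict inequality occurs for many $K$. For $K=\Delta^k_m$ the relevant number is $\trk(\zk)=m-k-1$, and this is what the paper uses. The paper also does not attempt your ``$+1$'' lift of an almost-free action through the quotient; it invokes only the weaker inequality $\trk(\zk/S^1_d)\leq\trk(\zk)=m-k-1$, so the second subtlety you flag never enters. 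With that bound in hand the endgame is a one-line comparison against the uniform estimate $\hrk(X)\geq 2^{m-k-1}$, obtained from just two summands of Fu's wedge (the $\mathcal{Z}_{\Delta^{k}_{m-1}}$ term, via Ustinovsky's $\hrk(\mathcal{Z}_K)\geq 2^{m-n}$, together with the $S^{2k+1}\ast T^{m-k-2}$ term). No case analysis on ``thin versus thick'' skeleta is required, so the principal obstacle you anticipate does not arise.
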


In what follows we restrict our attention to (almost) free actions of toric subgroups in $T^m$ on $\zk$ and on its partial quotients ($m=f_0(K)$), and we denote by $S^1_D$ the diagonal circle in $T^m$. In this section we mainly study the family of spaces of the type $\zk/S^1_D$.


Recall that the \textit{Buchstaber number} $s(K)$ of a simplicial complex $K$ is the maximal rank of a complex torus acting freely on $\zk$. Following the notation from~\cite{fu-19}, we denote by $\Delta^{k}_m$ the $k$-skeleton of the $(m-1)$-dimensional simplex for $m\geq 2$ and $0\leq k\leq m-2$. The Buchstaber numbers for the spaces in this class were computed in~\cite{fm-11}.

\begin{theorem} 
For any $m\geq 2$ and $0\leq k\leq m-2$, the partial quotient $\mathcal Z_{\Delta^k_m}/S^1_D$ is a rationally formal space with torsion free integral cohomology. Moreover, the following inequality holds:
$$
\hrk(\mathcal Z_{\Delta^k_m}/S^1_D)\geq (k+2)2^{m-k-2}.
$$
\end{theorem}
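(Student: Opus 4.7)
The plan is to combine the explicit homotopy decomposition of $\mathcal Z_{\Delta^k_m}/S^1_d$ proved by Fu in \cite{fu-19} with the formality and equivariant-cohomology machinery developed in Section \ref{sec:borel} of the present paper. I would begin by recalling Fu's decomposition, which presents $\mathcal Z_{\Delta^k_m}/S^1_d$, up to homotopy equivalence, as a combinatorially explicit space assembled from building blocks (wedges of spheres and simpler pieces) indexed by faces of $\Delta^k_m$. Since the diagonal circle $S^1_d$ acts freely on $\mathcal Z_{\Delta^k_m}$, Condition \ref{cond:1} is satisfied automatically, so all results of Section \ref{sec:borel} are available as supplementary tools.

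Torsion-freeness of $H^{*}(\mathcal Z_{\Delta^k_m}/S^1_d;\Z)$ would be read off summand-by-summand from Fu's decomposition: each building block has torsion-free integral cohomology by standard wedge-of-spheres type results for moment-angle complexes over skeleta, and torsion-freeness then propagates via the cofiber sequences gluing the decomposition together. For rational formality I would either note that each summand is formal and formality is preserved under the cofibrations used by Fu, or proceed via Theorem \ref{thm:form} to obtain formality of $C^{*}(\col BS)$ in $\dga_{\Z}$ under Condition \ref{cond:1}, and then combine this with the $E_{2}$-collapse of the Eilenberg--Moore spectral sequence from Proposition \ref{pr:em} to transfer rational formality from the Borel construction to the fiber $\mathcal Z_{\Delta^k_m}/S^1_d$.

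For the inequality $\hrk(\mathcal Z_{\Delta^k_m}/S^1_d)\geq 2^{m-k-1}$, the plan is to count Betti numbers directly from Fu's decomposition. A convenient organizing device is the Hochster-type decomposition of $H^{*}(\mathcal Z_{\Delta^k_m};\Q)$ as a direct sum indexed by subsets $J\subseteq [m]$ with $|J|\geq k+2$, combined with the Gysin sequence of the principal $S^1_d$-bundle $\mathcal Z_{\Delta^k_m}\to \mathcal Z_{\Delta^k_m}/S^1_d$. Selecting the subfamily of subsets $J\subseteq[m]$ containing a fixed $(k+1)$-element witness produces a combinatorial family of cardinality $2^{m-k-1}$; I would then verify that the corresponding classes remain linearly independent after passing to the quotient, yielding at least $2^{m-k-1}$ independent rational cohomology classes on $\mathcal Z_{\Delta^k_m}/S^1_d$.

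The hard part is precisely this last linear-independence step: the Euler class of the $S^1_d$-bundle could in principle collapse some of the selected subspace, and the technical work consists in showing that multiplication by the Euler class remains sufficiently injective on the chosen $2^{m-k-1}$-dimensional piece. Here Fu's explicit model is essential, as it allows a summand-by-summand computation of the $S^1_d$-quotient effect and a direct matching with the Hochster count. Once this is completed, the bound $\hrk\geq 2^{m-k-1}$ follows; since the diagonal circle sits inside a freely acting torus of rank at least $m-k-1$ on $\mathcal Z_{\Delta^k_m}$, the same inequality simultaneously confirms the Toral Rank Conjecture for the entire family $(\mathcal Z_{\Delta^k_m}/S^1_d)_{0\leq k\leq m-2}$.
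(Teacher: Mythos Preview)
Your overall plan of invoking Fu's decomposition is the right starting point, but you appear to have missed that this decomposition is a \emph{wedge}: Fu proves
\[
\mathcal Z_{\Delta^k_m}/S^1_d \;\simeq\; \C P^{k+1}\vee \mathcal Z_{\Delta^{k}_{m-1}}\vee \Bigl(\bigvee_{i=1}^{k} S^{2i-1}\ast \mathcal Z_{\Delta^{k-i}_{m-i-1}}\Bigr)\vee \bigl(S^{2k+1}\ast T^{m-k-2}\bigr),
\]
and each $\mathcal Z_{\Delta^k_m}$ is itself a wedge of spheres by the result of Grbi\'c and Theriault. Once you know this, torsion-freeness and rational formality are immediate: a wedge of formal spaces with torsion-free cohomology is again formal with torsion-free cohomology. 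There are no cofiber sequences to propagate anything through.

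This matters because your two proposed formality arguments both have gaps as stated. For Route~1, formality is \emph{not} preserved by arbitrary cofiber sequences, so ``formality propagates via the cofibrations'' is not a valid step unless you already know the attaching maps are trivial (i.e.\ you have a wedge). For Route~2, Theorem~\ref{thm:form} gives formality of the \emph{Borel construction} $\col BS_d$, not of the fiber $\mathcal Z_{\Delta^k_m}/S^1_d$; the $E_2$-collapse in Proposition~\ref{pr:em} identifies the associated graded of $H^*(\mathcal Z_K/H)$ with a $\Tor$, but this does not by itself transfer formality from the total space to the fiber.

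For the inequality $\hrk\ge 2^{m-k-1}$, your Gysin/Hochster/Euler-class strategy is far more delicate than necessary, and the ``hard part'' you flag is a self-imposed obstacle. The paper simply reads $\hrk$ off the wedge: $\hrk(\C P^{k+1})=k+2$, $\hrk(\Sigma T^{m-k-2})=2^{m-k-2}$, and $\hrk(\mathcal Z_{\Delta^k_{m-1}})\ge 2^{m-k-2}$ by Ustinovsky; summing (and subtracting the overcounted basepoint contributions) already gives at least $k+1+2^{m-k-2}+2^{m-k-2}-1\ge 2^{m-k-1}$. No linear-independence-after-quotient argument is needed.
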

\begin{proof}
Due to~\cite[Theorem 4.5.10]{fu-19}, for any $0\leq k\leq m-2$ one has:
\[
\mathcal{Z}_{\Delta_{m}^k}/S^1_{D}\simeq \mathbb C P^{k+1}\vee \mathcal{Z}_{\Delta^{k}_{m-1}}\vee (\bigvee_{i=1}^{k} S^{2i-1}\ast \mathcal{Z}_{\Delta_{m-i-1}^{k-i}} )\vee (S^{2k+1}\ast T^{m-k-2}).
\]
Furthermore, by~\cite[Corollary 9.5]{gr-th-07}, we obtain:
\[
\mathcal{Z}_{\Delta_{m}^k}\simeq \bigvee_{j=k+2}^{m} (S^{k+j+1})^{\vee \binom{m}{j}\binom{j-1}{k+1}}.
\]
It follows immediately that the partial quotient $\mathcal Z_{\Delta^k_m}/S^1_D$ is a rationally formal space with torsion free integral cohomology. 

One has the homotopy equivalence $\Sigma T^n\simeq S^2\vee \Sigma T^{n-1}\vee \Sigma^{2}T^{n-1}$ for each $n\geq 2$. It implies that $\Sigma T^n$ is a homotopy wedge of spheres. Therefore the formula
$$
\hrk(\mathcal{Z}_{\Delta_{m}^k}/S^1_{D})=1+(\hrk(\mathbb C P^{k+1})-1)+(\hrk(\mathcal{Z}_{\Delta^{k}_{m-1}})-1)+\sum\limits_{i=1}^{k} (\hrk(\mathcal{Z}_{\Delta_{m-i-1}^{k-i}})-1)+(\hrk(\Sigma T^{m-k-2})-1),
$$
holds.

Note that the formulas $\hrk(\mathbb C P^{k+1})=k+2$ and $\hrk(\Sigma T^{m-k-2})=2^{m-k-2}$ take place. 
Moreover, $\hrk(\mathcal{Z}_{\Delta^{k}_{m-1}})\geq 2^{(m-1)-(k+1)}=2^{m-k-2}$, due to~\cite[Theorem 10]{Ust}, holds.
Hence, it follows that
$$
\hrk(\mathcal{Z}_{\Delta_{m}^k}/S^1_{D})\geq (k+2)-1+2^{m-k-2}+k(2^{m-k-2}-1)+(2^{m-k-2}-1)=(k+2)2^{m-k-2},
$$
takes place. The proof is complete.
\end{proof}

The number $s(P)=s(\partial P^*)$ is called the Buchstaber number of a simple polytope $P$.

\begin{problem}
\begin{itemize}
\item[(a)] Does there exist a simplicial complex $K$ on the vertex set $[m]$ and a toric subgroup $H\subseteq T^m$ of rank $r, 1\leq r\leq s(K)$ acting freely on $\zk$ such that the partial quotient $\zk/H$ is not formal?
\item[(b)] Does there exist a simple polytope $P$ with $m$ facets and a toric subgroup $H\subseteq T^m$ of rank $r, 1\leq r\leq s(P)$ acting freely on $\zp$ such that the partial quotient $\zp/H$ is not formal?
\end{itemize}
\end{problem}

If $f_0(K)=m$ and $\dim K=n-1$, then the maximal rank of a toric subgroup in $T^m$ acting almost freely on $\zk$ equals $m-n$ by~\cite[Lemma 8]{Ust} and~\cite[\S7.1]{da-ja-91}. 

\begin{lemma}\label{lemmatoralrank}
In the above notation, the maximal rank of a toric subgroup in $T^m$ acting almost freely on $\zk/S^1_D$ is less or equal to $m-n-1$.
\end{lemma}
\begin{proof}
Obviously, the stabilizer of a point $x\in (D^2,S^1)^I\subseteq\zk$ with respect to the $T^m$-action is equal to $T^I$, where $I\in K$. 
Therefore, the stabilizer of a point $x\in (D^2,S^1)^I/H\subseteq \zk$ with respect to the $T^m$-action is the subgroup $H(I)$ generated by $T^I$ and $S^1_D$ in $T^m$. The subgroup $H(I)$ does not depend on the choice of the point $x\in (D^2,S^1)^I$.

A toric subgroup $T^r\subseteq T^m$ acts almost freely on $\zk/S^1_D$ if and only if the intersection $T^r\cap H(I)$ is finite for all $I\in K$. In this case we have:
$$
\rk(T^r)+\rk(H(I))\leq\rk(T^m).
$$

The intersection $T^I\cap S^1_D$ is trivial by the condition on the almost free action. Hence, the group
$H(I)$ is the inner direct product of the subgroups $T^I$ and $S^1_D$ in $T^m$. Therefore, one has $\rk(H(I))=n+1$ for any $\dim I=n-1$. It follows that
$$
r=\rk(T^r)\leq \rk(T^m)-\rk(H(I))=m-n-1,
$$
holds. The proof is complete.
\end{proof}

As it was already mentioned above, here we consider only (almost) free actions of toric subgroups in $T^m$ on $\zk$ and on its partial quotients. The weaker version of the Halperin's conjecture for moment-angle complexes was proved in~\cite{Ust}. We show that the similar statement holds for partial quotients of moment-angle complexes by the diagonal circle action.

\begin{theorem}\label{thm:hrkatr}
For any simplicial complex $K$ of dimension $n-1$ on $m$ vertices, one has the inequality
$$
\hrk(X)\geq 2^{\mathrm{atr}(X)},
$$
where $X=\zk/S^1_D$ and $\mathrm{atr}(X)$ denotes the maximal rank of a toric subgroup in $T^m$ acting almost freely on $X$.
\end{theorem}
\begin{proof}
Consider the principal $S^1$-bundle $S^1_D\to\zk\to X$. The space $X$ is simply connected which follows from the homotopy exact sequence of this bundle, because the moment-angle complex $\zk$ is $2$-connected. 

The inequality $\hrk(\zk)\leq\hrk(X)\cdot\hrk(S^1_D)=2\hrk(X)$ follows directly from the Serre spectral sequence of this bundle. Hence, $\hrk(X)\geq\hrk(\zk)/2\geq 2^{m-n-1}$ holds, where the last inequality takes place by~\cite[Theorem 10]{Ust}. 

On the other hand, one has the inequality $\mathrm{atr}(X)\leq m-n-1$ by Lemma~\ref{lemmatoralrank}. This finishes the proof.
\end{proof}

We are grateful to Anton Ayzenberg for drawing our attention to the fact that the original version of the Conjecture~\ref{halperinconj} remains open for both moment-angle complexes and more general partial quotients.

\begin{problem}
Prove Conjecture~\ref{halperinconj} for all partial quotients of moment-angle complexes, or find a counterexample in this class of spaces.
\end{problem}

In what follows we discuss torsion in the integral cohomology of partial quotients.

\begin{theorem}\label{thm:torsion}
Let $G$ be a finitely generated abelian group. Then there exists a simple polytope $P\subseteq\R^n$ with $m$ facets and a toric subgroup of rank one (a circle) $H\subseteq T^m$ such that $H$ acts freely on the moment-angle manifold $\zp$ and $H^*(\zp/H)$ contains the group $G$ as its direct summand. 
\end{theorem}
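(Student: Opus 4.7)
The plan is to locate a finitely generated abelian group $G$ as a direct summand of $H^{*}(\zp/H)$ by (a) realizing $G$ cohomologically in a small simplicial complex $L$, (b) embedding $L$ as a \emph{full} subcomplex of the boundary of a simple polytope, and (c) extracting the summand via the Hochster type formula for partial quotients of~\cite{li-19} that was already cited in the introduction.

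\textbf{Step 1 (cohomological model of $G$).} Writing $G\cong\Z^{r}\oplus\bigoplus_{i=1}^{s}\Z/n_i\Z$, I first choose a finite simplicial complex $L$ on some vertex set $[k]$ and a single degree $q\geq 2$ with $\tilde H^{q}(L;\Z)$ containing $G$ as a direct summand. A wedge of standardly triangulated $q$-spheres together with combinatorial Moore complexes $M(\Z/n_i,q)$ suffices and can be taken with a small, explicit vertex set.

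\textbf{Step 2 (embedding into a polytopal sphere).} Next, I realize $L$ as a full subcomplex of the boundary $\partial P^*$ of some simple polytope $P\subseteq\R^n$ with $m\geq k$ facets. This uses the known construction that any finite simplicial complex embeds as a full subcomplex of a polytopal sphere: one extends $L$ to a triangulated PL ball by adding new vertices without creating any new simplex spanned by vertices of $L$, closes the ball up to a PL sphere by stellar subdivisions, and invokes a Steinitz/Schlegel-type realization to obtain a simplicial polytope; its polar is the desired simple polytope $P$.

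\textbf{Step 3 (free circle action).} Since every simple polytope has Buchstaber number $s(P)\geq 1$, there is a primitive integral vector $\lambda\in\Z^{m}$ such that the circle subgroup $H=S^{1}_{\lambda}\subseteq T^{m}$ acts freely on $\zp$ (equivalently, $\lambda$ projects to a primitive vector under every coordinate projection $\Z^{m}\to\Z^{I_v}$ indexed by a vertex $v$ of $P$). Pick any such $\lambda$.

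\textbf{Step 4 (Hochster type summand).} Applying the Hochster type formula for partial quotients from~\cite{li-19}, one obtains a decomposition of $H^{*}(\zp/H;\Z)$ into summands indexed by $J\subseteq[m]$, each being (up to a shift) the reduced integral cohomology of the full subcomplex $(\partial P^{*})_{J}$. Taking $J=[k]$ yields the summand $\tilde H^{*}(L;\Z)$, which contains $G$ by Step~1; this gives the required direct summand of $H^{*}(\zp/H)$.

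\textbf{Main obstacle.} The delicate step is Step~2: making sure that the ambient sphere is actually the boundary of a \emph{simple} polytope (not merely a PL sphere), and simultaneously that no new simplex spanned by the vertices of $L$ is introduced during the sphere completion, so that $L$ remains a \emph{full} subcomplex. Both conditions have to be controlled in tandem. Once that embedding is in place, the remaining pieces are routine: Step~1 is a standard Moore-space construction, Step~3 is a folklore fact on Buchstaber numbers, and Step~4 is a direct application of the Hochster type formula of~\cite{li-19}.
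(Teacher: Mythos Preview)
Your overall strategy matches the paper's, but Steps~3 and~4 do not work as stated and this is a genuine gap. The Hochster type formula of~\cite{li-19} is \emph{not} a formula for arbitrary partial quotients: it applies only to the class $X(K,\lambda_\alpha)$ coming from a \emph{partition map} $\lambda_\alpha\colon[m]\to\Z^{m'}$, and its summands are indexed by subsets $J$ of the codomain $[m']$, the relevant complex being $K_{\alpha,J}$, the full subcomplex of $K$ on $\alpha^{-1}(J)$. So you may not ``pick any $\lambda$'' witnessing $s(P)\geq 1$ in Step~3 and then invoke~\cite{li-19} in Step~4; a generic free circle need not be of partition type, and even for a partition-type circle not every full subcomplex $(\partial P^{*})_{J}$, $J\subseteq[m]$, occurs as a summand---only those with $J=\alpha^{-1}(J')$ for some $J'\subseteq[m']$ do. To make your argument go through you must choose the circle and the embedding \emph{together}: a partition $\alpha$ with a single $2$-element class $\{i,j\}$ consisting of non-adjacent vertices of $\partial P^{*}$ gives a free $S^1$-action in Li's class, and then your complex $L$ appears as $K_{\alpha,J}$ precisely when its vertex set is a union of $\alpha$-classes.

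This is exactly how the paper proceeds, and it also replaces your Step~2 by a much lighter construction. Rather than completing $L$ to a PL sphere and invoking a Steinitz/Schlegel realization, the paper takes a triangulated Moore space $K$, performs one stellar subdivision to guarantee a pair $\{i,j\}$ of non-adjacent vertices in the resulting $K'$ on $[m]$, and then applies the face-cutting construction of~\cite{bo-me-06}: starting from $\Delta_{[m]}$ and truncating along each minimal non-face of $K'$ produces a simple polytope $P$ whose nerve $\tilde K=\partial P^{*}$ is a polytopal sphere on $M=m+|MF(K')|$ vertices with $MF(K')\subseteq MF(\tilde K)$, so $K'$ sits in $\tilde K$ as a full subcomplex automatically. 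Taking $\alpha$ to be the partition of $[M]$ whose only non-singleton block is $\{i,j\}$ then yields a free circle $H_{\lambda_\alpha}\subset T^{M}$ in Li's class with $\tilde K_{\alpha,[m-1]}=K'$, and~\cite[Theorem~1.2]{li-19} exhibits $\tilde H^{*}(K')\supseteq G$ as a direct summand of $H^{*}(\zp/H_{\lambda_\alpha})$. Your ``main obstacle'' (polytopality plus fullness in Step~2) thus dissolves, while the real obstacle---matching the circle to Li's hypotheses---is the point you glossed over.
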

\begin{proof}
Consider the Moore space $X$ for $G$, that is, $H^{p}(X)\cong G$ and $\tilde{H}^{i}(X)=0$ holds for all $i\neq p$, given a certain $p\geq 1$. Take its arbitrary finite triangulation $K$ and let $K^\prime$ be obtained from $K$ by a stellar subdivision in a maximal simplex of $K$. Then there exists a pair of distinct vertices, $i$ and $j$, in the vertex set $[m]$ of the complex $K^\prime$, not linked by an edge. 

Following~\cite{bo-me-06}, consider the full simplex $\Delta_{[m]}$ on the vertex set $[m]$ and let us cut off its faces, one by one, corresponding to the minimal non-faces of the complex $K^\prime$. Then the nerve complex $\tilde{K}$ of the resulting simple polytope $P$ will be a polytopal sphere of dimension $m-2$ with $M:=m+|MF(K^\prime)|$ vertices, where $MF(K^\prime)$ denotes the set of minimal non-faces of the complex $K^\prime$. Moreover, there still exists no edge connecting the vertices $i$ and $j$ of $\tilde{K}$, because $\{i,j\}\in MF(K^\prime)\subseteq MF(\tilde{K})$ holds. 

Now, consider the partition $\alpha$ of the set $[M]$ into $M-1$ classes $\alpha_1,\ldots,\alpha_{M-1}$, where $\{i,j\}$ is the unique class consisting of $2$ elements. Following the notation from~\cite{li-19}, one has the mapping $\lambda_{\alpha}\colon [M]\to\Z^{M-1}$, which sends a vertex from the class $\alpha_k$ to $\tilde{e}_k$, where $\{\tilde{e}_1,\ldots,\tilde{e}_{M-1}\}$ denotes the basis of the lattice $\Z^{M-1}$. Due to~\cite{li-19}, we observe that the mapping $\lambda_\alpha$ gives rise to the toric subgroup $H_{\lambda_\alpha}\subset T^{M}$, acting freely on the moment-angle manifold $\zp$. 

Therefore, the space $X(\tilde{K},\lambda_\alpha)\cong\zp/H$ will be the corresponding partial quotient. By our construction, it will be a smooth compact simply connected manifold of dimension $\dim\zp-1=2m+|MF(K^\prime)|-2$, and the equality $\tilde{K}_{\alpha,[m-1]}=K^\prime$ holds. Hence, by~\cite[Theorem~1.2]{li-19}, $H^{q}(\zp/H)$ contains $G$ as its direct summand, when $q=p+m$. This finishes the proof.
\end{proof}

\begin{remark}
Let $H^*(\zk)$ be torsion free. Then the integral cohomology rings of the partial quotients are also torsion free for the moment-angle complexes $\zk$ from the class introduced in~\cite{li-19}: this follows immediately from~\cite[Theorem 1.2]{li-19}.
\end{remark}

\begin{example}
Let $G=\Z/2\Z$. Take $X$ to be $\R P^2$. Consider its minimal triangulation on $6$ vertices: $K=\R P^2_6$. It is well-known that the set of the minimal non-faces $MF(K)$ consists of ten $3$-element sets. Hence, the $2$-dimensional simplicial complex $K^\prime$ has $m=7$ vertices and the $5$-dimensional polytopal sphere $\tilde{K}$ has $M=21$ vertices. Therefore, $\dim \zp/H=26$ holds and its integral cohomology contains $2$-torsion in degree $q=9$.
\end{example}

Note that in general the orbit space $\zp/T^r$ has torsion in integral homology provided that the $T^r$-action is not free. See~\cite[Example 2.4]{fi-92}.

\begin{problem}
\begin{itemize}
\item[(a)] Does there exist a simple polytope $P$ such that $H^*(\zp;\Z)$ has torsion and the groups $H^*(\zp/T^r;\Z)$ are free for any freely acting toric subgroup $T^r\subset T^m$, $1\leq r\leq s(P)$?
\item[(b)] Does there exist a simplicial complex $K$ such that $H^*(\zk;\Z)$ has torsion and the groups $H^*(\zk/T^r;\Z)$ are free for any freely acting toric subgroup $T^r\subset T^m$, $1\leq r\leq s(K)$?
\end{itemize}
\end{problem}

\begin{problem}
\begin{itemize}
\item[(a)] Does there exist a simple polytope $P$ such that $H^*(\zp;\Z)$ is a free group and $H^*(\zp/T^r;\Z)$ has torsion for a certain freely acting toric subgroup $T^r\subset T^m$, $1\leq r\leq s(P)$?
\item[(b)] Does there exist a simplicial complex $K$ such that $H^*(\zk;\Z)$ is a free group and $H^*(\zk/T^r;\Z)$ has torsion for a certain freely acting toric subgroup $T^r\subset T^m$, $1\leq r\leq s(K)$?
\end{itemize}
\end{problem}


{\emph{Acknowledgements.}} We are grateful to Anton Ayzenberg, Victor Buchstaber, Fyodor Pavutnitsky and Taras Panov for a number of fruitful discussions. The authors are also obliged to Anton Ayzenberg for attracting their attention to the work~\cite{li-19} related to the problem of the existence of nontrivial torsion in integral cohomology of partial quotients. The authors wish to thank Anthony Bahri and Matthias Franz for their interest in this work and for valuable comments. Finally, we are deeply grateful to the anonymous referee for the proposed generalization of the weak version of the Halperin's conjecture from the case when $K$ is a skeleton of a simplex to the case of an arbitrary simplicial complex $K$ (Theorem ~\ref{thm:hrkatr}), for pointing out further possible directions of research related to the Theorem ~\ref{thm:torsion}, as well as for the numerous useful comments that contributed to the improvement of the paper.

The article was prepared within the framework of the Basic Research Program at HSE University, RF. The first author is also a Young Russian Mathematics award winner and would like to thank its sponsors and jury.


\begin{bibdiv}
\begin{biblist}[\resetbiblist{99}]
\bibselect{biblio_eng}
\end{biblist}
\end{bibdiv}

\end{document}